\numberwithin{equation}{section}
\newcommand*{\circled}[1]{\lower.7ex\hbox{\tikz\draw (0pt, 0pt)%
    circle (.5em) node {\makebox[1em][c]{\small #1}};}}
\newcommand{\al}{\alpha}
\newcommand{\ga}{\gamma}
\newcommand{\Ga}{\Gamma}
\newcommand{\ve}{\varepsilon}
\newcommand{\vp}{\varphi}
\newcommand{\R}{\mathbb{R}}
\newcommand{\N}{\mathbb{N}}
\newcommand{\Z}{\mathbb{Z}}
\newcommand{\T}{\mathbb{T}}
\newcommand{\f}{\forall}
\newcommand{\ccc}{\cdot\cdot\cdot}
\newcommand{\n}[1]{\Vert #1\Vert }
\newcommand{\bbn}[1]{\Big\Vert #1 \Big \Vert }
\newcommand{\lr}[1]{\left\{ #1\right\} }
\newcommand{\lrc}[1]{\left[ #1\right] }
\newcommand{\lrs}[1]{\left( #1\right) }
\newcommand{\lra}[1]{\langle #1\rangle}
\newcommand{\wt}[1]{\widetilde{#1} }
\newcommand{\pa}{\partial}
\newcommand{\cf}{{\mathcal F}}
\begin{document}
\newtheorem{theorem}{Theorem}[section]
\newtheorem{lemma}[theorem]{Lemma}
\theoremstyle{definition}
\newtheorem{definition}[theorem]{Definition}
\newtheorem{example}[theorem]{Example}
\newtheorem{remark}[theorem]{Remark}

\numberwithin{equation}{section}

\newtheorem{proposition}[theorem]{Proposition}
\newtheorem{corollary}[theorem]{Corollary}
\newtheorem{goal}[theorem]{Goal}
\newtheorem{algorithm}{Algorithm}

\renewcommand{\figurename}{Fig.}

\title[$l^{2}$-decoupling and the Boltzmann equation]{$l^{2}$-decoupling and the unconditional uniqueness for the Boltzmann equation}

\author[X. Chen]{Xuwen Chen}
\address{Department of Mathematics, University of Rochester, Rochester, NY 14627, USA}
\email{xuwenmath@gmail.com}

\author[S. Shen]{Shunlin Shen}
\address{School of Mathematical Sciences, University of Science and Technology of China, Hefei, 230026, China}
\email{slshen@ustc.edu.cn}

\author[Z. Zhang]{Zhifei Zhang}
\address{School of Mathematical Sciences, Peking University, Beijing, 100871, China}

\email{zfzhang@math.pku.edu.cn}

\subjclass[2010]{Primary 76P05, 35Q20, 35A02; Secondary 11L03,52C35,82C40.}

\dedicatory{}

\begin{abstract}
We broaden the application of the $l^{2}$-decoupling theorem to the Boltzmann equation. We prove  Strichartz estimates for the linear problem in the $\mathbb{T}^d$ setting. We establish space-time bilinear estimates, and hence the unconditional uniqueness of solutions to the $\mathbb{R}^d$ and $\mathbb{T}^d$ Boltzmann equation for the Maxwellian particle and soft potential with an angular cutoff, adopting a unified hierarchy scheme originally developed for the nonlinear Schr\"{o}dinger equation.

 \end{abstract}
\keywords{Unconditional Uniqueness, Boltzmann Hierarchy, Klainerman-Machedon Board
Game, $l^{2}$-decoupling}
\maketitle
\tableofcontents

\section{Introduction}
The Boltzmann equation in statistical mechanics describes the time-evolution of the distribution function of particles in a thermodynamic system away from equilibrium in the mesoscopic regime. It is a nonlinear integro-differential equation
\begin{equation}\label{equ:Boltzmann}
\left\{
\begin{aligned}
\left( \partial_t + v \cdot \nabla_x \right) f (t,x,v) =& Q(f,f),\\
f(0,x,v)=& f_{0}(x,v),
\end{aligned}
\right.
\end{equation}
where $f(t,x,v)$ denotes the distribution function of particles at time
 $t\geq 0$, located at position $x\in \R^{d}$, and with velocity $v\in \R^{d}$.
 The collision operator $Q(f,g)$, which describes the effect of binary collisions between particles, is conventionally
decomposed into a gain term
and a loss term
\begin{equation*}
Q(f,g)=Q^{+}(f,g)-Q^{-}(f,g)
\end{equation*}
where the gain term is
\begin{align}
Q^{+}(f,g)=&\int_{\mathbb{R}^{d}}\int_{\mathbb{S}^{d-1}} f(v^{\ast })g(u^{\ast}) B(u-v,\omega)dud\omega,
\end{align}
and the loss term is
\begin{align}
Q^{-}(f,g)=& \int_{\mathbb{R}^{d}}\int_{\mathbb{S}^{d-1}} f(v)g(u) B(u-v,\omega) du d\omega,
\end{align}
with the pre-collision and post-collision velocities related by
\begin{align*}
u^{*}=u+\omega\cdot (v-u) \omega,\quad v^{*}=v-\omega\cdot(v-u)\omega.
\end{align*}

The Boltzmann collision kernel function $B(u-v,\omega)$ is a non-negative function which depends solely on the relative velocity $|u-v|$ and the deviation angle $\theta$, with $\cos \theta:=\frac{u-v}{|u-v|}\cdot \omega$.
We consider the kernel function in the form
\begin{align}\label{equ:kernel function}
B(u-v,\omega)=|u-v|^{\ga}\textbf{b}(\cos \theta)
\end{align}
 under the Grad's angular cutoff assumption
\begin{align*}
0\leq  \textbf{b}(\cos \theta)\leq C|\cos\theta|.
\end{align*}
The collision kernel \eqref{equ:kernel function} originates from the physical model of inverse-power law potentials and
 the different ranges $\ga<0$, $\ga=0$, $\ga>0$ correspond to soft potentials, Maxwellian molecules, and hard potentials, respectively. See also \cite{cercignani1988boltzmann,cercignani1994mathematical,villani2002review} for a more detailed physics background.

In the paper, we consider the Cauchy problem of \eqref{equ:Boltzmann}, particularly the unconditional uniqueness of solutions in the low regularity setting. The well-posedness problem is one of the fundamental mathematical problems in kinetic theory, as it is of vital importance for both the physical interpretation and practical application of the theory.
 In many interesting physical situations, such as the study of rarefied gases, the initial data may have highly irregular initial particle distributions and thus are of low regularity. Furthermore, low regularity well-posedness theories, away from mathematical interest, are important for numerical simulations. In practical applications, numerical methods often need to handle non-smooth and low regularity data in each iteration, and requires a persistence of regularity of the solution map to ensure the validity of the algorithm. Understanding the behavior of these solutions at low regularity is not only vital for achieving accurate modeling and prediction, but also plays a crucial role in the development of more robust and precise numerical algorithms.

To date, a large quantity of mathematical theories and diverse methods have been developed for constructing solutions to \eqref{equ:Boltzmann} in various settings. See for example \cite{alexandre2013local,arsenio2011global,chen2019local,chen2019moments,chen2021small,CH24well,CSZ24well,diperna1989cauchy,
duan2016global,duan2018solution,guo2003classical,guo2003the,he2023cauchy,illner84the,kaniel78the,
alexandre2011the,alexandre2011thehard,alexandre2012the,alexandre2011global,chaturvedi2021stability,
duan2021global,gressman2011global,gressman2011sharp,imbert2022global}.
There have been many advancements of well-posedness theories on least regularity as possible on the initial data.
At the same time, it is highly nontrivial to find the critical regularity of initial data for well-posedness. On the one hand,
 it is sometimes believed at $s=\frac{d}{2}$, the continuity threshold, based on the failure of the critical embedding $H^{\frac{d}{2}}\hookrightarrow L^{\infty}$. See for example \cite{alexandre2013local,duan2016global,duan2021global,duan2018solution} for a more discussion. On the other hand, from the perspective of scaling analysis, the well/ill-posedness threshold in $H^{s}$ Sobolev space seems to be $s=\frac{d-2}{2}$, because \eqref{equ:Boltzmann} in $\R^{d}\times \R^{d}$ is scale-invariant at this regularity level.
This aspect studies solutions that may not be in $L^{\infty}$, and thus new ideas are needed.

In a series of work \cite{chen2019local,chen2019moments,chen2021small}, T. Chen, Denlinger, and Pavlovi$\acute{\text{c}}$ have provided a novel approach to establish the well-posedness of \eqref{equ:Boltzmann}, by adopting dispersive techniques from the study of the quantum many-body hierarchy dynamics, especially space-time collapsing/multi-linear estimates techniques (see for instance \cite{chen2010energy,chen2015unconditional,
chen2014derivation,chen2013rigorous,chen2016focusing,chen2016collapsing,chen2016klainerman,chen2016correlation,
chen2019derivation,chen2022quantitative,chen2023derivation,herr2016gross,herr2019unconditional,
kirkpatrick2011derivation,KM08,sohinger2015rigorous}).
 They have reduced the regularity for the well-posedness of \eqref{equ:Boltzmann} to $s>\frac{d-1}{2}$ for both Maxwellian molecules and hard potentials with cutoff, and also indicated the potential for a systematic study of \eqref{equ:Boltzmann} utilizing dispersive tools.
For the 3D constant kernel case, X. Chen and Holmer \cite{CH24well} identified that the threshold for well-posedness and ill-posedness in the $H^s$ Sobolev space is in fact at the regularity level $s=\frac{d-1}{2}$, which is between the continuous threshold $s=\frac{d}{2}$ and the scaling-invariant index $s=\frac{d-2}{2}$.
Building upon this, in our subsequent work \cite{CSZ24well}, we moved forward from the special constant kernel case, and proved that the well/ill-posedness threshold was also $s=\frac{d-1}{2}$ for the general
kernel with soft potentials. Furthermore, we established in \cite{chen2023sharp} a sharp 3D global well-posedness for both Maxwellian molecules and soft potential case. (Here, "sharp" means that the smallness is taken in the scaling-invariant space with $s=\frac{d-2}{2}$ and the regularity is just above the well/ill-posedness threshold.)

With the discovery of a sense of optimal regularity for well-posedness, numerous intriguing and important problems have arisen. One such problem is the uniqueness of solutions to \eqref{equ:Boltzmann}, particularly in such low regularity settings that are below the continuity threshold. Beyond its physical relevance and implications for numerical simulations,
the uniqueness problem in this contexts is not merely of mathematical interest, but also represents an actual necessity for addressing many related problems. For instance, it is crucial for the derivation of the Boltzmann equation from classical particle systems or quantum many-body dynamics \cite{chen2023derivationboltzmann}, as well as for understanding its hydrodynamic limit to fluid equations (for which the uniqueness has attracted much attention recently) and numerous other applications. Undoubtedly, characterizing the optimal regularity conditions that determine uniqueness or non-uniqueness presents a significant challenge.

 In this paper, we employ dispersive techniques and a hierarchy scheme from quantum many-body dynamics, to tackle the problem of unconditional uniqueness for \eqref{equ:Boltzmann} with the spatial domain being $\R^{d}$ and $\T^{d}$. In many applications of \eqref{equ:Boltzmann}, a periodic spatial domain is physically meaningful, as it sort of corresponds to the thermodynamic limit of particle systems and also provides a structured framework for modeling complex systems. However, from the perspective of dispersive equations, the analysis of low regularity problems on the periodic domain is much more delicate, akin to the case of nonlinear Schr\"{o}dinger equations on $\T^{d}$. In fact, to address this problem on $\T^{d}$, the $l^{2}$-decoupling theorem plays a crucial role.
Here is the main theorem.

\begin{theorem}\label{thm:main theorem}
Let $d\geq 2$, $p_{0}=\frac{2(d+2)}{d}$, $r>\frac{d}{2}+\ga$, and
\begin{equation}\label{equ:index,thm}
\left\{
\begin{aligned}
&s>\frac{d-1}{2}, \quad \ga\in [\frac{1-d}{2},0],\quad &\text{on $\R^{d}\times \R^{d}$},\\
&s>\frac{d}{2}-\frac{1}{p_{0}},\quad \ga\in [-\frac{d}{p_{0}},0] ,\quad& \text{on $\T^{d}\times \R^{d}$.}
\end{aligned}
\right.
\end{equation}
There is at most one $C([0,T];H_{x}^{s}L_{v}^{2,r})$ solution to the Boltzmann equation \eqref{equ:Boltzmann}. Here, the weighted Sobolev norm $H_{x}^{s}L_{v}^{2,r}$ is given by
\begin{align}
\n{f}_{H_{x}^{s}L_{v}^{2,r}}:=\n{\lra{\nabla_{x}}^{s}\lra{v}^{r}f}_{L_{x,v}^{2}}.
\end{align}
\end{theorem}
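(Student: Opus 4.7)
The plan is to follow the hierarchy framework imported from quantum many-body dynamics. Let $f$ and $g$ be two solutions in $C([0,T];H_{x}^{s}L_{v}^{2,r})$ with the same initial data $f_{0}$. Tensorizing, the sequences $\{f^{\otimes k}\}_{k}$ and $\{g^{\otimes k}\}_{k}$ both solve the same linear Boltzmann hierarchy with initial data $\{f_{0}^{\otimes k}\}_{k}$, so the uniqueness statement reduces to showing that the zero sequence is the only solution of the hierarchy with zero initial data in a class large enough to contain both lifts. The point of this reformulation is that the resulting linear problem can be attacked by dispersive tools.

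To exploit this linearity, one iterates the Duhamel formula $K$ times on each marginal $h_{k}=f^{\otimes k}-g^{\otimes k}$. This produces a sum of $(K-1)!$ iterated oscillatory integrals in which the free transport group $U(t)$ alternates with collision operators $Q_{j+1,i}$. The Klainerman-Machedon board-game argument then consolidates these $(K-1)!$ terms into at most $C^{K}$ equivalence classes, converting the factorial combinatorial explosion into geometric growth, which is the only hope of summing the expansion.

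The analytic engine of the proof is a space-time bilinear estimate for the collision form composed with free transport, schematically
\begin{equation*}
\bn{\lra{v}^{-r}\,Q\bigl(U(t)f,\,U(t)g\bigr)}_{L_{t}^{p_{0}'}H_{x}^{s}L_{v}^{2}}\lesssim \n{f}_{H_{x}^{s}L_{v}^{2,r}}\,\n{g}_{H_{x}^{s}L_{v}^{2,r}},\qquad p_{0}=\tfrac{2(d+2)}{d},
\end{equation*}
which on $\R^{d}$ follows from the classical dispersive Strichartz inequality for kinetic transport and gives the threshold $s>\frac{d-1}{2}$. On $\T^{d}$ the Strichartz inequality at the critical exponent $p_{0}$ fails at scaling, and it is precisely the $l^{2}$-decoupling theorem of Bourgain-Demeter that restores a Strichartz estimate with an arbitrarily small derivative loss. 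That loss is absorbed by the excess regularity $s>\frac{d}{2}-\frac{1}{p_{0}}$ dictated by the statement, while the velocity weight $\lra{v}^{r}$ with $r>\frac{d}{2}+\ga$ dominates the $|u-v|^{\ga}$ factor in the kernel, closing the bilinear bound uniformly in $\ga\in[-d/p_{0},0]$.

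Combining the board-game reduction with the bilinear estimate at every node yields a uniform bound of the type $C^{K}T^{\alpha K}$ on each regrouped term, so summing in $K$ and taking $T$ small forces $h_{k}\equiv 0$ on a short interval; a standard continuity-in-time bootstrap then extends this to the whole $[0,T]$ and identifies the two solutions. The main obstacle I anticipate is the $\T^{d}$ bilinear estimate: unlike on $\R^{d}$ there is no dispersive decay to exploit, and the decoupling bound must be deployed in a genuinely bilinear fashion compatible with the two distinct input frequencies entering $Q^{+}$. Balancing the $\ve$-derivative loss inherent to decoupling against the sharp regularity budget $s>\frac{d}{2}-\frac{1}{p_{0}}$, while tracking the velocity weights and the $|u-v|^{\ga}$ singularity through each collision in the allowed soft-potential range, is the most delicate ingredient of the scheme.
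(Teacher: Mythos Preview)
Your overall strategy matches the paper's: lift to the hierarchy, iterate Duhamel, apply the Klainerman--Machedon board game, and close with space-time bilinear estimates driven by Strichartz (classical on $\R^d$, $l^2$-decoupling on $\T^d$). The identification of the regularity thresholds and of the role of decoupling on $\T^d$ is also correct.

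There is, however, a genuine gap at the innermost node of the iteration. After $K$ Duhamel steps the terminal input is $\wt f^{(K+1)}(t_{K+1})$, which via the signed measure $\nu_{t_{K+1}}=\delta_{\wt f_1(t_{K+1})}-\delta_{\wt f_2(t_{K+1})}$ reduces to $\wt f_i(t_{K+1})^{\otimes(K+1)}$ for a \emph{nonlinear} solution evaluated at $t_{K+1}$. The deepest collision $\wt Q\bigl(\wt f_i(t_{K+1}),\wt f_i(t_{K+1})\bigr)$ is therefore \emph{not} of the form $\wt Q(U(t)\phi,U(t)\psi)$ with $\phi,\psi$ independent of the integration variable, so your space-time bilinear estimate simply does not apply at that node. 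A time-independent $H_x^s\to H_x^s$ bound on $\wt Q$ would rescue this, but that needs $s>d/2$, which lies above the claimed threshold.

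The paper's remedy is twofold: (i) a \emph{time-independent} estimate (their Lemma~\ref{lemma:rough term}) that bounds the deepest node only in $L_x^2 H_\xi^r$, deliberately losing the $s$ derivatives; and (ii) an \emph{asymmetric} version of the bilinear estimate,
\[
\bn{\wt Q(U(t)\wt f_0,U(t)\wt g_0)}_{L_t^1 H_x^{s_1}H_\xi^r}\lesssim T^{1/2}\,\n{\wt f_0}_{H_x^{s_1}H_\xi^r}\,\n{\wt g_0}_{H_x^{s}H_\xi^r},\qquad s_1\in[0,s],
\]
together with its companion with the roles of $\wt f_0,\wt g_0$ swapped (their Proposition~\ref{lemma:bilinear estimates,loss,gain}). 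These carry the $L_x^2$ regularity of the deepest node up to the root; a Duhamel-tree bookkeeping (their Section~\ref{sec:Duhamel Expansion and Duhamel Tree}) records, at each node, which branch contains $D^{(K+1)}$ and hence whether to take $s_1=0$ or $s_1=s$. Your symmetric estimate is only the case $s_1=s$ and by itself will not close the iteration below $s=d/2$; the asymmetric estimates and the tree-tracking are the missing ingredients in your plan.
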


For the $\R^{d}\times \R^{d}$ case, Theorem \ref{thm:main theorem} provides a sharp uniqueness result for the Boltzmann equation in the sense that it matches the optimal regularity for the local well-posedness theories as in \cite{CH24well,CSZ24well} in which the ill-posedness result is proved.
For the $\T^{d}\times \R^{d}$ case, the result should represent the best achievable by our method. This is primarily due to the fact that the Strichartz estimates on $\T^{d}\times \R^{d}$ are significantly weaker than those on $\R^{d}\times \R^{d}$. Typically, the $L_{t}^{2}$ scale-invariant endpoint Strichartz estimate does not hold in the periodic domain. Instead, a weaker scale-invariant $L_{t}^{p}$ with a frequency cutoff only holds for $p\geq p_{0}$ where $p_{0}=\frac{2(d+2)}{2}$ is the critical point for the symmetric hyperbolic Schr\"{o}dinger in \eqref{equ:hyperbolic equation linear}.
This difference in the strength of Strichartz estimates accounts for the disparity in results \eqref{equ:index,thm} between
 $\R^{d}$ and $\T^{d}$.

To elaborate,
we begin by establishing the connection between the analysis of the Boltzmann equation \eqref{equ:Boltzmann} and the theory of dispersive PDEs.
Let $\wt{f}(t,x,\xi)$ be the inverse Fourier transform in the velocity variable, defined as
\begin{align*}
\wt{f}(t,x,\xi)=\mathcal{F}_{v\mapsto \xi}^{-1}(f).
\end{align*}
Then the linear part of \eqref{equ:Boltzmann} is transformed into the symmetric hyperbolic Schr\"{o}dinger equation\footnote{This name might also mean $-\Delta_{x_{1}}+\Delta_{x_{2}}$ with $x_{1}$ and $x_{2}$ in the same spatial domain, in contrast with the elliptic Laplace operator $-\Delta_{x_{1}}-\Delta_{x_{2}}$.}
\begin{align}\label{equ:hyperbolic equation linear}
i\pa_{t}\wt{f}+\nabla_{\xi}\cdot \nabla_{x}\wt{f}=0,
\end{align}
which, based on the endpoint Strichartz estimates in \cite[Theorem 1.2]{keel1998endpoint} for the $\R^{d}\times \R^{d}$ case, enables the application of Strichartz estimates that
\begin{align}\label{equ:strichartz,R,thm}
\n{e^{it\nabla_{\xi}\cdot \nabla_{x}}\wt{f}_{0}}_{L_{t}^{q}L_{x,\xi}^{p}(\R\times \R^{d}\times \R^{d})}\lesssim \n{\wt{f}_{0}}_{L_{x,\xi}^{2}(\R^{d}\times \R^{d})},\quad \frac{2}{q}+\frac{2d}{p}=d,\quad q\geq 2,\ d\geq2.
\end{align}
For the $\T^{d}\times \R^{d}$ case, \eqref{equ:strichartz,R,thm} is no longer applicable. Instead, to recover  Strichartz estimates, we heavily rely on the $l^{2}$-decoupling theorem in Bourgain and Demeter \cite{BD15}. The $l^{2}$-decoupling theorem is a groundbreaking result in harmonic analysis, with profound implications across various fields of mathematics. It has led to significant advancements in the understanding of PDEs, number theory, geometric measure theory. In the context, we broaden the application of the $l^{2}$-decoupling theorem to the Boltzmann equation by establishing its connection with Stricharts estimates.
\begin{proposition}[Strichartz estimates with separated $(x,\xi)$ frequencies]
Let $p\geq p_{0}=\frac{2(d+2)}{d}$. We have
\begin{align}\label{equ:strichartz,different frequency,thm}
&\n{P_{\leq N}^{x}P_{\leq M}^{\xi}e^{i t \nabla_{\xi}\cdot \nabla_{x}} \wt{f}_{0}}_{L_{t}^p([0,1]\times \mathbb{T}^d\times \R^{d})}\\
\lesssim_{\ve}&\max\lr{1,MN^{-1}}^{\frac{1}{p}}N^{\frac{d}{2}-\frac{d+1}{p}+\ve}
M^{\frac{d}{2}-\frac{d+1}{p}}\n{P_{\leq N}^{x}P_{\leq M}^{\xi}\wt{f}_{0}}_{L_{x,\xi}^{2}}.\notag
\end{align}
\end{proposition}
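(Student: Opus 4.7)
The plan is to realize
\[
F(t,x,\xi):=P^{x}_{\leq N}P^{\xi}_{\leq M}e^{it\nabla_\xi\cdot\nabla_x}\tilde f_0(x,\xi)
\]
as an extension operator from a non-degenerate quadratic surface and then apply the $l^2$-decoupling theorem. Expanding Fourier coefficients $c(n,v)$ (discrete in $n\in\mathbb{Z}^d$, continuous in $v\in\mathbb{R}^d$) gives
\begin{equation*}
F(t,x,\xi)=\sum_{|n|\lesssim N}\int_{|v|\lesssim M}c(n,v)\,e^{i(n\cdot x+v\cdot\xi-t\,n\cdot v)}\,dv,\qquad \|c\|_{l^2_nL^2_v}\sim\|P^{x}_{\leq N}P^{\xi}_{\leq M}\tilde f_0\|_{L^2_{x,\xi}}.
\end{equation*}
Thus the space-time Fourier transform of $F$ concentrates on the $2d$-dimensional saddle surface $\Sigma=\{\tau=-n\cdot v\}\subset\mathbb{R}^{2d+1}$, with $n$ restricted to $\mathbb{Z}^d\cap B_N$ and $v\in B_M$. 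The quadratic form $(n,v)\mapsto -n\cdot v$ has non-degenerate indefinite Hessian of signature $(d,d)$, so the $l^2$-decoupling theorem of Bourgain-Demeter (in its non-degenerate quadratic version) is available.

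\textbf{Balanced case $M\leq N$.} First I would rescale $(n,v)\to(n/N,v/N)$ and $(x,\xi,t)\to(Nx,N\xi,N^2 t)$ so that the $n$-lattice spacing $1/N$ matches the decoupling cap scale $R^{-1/2}$ with $R=N^2$. Applying $l^2$-decoupling at caps of side $R^{-1/2}$, for $p\geq p_0=\tfrac{2(d+2)}{d}$, yields
\begin{equation*}
\|F\|_{L^p(B_R)}\lesssim_\epsilon R^{\epsilon}\Bigl(\sum_\theta \|F_\theta\|^2_{L^p(B_R)}\Bigr)^{1/2},
\end{equation*}
where each cap $\theta$ contains one $n$-lattice point and a $v$-cube of side $\min(M/N,1)$. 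The corresponding $F_\theta$ is localized in physical space to a tube of dimensions $R^{1/2}\times R^{1/2}\times R$, on which elementary Hausdorff-Young / Bernstein produce a per-cap bound of order $R^{(d+1)/p}R^{-d/2}\|c\|_{L^2(\theta)}$ times the $(M/N)$-dependent Bernstein factor from the $v$-direction. Cauchy-Schwarz summation over $\theta$ and reversal of the scaling then produce the stated $N^{d/2-(d+1)/p+\epsilon}M^{d/2-(d+1)/p}\|f_0\|_{L^2}$ bound.

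\textbf{Regime $M>N$.} Here I would partition $B_M\subset\mathbb{R}^d$ into $\sim(M/N)^d$ cubes of side $N$ centered at $\{v_\ell\}$, decompose $F=\sum_\ell F_\ell$ accordingly, and apply the Galilean shift $x\mapsto x-tv_\ell$ to reduce each piece to the balanced form with $M$ replaced by $N$. Since the $F_\ell$ have almost-disjoint $\xi$-Fourier supports in $v_\ell+B_N$, Littlewood-Paley in $\xi$ together with Minkowski's integral inequality (for $p\geq 2$) gives
\begin{equation*}
\|F\|_{L^p_{t,x,\xi}}\lesssim\Bigl(\sum_\ell \|F_\ell\|^2_{L^p_{t,x,\xi}}\Bigr)^{1/2},
\end{equation*}
and plugging in the balanced bound together with $\sum_\ell \|c\mathbf{1}_{v\in Q_\ell}\|^2_{l^2L^2}\sim\|c\|^2_{l^2L^2}$ produces the extra $\max\{1,M/N\}^{1/p}$ prefactor appearing in the conclusion.

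\textbf{Main obstacle.} The delicate point is the interplay between the lattice $\mathbb{Z}^d$ in the $\mathbb{T}^d$-variable and the continuum in the $\mathbb{R}^d$-variable: the $N^\epsilon$ loss from decoupling must be attached only to $N$ and not to $M$, and the exponents $d/2-(d+1)/p$ have to emerge symmetrically in $N$ and $M$. Choreographing the cap scale, the tube dimensions, and the per-cap Bernstein estimate so that these exponents line up correctly—rather than, say, yielding the smaller saddle-critical exponent $\tfrac{2(d+1)}{d}$ or the purely Bernstein exponent $d/2-d/p$—is what forces the restriction to $p\geq p_0=\tfrac{2(d+2)}{d}$ and constitutes the bulk of the technical work.
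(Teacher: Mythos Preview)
Your overall architecture---view $F$ as an extension from the saddle surface $\tau=-n\cdot v$ and invoke hyperbolic $l^{2}$-decoupling---matches the paper's, and your balanced step is in the right spirit. But the $M>N$ step has a genuine gap.

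You claim that because the pieces $F_\ell$ have (almost) disjoint $\xi$-Fourier supports in congruent cubes $v_\ell+B_N$, ``Littlewood--Paley in $\xi$ together with Minkowski'' yields
\[
\|F\|_{L^{p}_{t,x,\xi}}\ \lesssim\ \Bigl(\sum_\ell \|F_\ell\|_{L^{p}_{t,x,\xi}}^{2}\Bigr)^{1/2}.
\]
For congruent boxes this reverse square-function inequality is \emph{false} when $p>2$; Rubio de Francia gives only the opposite direction. Indeed, if your $l^{2}$ combination held, plugging in the balanced bound for each $F_\ell$ would give
\[
\|F\|_{L^{p}}\ \lesssim\ N^{\,d-\frac{2(d+1)}{p}+\varepsilon}\|c\|_{l^{2}L^{2}},
\]
with no $M$-dependence whatsoever---strictly stronger than the stated estimate, which is sharp up to the $\varepsilon$. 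So the claimed $\max\{1,M/N\}^{1/p}$ prefactor cannot ``emerge'' from this combination; there is nowhere for it to come from. (One can salvage the decomposition by paying the sharp flat-decoupling loss $(M/N)^{d(1/2-1/p)}$ for the $\sim(M/N)^{d}$ boxes, and a computation shows this produces exactly the right exponents---but that is not what you wrote.)

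The paper avoids this issue entirely by a different mechanism: the scaling identity
\[
\delta^{\xi}_{a}\,e^{it\nabla_\xi\cdot\nabla_x}\;=\;e^{\,i(t/a)\nabla_\xi\cdot\nabla_x}\,\delta^{\xi}_{a},
\qquad (\delta^{\xi}_{a}g)(\xi)=g(a\xi),
\]
which trades $\xi$-frequency for time. The argument proceeds in three short steps: (1) prove only the diagonal case $M=N$ directly from decoupling, giving $T^{1/p}N^{\,d-\frac{2d+2}{p}+\varepsilon}$ on $[0,T]$; (2) rescale $\xi$ by $N$ to deduce the case $M=1$ on $[0,T]$, picking up the factor $\max\{1,TN^{-1}\}^{1/p}$ from the altered time interval; (3) rescale $\xi$ by $1/M$ to obtain general $M$ on $[0,1]$, which converts the time interval to $[0,M]$ and yields exactly $\max\{1,MN^{-1}\}^{1/p}$. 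This is both cleaner and makes transparent why the extra prefactor appears: it is a time-interval effect, not a frequency-orthogonality one.
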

The  Strichartz estimates \eqref{equ:strichartz,different frequency,thm} are sharp up to $\ve$-losses. They are not only essential for proving the space-time bilinear estimates required for our proof, but also hold independent interest. They may potentially be applied to a broader range of problems, such as the kinetic cascade problem and the stability analysis around the Maxwell distribution, among others. Also see \cite{bacsakouglu2024local}, in which the $L_{t,x,\xi}^{4}$ Strichartz estimates have been established to achieve the local-wellposedness of the Boltzmann equation with the periodic spatial domain. Notably, although the Strichartz estimates \eqref{equ:strichartz,different frequency,thm} appear similar in form to those for the Schr\"{o}dinger operator
 $e^{it\Delta_{x}+it\Delta_{\xi}}$ on the waveguide $\T^{d}\times \R^{d}$, they are fundamentally different.
 \footnote{The critical point is a key difference between the two operators.  For example, for $d=3$, the critical point for the symmetric hyperbolic Schr\"{o}dinger operator is $p_{0}=\frac{10}{3}$, while for the Schr\"{o}dinger operator $e^{it\Delta_{x}+it\Delta_{\xi}}$, it is $p_{1}=\frac{8}{3}$.} (See Remark \ref{remark:symmetry and scaling} for further discussions.)

In addition to the dispersive property, another characteristic of the Boltzmann equation is its hierarchy structure, which
is a linear system of infinitely many
coupled differential equations for the correlation functions of a rarefied gas
of particles.
 In \cite{ACI91},
Arkeryd, Caprino and Ianiro have suggested using the Hewitt-Savage theorem to prove uniqueness for the Boltzmann hierarchy. Here, we adopt a perfected hierarchy scheme for the nonlinear Schr\"{o}dinger equation (NLS) case in the quantum setting.

The hierarchy approach for NLS, proposed by Spohn \cite{spohn1980kinetic}, aims to derive the NLS from quantum many-body dynamics.
Around 2005, it was Erd{\"o}s, Schlein, and Yau who first rigorously derived the 3D cubic defocusing NLS in their fundamental papers \cite{erdos2006derivation,erdos2007derivation,erdos2009rigorous,erdos2010derivation}
with the unconditional uniqueness of the Gross-Pitaevskii (GP) hierarchy being a key part. Subsequently,
Klainerman and Machedon \cite{KM08}
introduced a different uniqueness theorem, and provided a different combinatorial argument, the now so-called Klainerman-Machedon (KM) board game.
 In 2013, T. Chen, Hainzl, Pavlovi{\'c}, and Seiringer \cite{chen2015unconditional} simplified the proof of the uniqueness theorem in \cite{erdos2007derivation} by applying the quantum de Finetti theorem to the KM board game. Their method, combining the KM board game, quantum de Finetti theorem, and Sobolev multilinear estimates, is robust for such uniqueness problems. The uniqueness analysis of GP hierarchy started to unexpectedly yield new NLS results with regularity lower than the NLS analysis all of a sudden since Herr-Sohinger
  \cite{herr2019unconditional}\footnote{We mention \cite{herr2019unconditional} 1st here. Even though
 \cite{chen2019derivation} was posted on arXiv one month before
 \cite{herr2019unconditional}, X. Chen and Holmer were not
 aware of the unconditional uniqueness implication of \cite{chen2019derivation}
 until \cite{herr2019unconditional}.} and \cite{chen2019derivation,chen2022unconditional,CSZ22} in which the unconditional uniqueness problems for $H^{1}$-critical and $H^{1}$-supercritical cubic and quintic NLS are completely resolved on $\R^{d}$ and $\T^{d}$. (See also \cite{Kis21} using NLS analysis in the scaling-subcritical regime.) Currently, the hierarchy scheme appears to be the only known approach to deal with the uniqueness problem on $\T^{d}$ at the critical regularity.

Through a great deal of efforts, the hierarchy scheme within the quantum framework has emerged as a robust approach. Applying the developed techniques to kinetic equations is a meaningful endeavor.
In the study of kinetic equations, T. Chen, Denlinger, and Pavlovi{\'c}  pioneered the KM board game method in \cite{chen2019local}, demonstrating the local well-posedness of the Boltzmann hierarchy for cutoff Maxwell molecules. More recently, X. Chen and Holmer restructured this KM board combinatorics in \cite{chen2023derivationboltzmann}, integrating the latest techniques from \cite{chen2022unconditional} (the supercritical case is in \cite{CSZ22}), to achieve sharp unconditional uniqueness for the Boltzmann hierarchy with a collision kernel that combines aspects of hard sphere and inverse power potential. (See also \cite{ampatzoglou2024global} for the $L^{\infty}$ setting.)

\vspace{1em}

\noindent \textbf{Outline of the paper.}
We prove Theorem \ref{thm:main theorem} by adopting the recently developed hierarchy scheme for the NLS case in \cite{chen2019derivation,chen2022unconditional,CSZ22}.
 Although the Boltzmann equation and the NLS share similarities in the context of dispersive equations and infinite hierarchies, they are still very different.
 The difficulty primarily arises from the complexity of the Boltzmann collision kernel,
 which, as a key component of the Boltzmann equation, involves a two-fold integral
 and exhibits highly nonlinear, nonlocal, and nonsymmetric properties.

 Within the hierarchy method framework, a pivotal observation is that the iteration part in Section \ref{sec:Iterative Estimates} essentially reduces to symmetric collision estimates \eqref{equ:Q,bilinear estimate,Hs,1}--\eqref{equ:Q,bilinear estimate,Hs,2}.
In general, establishing symmetric estimates for the Boltzmann equation is more delicate than for the NLS (in the low-regularity setting), due to the intrinsic nonsymmetry and the different type of nonlinearity.
The multilinear estimates for both equations are often proved by a case-by-case Littlewood-Paley frequency analysis. However, the nonsymmetry of the Boltzmann collision term introduces additional frequency interaction case that are absent in more symmetric settings, creating extra complications. (See, for example, the distinct treatment required for Case C in the analysis of the loss and gain terms.) To overcome this, we employ a more refined decomposition in the frequency space. We remark that, this nonsymmetry is not merely a technical obstacle. It has in fact played a crucial role in understanding the ill-posedness mechanism of the Boltzmann equation, as demonstrated in \cite{CH24well, CSZ24well}.

In the analysis of the Boltzmann hierarchy, we supply
a Duhamel tree representation, with an algorithm to connect the tree structure to the Duhamel expansion.
Together with the KM board game combinatorics, this diagrammatic representation yields a streamlined iterative scheme in the $L^{2}$ setting. This enables a direct application of the symmetric collision estimates \eqref{equ:Q,bilinear estimate,Hs,1}--\eqref{equ:Q,bilinear estimate,Hs,2} for the nonsymmetric collision operator, thereby providing a unified treatment for both $\R^{d}$ and $\T^{d}$ spatial domains.
In this way, the analysis successfully integrates tools from modern harmonic analysis, specifically decoupling inequalities, dispersive estimates for symmetric hyperbolic Schr\"{o}dinger flows, into the Boltzmann hierarchy scheme.

In Section \ref{sec:Strichartz Estimates}, we focus on the proof of the  Strichartz estimates on $\T^{d}\times \R^{d}$ via the $l^{2}$-decoupling theorem.
It should be noted that,
unlike the elliptic Schr\"{o}dinger operator $e^{it\Delta_{x}+ it\Delta_{\xi}}$ on the waveguide $\T^{d}\times \R^{d}$, in which partial derivatives can be separated, the symmetric hyperbolic Schr\"{o}dinger operator $e^{it\nabla_{\xi}\cdot \nabla_{x}}$ does not allow for such separation, making it more difficult to handle. Indeed, those separations can even yield global-in-time Strichartz estimates for the elliptic Schr\"{o}dinger operator on the waveguide, while it is impossible for the symmetric hyperbolic Schr\"{o}dinger operator. (See Remark \ref{remark:symmetry and scaling}.)
Here, one key observation is that the symmetric hyperbolic Schr\"{o}dinger operator possesses a scaling property given by
$$e^{it\nabla_{\xi}\cdot \nabla_{x}}(\phi(t,x,a\xi))=(e^{iat\nabla_{\xi}\cdot \nabla_{x}}\phi)(t,x,a\xi),$$
which allows us to perform scaling analysis on both the $\xi$ variable and the time variable $t$.
This property is in fact helpful for our handling of this new, non-elliptic, non-waveguide, highly symmetric and hyperbolic case in an unusual way.

In Section \ref{sec:Bilinear Estimates}, we employ dispersive techniques in conjunction with the Strichartz estimates to derive the space-time bilinear estimates for the collision kernel. Though the loss term and the gain term scale the same way, they have totally different structures. Therefore, we divide the proof into Sections \ref{sec:Bilinear Estimates for the Loss Term} and \ref{sec:Bilinear Estimates for the Gain Term}.

 In Section \ref{sec:Uniqueness of the Infinite Boltzmann Hierarchy}, we introduce the Boltzmann hierarchy and adopt the hierarchy scheme along with the KM board game combinatorics method to obtain the uniqueness of the Boltzmann hierarchy and equation. Specifically, in Section \ref{sec:Duhamel Expansion and Duhamel Tree}, we start with an algorithm to generate a Duhamel tree diagram representing the Duhamel expansion. Then in Section \ref{sec:Iterative Estimates}, based on the Duhamel tree diagram, we prove the iterative estimates for the Duhamel expansion. Finally, in Section \ref{sec:Proof of the Main Theorem}, we complete the proof of the main theorem.

\section{Strichartz Estimates via $l^{2}$-decoupling}\label{sec:Strichartz Estimates}

In the section, our main goal is to use the $l^{2}$-decoupling theorem in \cite{BD15} to set up the  Strichartz estimates with separated $(x,\xi)$ frequencies for the symmetric hyperbolic Schr\"{o}dinger operator.
Following \cite{BD15,BD17}, we introduce the truncated hyperbolic paraboloid
\begin{align}
H_{\al}^{n-1}:=\lr{(\eta_{1},...,\eta_{n-1},\al_{1}\eta_{1}^{2}+...+\al_{n-1}\eta_{n-1}^{2}),
|\eta_{i}|\leq \frac{1}{2}},
\end{align}
where $\al=(\al_{1},...,\al_{n})$ with $\al_{i}=\pm 1$.

Let $\mathcal{N}_\delta=\mathcal{N}_\delta(H_{\al}^{n-1})$ be the $\delta$ neighborhood of $H_{\al}^{n-1}$, and let $\mathcal{P}_\delta$ be a finitely overlapping cover of $\mathcal{N}_\delta$ with $\sim \delta^{1 / 2} \times \ldots \delta^{1 / 2} \times \delta$ rectangular boxes $\theta$ of the form
$$
\theta=\left\{\left(\eta_1, \ldots, \eta_{n-1}, \eta+\al_{1}\eta_1^2+\ldots+\al_{n-1}\eta_{n-1}^2\right):\left(\eta_1, \ldots, \eta_{n-1}\right) \in C_\theta,|\eta| \leq 2 \delta\right\}
$$
where $C_\theta$ runs over all cubes $c+\left[-\frac{\delta^{1 / 2}}{2}, \frac{\delta^{1 / 2}}{2}\right]^{n-1}$ with $c \in \frac{\delta^{1 / 2}}{2} \mathbb{Z}^{n-1} \cap[-1 / 2,1 / 2]^{n-1}$. Note that each $\theta$ sits inside a $\sim \delta^{1 / 2} \times \ldots \delta^{1 / 2} \times \delta$ rectangular box. We denote by $P_{\theta}\phi$ the Fourier restriction of $\phi$ to $\theta$.

\begin{lemma}[$l^{2}$-decoupling, \cite{BD15,BD17}]\label{lemma:l2 decoupling}
If $\operatorname{supp}(\widehat{\phi})\subset \mathcal{N}_{\delta}(H_{\al}^{n-1})$, then for
$\ve>0$,
\begin{align}
&\n{\phi}_{L^{p}(\R^{n})}\lesssim_{\ve}\delta^{-\frac{n-1}{4}+\frac{n+1}{2p}-\ve}
\lrs{\sum_{\theta\in \mathcal{P}_{\delta}}\n{P_{\theta}\phi}_{L^{p}(\R^{n})}^{2}}^{\frac{1}{2}}, \quad \text{for $p\geq \frac{2(n+1-d(\al))}{n-1-d(\al)}$},\label{equ:l2 decoupling, p>p0}\\
&\n{\phi}_{L^{p}(\R^{n})}\lesssim_{\ve}\delta^{-\frac{d(\al)}{4}+\frac{d(\al)}{2p}-\ve}
\lrs{\sum_{\theta\in \mathcal{P}_{\delta}}\n{P_{\theta}\phi}_{L^{p}(\R^{n})}^{2}}^{\frac{1}{2}}, \quad \text{for $p\in [2,\frac{2(n+1-d(\al))}{n-1-d(\al)}$}],
\end{align}
where $d(\al)$ is the minimum of the number of
positive and negative entries of $\al$.
\end{lemma}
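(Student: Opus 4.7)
Since the lemma is quoted directly from the work of Bourgain and Demeter \cite{BD15,BD17}, the proposal is not to give a new argument but to indicate which parts of their machinery yield the two inequalities in the form stated. The first estimate \eqref{equ:l2 decoupling, p>p0} is the sharp $l^2$-decoupling inequality for the truncated hyperbolic paraboloid $H_\al^{n-1}$ at and above the critical exponent $p_c(\al) := \frac{2(n+1-d(\al))}{n-1-d(\al)}$: the elliptic case $d(\al)=0$ is the paraboloid decoupling of \cite{BD15}, and the signature-dependent version for general $\al$ is the hyperbolic paraboloid decoupling of \cite{BD17}. The second inequality, valid for $p\in[2,p_c(\al)]$, follows by interpolating the $p=p_c(\al)$ case of \eqref{equ:l2 decoupling, p>p0} against the trivial $L^2$-orthogonality identity $\n{\phi}_{L^2}=(\sum_\theta \n{P_\theta\phi}_{L^2}^2)^{1/2}$; a short direct calculation shows that the affine function $-\tfrac{d(\al)}{4}+\tfrac{d(\al)}{2p}$ of $1/p$ vanishes at $p=2$ and coincides with the critical loss $-\tfrac{d(\al)}{2(n+1-d(\al))}$ at $p=p_c(\al)$, which pins down the exponent.

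For the critical endpoint, the Bourgain-Demeter strategy proceeds in three broad steps. First, linear $l^2$-decoupling at scale $\delta$ is reduced to a $k$-linear decoupling with $k=d(\al)+1$, the largest number of quantitatively transverse caps one can select on $H_\al^{n-1}$. The signature enters here because the normal to $H_\al^{n-1}$ at base $(\eta_1,\dots,\eta_{n-1})$ is proportional to $(2\al_1\eta_1,\dots,2\al_{n-1}\eta_{n-1},-1)$, so along the saddle coordinate directions (where the $\al_j$ take both signs) the form has no strictly convex curvature and the transversality rank is capped at $d(\al)+1$ rather than $n$. Second, the $k$-linear decoupling constant is controlled by combining the Bennett-Carbery-Tao multilinear Kakeya / Brascamp-Lieb inequality with the wave packet decomposition indexed by the caps $\theta\in\mathcal{P}_\delta$. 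Third, a Bourgain-Guth type induction on scales bootstraps the linear constant $D(\delta,p)$ by rescaling $\delta^{1/2}$-caps back to the unit scale via the affine symmetries of $H_\al^{n-1}$, in the form $D(\delta,p)\lesssim D(\delta^{1/2},p)\cdot(\text{$k$-linear input})$; iterating $O(\log(1/\delta))$ times concentrates all losses into a single $\delta^{-\ve}$ factor.

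The main technical obstacle — and the reason the hyperbolic case required the separate paper \cite{BD17} beyond \cite{BD15} — is controlling the indefinite directions of the quadratic form $\sum_j \al_j \eta_j^2$ in which genuine curvature is absent. These directions must be handled by a lower-dimensional $l^2$-decoupling along coordinate subspaces, iterated inside a Brascamp-Lieb-based multilinear induction; this is the delicate part of \cite{BD17}. For our purposes in Section \ref{sec:Strichartz Estimates}, only the sharp critical-exponent version \eqref{equ:l2 decoupling, p>p0} will actually be applied (in the scaling step leading to the full-range Strichartz estimate \eqref{equ:strichartz,different frequency,thm}), so the lemma is invoked as a black box and no independent proof is given here.
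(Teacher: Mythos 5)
Your proposal takes the same approach as the paper: Lemma \ref{lemma:l2 decoupling} is quoted from Bourgain--Demeter \cite{BD15,BD17} as a black box, and the paper supplies no proof either; your brief account of the subcritical range as a log-convex interpolation between the trivial $L^2$-orthogonality and the critical exponent is correct (both affine exponents agree at $p_c(\al)$ and the subcritical one vanishes at $p=2$), and your sketch of the multilinear/induction-on-scales structure of BD is an accurate summary. One small inaccuracy in your closing remark: the subcritical branch of the lemma is in fact used downstream as well — it underlies Proposition \ref{lemma:strichartz estimate,different frequency,2} for $p\in[2,p_0]$, which is invoked at $p=p_0-\ve$ in the loss-term bilinear estimate precisely because the constraint $\tfrac{d}{p}+\ga>0$ there forces one strictly below the critical exponent.
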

\begin{remark}
As pointed out by \cite{BD17}, in the hyperbolic case $d(\al)\geq 1$, the exponent is sharp but always nonzero, in contrast with the continuous case and the elliptic case $d(\al)=0$.
\end{remark}

Next, we get into the analysis of Strichartz estimates.
Let $\chi(x)$ be a smooth function that satisfies $\chi(x) = 1$ for all $|x| \leq 1$ and $\chi(x) = 0$ for $|x| \geq 2$. Let $N$ be a dyadic number, and define $\varphi_{N}(x) = \chi\left(\frac{x}{N}\right) - \chi\left(\frac{x}{2N}\right)$. The Littlewood-Paley projector is then given by
\begin{align}
\widehat{P_{N}u}(\eta) = \varphi_{N}(\eta) \widehat{u}(\eta).
\end{align}
We denote by $P_{N}^{x}$/$P_{M}^{\xi}$ the projector of the $x$-variable and $\xi$-variable respectively.
\begin{proposition}[ Strichartz estimates with separated $(x,\xi)$ frequencies]\label{lemma:strichartz estimate,different frequency}
Let $p_{0}=\frac{2(d+2)}{d}$.
For $p\geq p_{0}$,
\begin{align}\label{equ:strichartz,different frequency}
&\n{P_{\leq N}^{x}P_{\leq M}^{\xi}e^{i t \nabla_{\xi}\cdot \nabla_{x}} \wt{f}_{0}}_{L_{t}^p([0,1]\times \mathbb{T}^d\times \R^{d})}\\
\lesssim_{\ve}&\max\lr{1,MN^{-1}}^{\frac{1}{p}}N^{\frac{d}{2}-\frac{d+1}{p}+\ve}
M^{\frac{d}{2}-\frac{d+1}{p}}\n{P_{\leq N}^{x}P_{\leq M}^{\xi}\wt{f}_{0}}_{L_{x,\xi}^{2}}.\notag
\end{align}

\end{proposition}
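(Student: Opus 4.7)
The plan is to identify the evolution $e^{it\nabla_{\xi}\cdot\nabla_{x}}$ with Fourier extension on a hyperbolic paraboloid and invoke Lemma \ref{lemma:l2 decoupling}. Multiplying by a smooth time cutoff $\chi(t)$ with $\chi\equiv 1$ on $[0,1]$, the space-time Fourier transform of $\chi(t)e^{it\nabla_{\xi}\cdot\nabla_{x}}P_{\leq N}^{x}P_{\leq M}^{\xi}\wt{f}_{0}$ equals $\hat{\chi}(\tau+k\cdot\eta)\widehat{P_{\leq N}^{x}P_{\leq M}^{\xi}\wt{f}_{0}}(k,\eta)$, concentrated within an $O(1)$ neighborhood of $\{\tau=-k\cdot\eta:|k|\lesssim N,|\eta|\lesssim M\}\subset\R^{2d+1}$. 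The orthogonal rotation $(k,\eta)\mapsto((k+\eta)/\sqrt{2},(k-\eta)/\sqrt{2})$ turns this surface into a hyperbolic paraboloid of equal signature, so $d(\al)=d$ and $n=2d+1$; the decoupling critical exponent $\tfrac{2(n+1-d(\al))}{n-1-d(\al)}=\tfrac{2(d+2)}{d}=p_{0}$ matches precisely the threshold stated in the proposition.

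First I would dispose of $M>N$ via the scaling identity from the outline: with $a:=M/N>1$ and $g(x,\xi):=\wt{f}_{0}(x,\xi/a)$, which has both $x$- and $\xi$-frequencies $\lesssim N$, one has $e^{it\nabla_{\xi}\cdot\nabla_{x}}\wt{f}_{0}(x,\xi)=(e^{iat\nabla_{\xi}\cdot\nabla_{x}}g)(x,a\xi)$; changing variables $s=at,\zeta=a\xi$ converts the $L^{p}$-norm on $[0,1]\times\T^{d}\times\R^{d}$ into $a^{-(d+1)/p}$ times the $L^{p}$-norm of $e^{is\nabla_{\xi}\cdot\nabla_{x}}g$ on $[0,a]\times\T^{d}\times\R^{d}$, and $l^{p}$-summing over $\sim M/N$ unit sub-intervals (via time-translation invariance and the balanced-case estimate) produces exactly the factor $(M/N)^{1/p}=\max\lr{1,MN^{-1}}^{1/p}$. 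For the remaining balanced case $M\leq N$, I rescale frequencies anisotropically by $(k,\eta,\tau)\mapsto(N\tilde{k},M\tilde{\eta},NM\tilde{\tau})$, normalizing the truncated surface to $\tilde{\tau}=-\tilde{k}\cdot\tilde{\eta}$ on $|\tilde{k}|,|\tilde{\eta}|\leq 1$ with $\de$-thickness $\de=1/(NM)$. Lemma \ref{lemma:l2 decoupling} at $p\geq p_{0}$ then yields
\begin{align*}
\n{\tilde{u}}_{L^{p}}\lesssim_{\ve}\de^{-d/2+(d+1)/p-\ve}\lrs{\sum_{\theta}\n{P_{\theta}\tilde{u}}_{L^{p}}^{2}}^{1/2};
\end{align*}
Bernstein's inequality on each $\de^{1/2}\times\cdots\times\de^{1/2}\times\de$ cap gives $\n{P_{\theta}\tilde{u}}_{L^{p}}\lesssim\de^{(d+1)(1/2-1/p)}\n{P_{\theta}\tilde{u}}_{L^{2}}$, and $L^{2}$-orthogonality of the caps together with unitarity of the evolution bounds $\sum_{\theta}\n{P_{\theta}\tilde{u}}_{L^{2}}^{2}$ by $(NM)^{d+1}\n{\wt{f}_{0}}_{L^{2}}^{2}$. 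Collecting exponents and undoing the rescaling delivers $\n{u}_{L^{p}([0,1]\times\T^{d}\times\R^{d})}\lesssim_{\ve}(NM)^{d/2-(d+1)/p+\ve}\n{\wt{f}_{0}}_{L^{2}}$, which reduces to \eqref{equ:strichartz,different frequency} after absorbing $M^{\ve}\leq N^{\ve}$ into the $N$-exponent.

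The chief technical obstacle is that Lemma \ref{lemma:l2 decoupling} is stated on $\R^{n}$, whereas the physical domain here is $\T^{d}\times\R^{d}$; under the rescaling the $x$-variable lives on a torus of size $\sim N$ while $\xi$ remains in $\R^{d}$. Transferring Euclidean $l^{2}$-decoupling to this mixed setting requires either multiplying by a Schwartz cutoff in $x$ on a large ball and passing to the limit, or invoking the lattice/periodic version of decoupling implicit in \cite{BD15,BD17}; the application of Bernstein on each cap likewise needs the local-global principle on the tiled torus. A related subtlety is the anisotropic rescaling $N$ versus $M$; this is accommodated because the bilinear form $\tilde{\tau}=-\tilde{k}\cdot\tilde{\eta}$ allows the two $d$-planes in $(\tilde{k},\tilde{\eta})$ to be rescaled independently without altering the paraboloid's shape or its signature $d(\al)=d$, and this is precisely the mechanism producing the sharp $M^{d/2-(d+1)/p}$ exponent in \eqref{equ:strichartz,different frequency}.
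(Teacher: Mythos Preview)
Your proposal is correct and uses the same toolkit as the paper (the $l^{2}$-decoupling Lemma~\ref{lemma:l2 decoupling} for the hyperbolic paraboloid of signature $d(\al)=d$, Bernstein on caps plus $L^{2}$-orthogonality, and the $\xi$-scaling identity $e^{it\nabla_{\xi}\cdot\nabla_{x}}(\phi(x,a\xi))=(e^{iat\nabla_{\xi}\cdot\nabla_{x}}\phi)(x,a\xi)$), but the organization is genuinely different. The paper proceeds in three steps: first it proves the \emph{comparable-frequency} case $M=N$ on $[0,T]$ by the cutoff-and-extend trick plus isotropic rescaling and decoupling; then it reduces the case $M=1$ (arbitrary $N$) to Step~1 purely via the $\xi$-scaling identity, which trades the $\xi$-frequency change for a change of the time interval; finally it reduces general $M$ to $M=1$ by another $\xi$-scaling. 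Crucially, in Steps~2--3 only $\xi\in\R^{d}$ and $t$ are rescaled, never $x\in\T^{d}$, so the torus-to-Euclidean transfer is confined to the single symmetric step $M=N$. By contrast, you attack the entire range $M\leq N$ in one shot through the anisotropic rescaling $(k,\eta,\tau)\mapsto(k/N,\eta/M,\tau/(NM))$ and then apply decoupling with $\de=1/(NM)$. This is more direct and conceptually appealing, but it forces you to perform the torus extension in the anisotropic setting (the rescaled $x$-variable lives on a torus of side $\sim N$ while $\xi$ is already Euclidean), which is exactly the ``chief technical obstacle'' you flag; it can be done, but the paper's route sidesteps it entirely by exploiting that the operator's special scaling law allows all the asymmetry between $M$ and $N$ to be absorbed into the $\xi$- and $t$-variables alone. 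Your treatment of the case $M>N$ via $\xi$-scaling plus $l^{p}$-summation over $\sim M/N$ unit time intervals matches the paper's mechanism (this is where the $\max\{1,MN^{-1}\}^{1/p}$ factor comes from in both arguments).
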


\begin{proposition}\label{lemma:strichartz estimate,different frequency,2}
For $p\in[2,p_{0}]$,
\begin{align}\label{equ:strichartz,different frequency,p<p0}
&\n{P_{\leq N}^{x}P_{\leq M}^{\xi}e^{i t \nabla_{\xi}\cdot \nabla_{x}} \wt{f}_{0}}_{L_{t}^p([0,1]\times \mathbb{T}^d\times \R^{d})}\\
\lesssim_{\ve}&\max\lr{1,MN^{-1}}^{\frac{1}{p}}N^{\frac{1}{p}+\ve}
M^{\frac{d}{2}-\frac{d+1}{p}}\n{P_{\leq N}^{x}P_{\leq M}^{\xi}\wt{f}_{0}}_{L_{x,\xi}^{2}}.\notag
\end{align}
\end{proposition}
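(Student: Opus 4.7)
The plan is to deduce the proposition by Riesz-Thorin complex interpolation between the endpoint decoupling estimate of Proposition \ref{lemma:strichartz estimate,different frequency} (the $p\geq p_{0}$ case) and the trivial $L^{2}$-conservation identity. Consider the complex-linear operator
\[
Tg := P_{\leq N}^{x}P_{\leq M}^{\xi}e^{it\nabla_{\xi}\cdot \nabla_{x}}g
\]
from $L_{x,\xi}^{2}(\T^{d}\times \R^{d})$ into $L^{p}([0,1]\times \T^{d}\times \R^{d})$. At $p=2$, unitarity of the symmetric hyperbolic Schr\"{o}dinger propagator together with Fubini on the unit time interval yields $\n{T}_{L^{2}\to L^{2}}\leq 1$. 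At the critical endpoint $p=p_{0}=\frac{2(d+2)}{d}$, the arithmetic identity $\frac{d}{2}-\frac{d+1}{p_{0}}=\frac{1}{p_{0}}$ collapses Proposition \ref{lemma:strichartz estimate,different frequency} to
\[
\n{Tg}_{L^{p_{0}}([0,1]\times \T^{d}\times \R^{d})} \lesssim_{\ve} \max\lr{1,MN^{-1}}^{\frac{1}{p_{0}}} N^{\frac{1}{p_{0}}+\ve} M^{\frac{1}{p_{0}}}\n{g}_{L_{x,\xi}^{2}}.
\]

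Parametrizing $\frac{1}{p}=\frac{1-\theta}{2}+\frac{\theta}{p_{0}}$, so that $\theta=(d+2)\lrs{\frac{1}{2}-\frac{1}{p}}$ and $\frac{\theta}{p_{0}}=\frac{d}{4}-\frac{d}{2p}$, Riesz-Thorin produces the interpolated bound
\[
\n{Tg}_{L^{p}([0,1]\times \T^{d}\times \R^{d})}\lesssim_{\ve} \max\lr{1,MN^{-1}}^{\frac{d}{4}-\frac{d}{2p}} N^{\frac{d}{4}-\frac{d}{2p}+\ve} M^{\frac{d}{4}-\frac{d}{2p}} \n{g}_{L_{x,\xi}^{2}}.
\]

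It remains to check that this interpolated bound is pointwise dominated by the right-hand side of the proposition being proved. Split into cases on $\max\lr{1,MN^{-1}}$. In the resonant regime $M\geq N$, evaluating $\max\lr{1,MN^{-1}}=MN^{-1}$ collapses both sides to $M^{\frac{d}{2}-\frac{d}{p}}N^{\ve}$, via the twin identities $\frac{2\theta}{p_{0}}=\frac{d}{2}-\frac{d}{p}$ and $\frac{1}{p}+\frac{d}{2}-\frac{d+1}{p}=\frac{d}{2}-\frac{d}{p}$, so the two bounds agree exactly. In the off-resonant regime $M<N$, with $\max\lr{1,MN^{-1}}=1$, the quotient of the claimed bound by the interpolated bound equals $\lrs{N/M}^{\frac{d+2}{2p}-\frac{d}{4}}$, whose exponent is non-negative precisely when $p\leq p_{0}$, so the quotient is $\geq 1$ and the claim follows.

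All of the analytic content is carried by Proposition \ref{lemma:strichartz estimate,different frequency} and $L^{2}$-unitarity; the only remaining work is the exponent bookkeeping above. The $M^{\frac{d}{2}-\frac{d+1}{p}}$ factor in the proposition is chosen to match the formal structure of Proposition \ref{lemma:strichartz estimate,different frequency} across the transition point $p=p_{0}$, and this factor coincides with the sharp Riesz-Thorin exponent exactly on the resonant face $M\geq N$ while being strictly weaker (hence still valid) when $M<N$. There is no genuine analytical obstacle; the main point to verify is the algebraic compatibility of the exponents across the two regimes of $\max\lr{1,MN^{-1}}$.
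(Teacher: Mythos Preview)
Your argument is correct and takes a genuinely different route from the paper. The paper indicates that Proposition~\ref{lemma:strichartz estimate,different frequency,2} ``follows from a similar way'' to Proposition~\ref{lemma:strichartz estimate,different frequency}, meaning one reruns the three-step scaling argument (Steps~1--3) but invokes the second branch of Lemma~\ref{lemma:l2 decoupling} (the decoupling estimate for $p\in[2,\frac{2(n+1-d(\al))}{n-1-d(\al)}]$, with $d(\al)=d$ here) in place of the first branch. Your approach instead bypasses re-entering the decoupling machinery altogether by interpolating the finished Strichartz bound between the already-established $p=p_{0}$ endpoint and the trivial $p=2$ conservation law. This is more economical---it uses only the $p\geq p_{0}$ half of the decoupling theorem---and in fact your interpolated bound $N^{\frac{d}{4}-\frac{d}{2p}+\ve}M^{\frac{d}{4}-\frac{d}{2p}}$ in the regime $M<N$ is strictly sharper than the stated bound whenever $p<p_{0}$. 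The paper's direct rerun has the advantage of producing exactly the stated exponents without any case-splitting in $M\gtrless N$, and it treats both ranges of $p$ by a uniform mechanism; you trade that uniformity for a shorter argument that leans only on the critical-exponent case plus Riesz--Thorin.
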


\begin{remark}
For
$p>\frac{2(d+2)}{d}$, the term
$N^{\ve}$ in \eqref{equ:strichartz,different frequency} could potentially be removed. In the case of $e^{it\Delta}$ on $\T^{d}$, see \cite{bourgain1993fourier1,KV16}. However, since this refinement does not lead to an improved result for the bilinear estimates we need, we skip it for simplicity.
\end{remark}

\begin{remark}\label{remark:symmetry and scaling}
Let us provide more explanations regarding on the symmetric hyperbolic Schr\"{o}dinger operator $e^{it\nabla_{\xi}\cdot \nabla_{x}}$ on $\T^{d}\times \R^{d}$, which is different from the elliptic Schr\"{o}dinger operator $e^{it\Delta_{x}+ it\Delta_{\xi}}$ on $\T^{d}\times \R^{d}$. We might as well set $d=3$ and hence $p_{0}=\frac{10}{3}$.
\begin{itemize}
\item (Hyperbolic v.s. elliptic.)
It is pointed out by \cite{BD17} that the exponent of the $l^{2}$-decoupling theorem depends on the type of hypersurface. Consequently,
the critical points differ between the hyperbolic and elliptic cases. For the hyperbolic case, the critical point is $p_{0}=\frac{10}{3}$, while for the elliptic case, it is $p_{1}=\frac{8}{3}$. More specifically, the elliptic Strichartz estimate in \cite[Proposition 3.4]{Bar21} is
\begin{align*}
\n{e^{it\Delta_{x}+it\Delta_{\xi}}P_{\leq N}^{x}P_{\leq N}^{\xi}u}_{L^{p}([0,1]\times \T^{3}\times \R^{3})}\lesssim_{\ve} N^{\frac{3p-8}{p}+\ve}\n{P_{\leq N}^{x}P_{\leq N}^{\xi}u}_{L_{x,\xi}^{2}},\quad p\geq p_{1}=\frac{8}{3}.
\end{align*}
Interestingly, the point $p_{0}=\frac{10}{3}$
 also coincides with the critical point for $e^{it\it\Delta_{x}}$ on $\T^{3}$.
\item (Local v.s. global.) The global-in-time Strichartz estimates might be true for $e^{it\Delta_{x}+it\Delta_{\xi}}$ on the waveguide
$\T^{d}\times \R^{d}$. However, they cannot hold for symmetric hyperbolic Schr\"{o}dinger operator. This is because
    $$e^{it\nabla_{\xi}\cdot\nabla_{x}}(\psi(\xi))=\psi(\xi),$$
    which is time-independent.
     Consequently, any global-in-time Strichartz estimates cannot apply.

\item (Scaling property.) Unlike the elliptic or hyperbolic Schr\"{o}dinger operators $e^{it\Delta_{x}\pm it\Delta_{\xi}}$ on the waveguide $\T^{d}\times \R^{d}$, whose partial derivatives can be separated, the symmetric hyperbolic Schr\"{o}dinger operator $e^{it\nabla_{\xi}\cdot \nabla_{x}}$ does not allow for such separation, making it more hard to handle. However, it possesses a scaling property given by
$$e^{it\nabla_{\xi}\cdot \nabla_{x}}(\phi(t,x,a\xi))=(e^{iat\nabla_{\xi}\cdot \nabla_{x}}\phi)(t,x,a\xi),$$
which allows us to perform scaling analysis on both the $\xi$ variable and the time variable $t$.
This property is crucial for our proof of Strichartz estimates \eqref{equ:strichartz,different frequency}--\eqref{equ:strichartz,different frequency,p<p0}.
\end{itemize}
\end{remark}
\begin{remark}
For the sake of completeness, we present the Strichartz estimate \eqref{equ:strichartz,different frequency,p<p0} for a range of
$p\in [2,p_{0}]$. In this paper, we specifically utilize this estimate \eqref{equ:strichartz,different frequency,p<p0} within the range $[p_{0}-\ve,p_{0}]$ for sufficiently small $\ve>0$.
\end{remark}
\begin{proof}[\textbf{Proof of Propositions $\ref{lemma:strichartz estimate,different frequency}$ and
$\ref{lemma:strichartz estimate,different frequency,2}$}]
It suffices to prove \eqref{equ:strichartz,different frequency}, as \eqref{equ:strichartz,different frequency,p<p0} follows from a similar way.
We divide the entire proof of \eqref{equ:strichartz,different frequency} into the three steps.

\textbf{Step 1. Strichartz estimate with comparable $(x,\xi)$ frequency:}
\begin{align}\label{equ:strichartz,same frequency}
\n{P_{\leq N}^{x}P_{\leq N}^{\xi}e^{i t \nabla_{\xi}\cdot \nabla_{x}}\wt{f}_{0}}_{L_{t}^p([0,T]\times \mathbb{T}^d\times \R^{d})} \lesssim_{\ve} T^{\frac{1}{p}} N^{d-\frac{2d+2}{p}+\ve}\n{\wt{f}_{0}}_{L_{x,\xi}^2}.
\end{align}
Without loss of generality, we consider the case where $T=1$, as the general case $T\gg 1$ can be derived from the summation over the time intervals.

Let $\eta(t):\R\mapsto \mathbb{C}$ and $\zeta(x):\R^{d}\mapsto \mathbb{C}$ be two smooth functions such that
\begin{align}
&\text{$\operatorname{supp}\widehat{\eta}\subset (-1,1)$, and $|\eta(t)|\geq 1$ for $t\in (-1,1)$,}\\
&\text{$\operatorname{supp}\widehat{\zeta}\subset B(0,1)$, and $|\zeta(x)|\geq 1$ for $x\in B(0,1)$.}\label{equ:cutoff function,supp}
\end{align}
For simplicity, we also set
\begin{align*}
F_{N}(t,x,\xi)=P_{\leq N}^{x}P_{\leq N}^{\xi}e^{i t \nabla_{\xi}\cdot \nabla_{x}}\wt{f}_{0},\quad G_{N}(t,x,\xi)=\eta(t)\zeta(\frac{x}{N})F_{N}(t,x,\xi).
\end{align*}
Using the periodicity in the
$x$-variable, we obtain
\begin{align}\label{equ:FN,GN}
\n{F_{N}(t,x,\xi)}_{L^{p}_{t,x,\xi}([-1,1]\times \mathbb{T}^{d}\times \R^{d})}
\lesssim &N^{-\frac{d}{p}}\n{F_{N}(t,x,\xi)}_{L^{p}_{t,x,\xi}([-1,1]\times [-N,N]^{d}\times \R^{d})}\\
\lesssim
&N^{-\frac{d}{p}}\n{G_{N}(t,x,\xi)}_{L^{p}_{t,x,\xi}(\R\times \R^{d}\times \R^{d})}\notag\\
=&N^{-\frac{d}{p}}\n{G_{1,N}(t,x,\xi)}_{L^{p}_{t,x,\xi}(\R\times \R^{d}\times \R^{d})},\notag
\end{align}
where
\begin{align}\label{equ:rotation,scaling}
G_{1,N}(t,x,\xi)=N^{-\frac{2d+2}{p}}G_{N}(\frac{t}{N^{2}},\frac{x-\xi}{2N},\frac{x+\xi}{2N}).
\end{align}
Noticing that $\operatorname{supp}(\widehat{G_{1,N}})\subset \mathcal{N}_{\delta}(H_{\al}^{n-1})$ with
$n=2d+1$ and $\delta\sim N^{-2}$,
we can apply the $l^{2}$-decoupling in Lemma \ref{lemma:l2 decoupling} to $G_{1,N}(t,x,\xi)$, and hence obtain
\begin{align}\label{equ:estimate,G1N}
\n{G_{1,N}(t,x,\xi)}_{L^{p}_{t,x,\xi}(\R\times \R^{d}\times \R^{d})}\lesssim_{\ve}& N^{d-\frac{2d+2}{p}+\ve}
\lrs{\sum_{\theta\in \mathcal{P}_{\delta}} \n{P_{\theta}G_{1,N}}_{L_{t,x,\xi}^{p}}^{2} }^{\frac{1}{2}}\\
\lesssim & N^{d-\frac{2d+2}{p}+\ve}
\lrs{\sum_{|m|\leq N,|n|\leq N} \n{P_{\theta_{m,n}}G_{N}}_{L_{t,x,\xi}^{p}}^{2} }^{\frac{1}{2}},\notag
\end{align}
where $P_{\theta_{n,m}}$ is the projection onto the frequency set
\begin{align}\label{equ:theta,m,n,supp}
\theta_{m,n}=\lr{(\tau,y,v):y\in m+[-\frac{1}{2},\frac{1}{2}]^{d},v\in n+[-\frac{1}{2},\frac{1}{2}]^{d},|\tau-y\cdot v|\leq 1},\quad m,n\in \frac{\Z^{d}}{2}.
\end{align}
Alternatively, one could also apply $l^{2}$-decoupling inequalities for a broad
class of quadratic forms established in \cite[Corollary 1.3]{GOZZ23}
directly to the hyperbolic quadratic form $x\cdot \xi$, and thereby avoid the rotation in \eqref{equ:rotation,scaling}.

Then, using Hausdorff-Young inequality, the support conditions in \eqref{equ:cutoff function,supp} and \eqref{equ:theta,m,n,supp}, and H\"{o}lder inequality, we have
\begin{align}
&\n{P_{\theta_{m,n}}G_{N}}_{L_{t,x,\xi}^{p}}\\
=&\n{P_{\theta_{m,n}}\lrc{\eta(t)\zeta(\frac{x}{N})P_{\leq N}^{x}P_{\leq N}^{\xi}e^{i t \nabla_{\xi}\cdot \nabla_{x}}\wt{f}_{0}}}_{L_{t,x,\xi}^{p}}\notag\\
\lesssim& \bbn{1_{\theta_{m,n}}\lrc{\sum_{|k|\leq N}\widehat{\eta}(\tau-k\cdot v)N^{d}\widehat{\zeta}(N(y-k))\chi_{N}(v)\widehat{\wt{f}_{0}}(k,v)}}_{L_{\tau,y,v}^{p'}}\notag\\
\lesssim& \sum_{|k-m|\lesssim 1}\bbn{1_{\theta_{m,n}}\lrc{\widehat{\eta}(\tau-k\cdot v)N^{d}\widehat{\zeta}(N(y-k))\chi_{N}(v)\widehat{\wt{f}_{0}}(k,v)}}_{L_{\tau,y,v}^{p'}}\notag\\
\lesssim& \sum_{|k-m|\lesssim 1}\n{\widehat{\eta}}_{L_{\tau}^{p'}}\n{N^{d}\widehat{\zeta}(N (y-k))}_{L_{y}^{p'}}\n{1_{\lr{n+[-\frac{1}{2},\frac{1}{2}]^{d}}}(v)\widehat{\wt{f}_{0}}(k,v)}_{L_{v}^{p'}}\notag\\
\lesssim& \sum_{|k-m|\lesssim 1} N^{\frac{d}{p}}\n{1_{\lr{n+[-\frac{1}{2},\frac{1}{2}]^{d}}}(v)\widehat{\wt{f}_{0}}(k,v)}_{L_{v}^{2}}.\notag
\end{align}
Therefore, by the $L^{2}$ almost orthogonality, we arrive at
\begin{align}\label{equ:m,n,Gn,estimate}
\lrs{\sum_{|m|\leq N,|n|\leq N} \n{P_{\theta_{m,n}}G_{N}}_{L_{t,x,\xi}^{p}}^{2} }^{\frac{1}{2}}\lesssim& N^{\frac{d}{p}}\lrc{
\sum_{|m|\leq N,|n|\leq N} \sum_{|k-m|\lesssim 1} \n{1_{\lr{n+[-\frac{1}{2},\frac{1}{2}]^{d}}}(v)\widehat{\wt{f}_{0}}(k,v)}_{L_{v}^{2}}^{2}}^{\frac{1}{2}}\\
\lesssim&N^{\frac{d}{p}}\n{\wt{f}_{0}}_{L_{x,\xi}^{2}}.\notag
\end{align}

Putting \eqref{equ:FN,GN}, \eqref{equ:estimate,G1N}, and \eqref{equ:m,n,Gn,estimate} together, we obtain
\begin{align}
\n{F_{N}(t,x,\xi)}_{L^{p}_{t,x,\xi}([0,1]\times \mathbb{T}^{d}\times \R^{d})}\lesssim_{\ve}
N^{d-\frac{2d+2}{p}+\ve} \n{\wt{f}_{0}}_{L_{x,\xi}^{2}},
\end{align}
and complete the proof of Strichartz estimate \eqref{equ:strichartz,same frequency} with the comparable $(x,\xi)$ frequency.

\textbf{Step 2. Strichartz estimate with high-low $(x,\xi)$ frequency:}
\begin{align}\label{equ:strichartz,high-low frequency}
\n{P_{\leq N}^{x}P_{\leq 1}^{\xi}e^{i t \nabla_{\xi}\cdot \nabla_{x}}\wt{f}_{0}}_{L_{t}^p([0,T]\times \mathbb{T}^d\times \R^{d})}
\lesssim_{\ve}\max\lr{1,TN^{-1}}^{\frac{1}{p}} N^{\frac{d}{2}-\frac{d+1}{p}+\ve}\n{\wt{f}_{0}}_{L_{x,\xi}^{2}}.
\end{align}
To do this, we introduce the scaling operator
\begin{align*}
 (\delta_{M}^{\xi}g)(\xi)=g(M\xi),
\end{align*}
and have
\begin{align}\label{equ:scaling analysis}
\delta_{N}^{\xi}P_{\leq 1}^{\xi}=P_{\leq N}^{\xi}\delta_{N}^{\xi},
\quad \delta_{N}^{\xi}e^{it\nabla_{\xi}\cdot \nabla_{x}}=e^{\frac{it\nabla_{\xi}\cdot \nabla_{x}}{N}}\delta_{N}^{\xi}.
\end{align}
Therefore, by scaling and Strichartz estimate \eqref{equ:strichartz,same frequency} with the comparable $(x,\xi)$ frequency, we obtain
\begin{align*}
&\n{P_{\leq N}^{x}P_{\leq 1}^{\xi}e^{i t \nabla_{\xi}\cdot \nabla_{x}}\wt{f}_{0}}_{L_{t}^p([0,T]\times \mathbb{T}^d\times \R^{d})} \\
=&\n{P_{\leq N}^{x}\delta_{\frac{1}{N}}^{\xi}\delta_{N}^{\xi}P_{\leq 1}^{\xi}e^{i t \nabla_{\xi}\cdot \nabla_{x}}\wt{f}_{0}}_{L_{t}^p([0,T]\times \mathbb{T}^d\times \R^{d})} \\
=&\n{P_{\leq N}^{x}\delta_{\frac{1}{N}}^{\xi}P_{\leq  N}^{\xi}e^{\frac{i t \nabla_{\xi}\cdot \nabla_{x}}{N}}\delta_{N}^{\xi}\wt{f}_{0}}_{L_{t}^p([0,T]\times \mathbb{T}^d\times \R^{d})}\\
=&N^{\frac{d}{p}}N^{\frac{1}{p}}\n{P_{\leq N}^{x}P_{\leq  N}^{\xi}e^{i t \nabla_{\xi}\cdot \nabla_{x}}\delta_{N}^{\xi}\wt{f}_{0}}_{L_{t}^p([0,\frac{T}{N}]\times \mathbb{T}^d\times \R^{d})}\\
\lesssim_{\ve}&\max\lr{1,TN^{-1}}^{\frac{1}{p}}N^{\frac{d}{p}}N^{\frac{1}{p}} N^{d-\frac{2d+2}{p}+\ve}\n{\delta_{N}^{\xi}\wt{f}_{0}}_{L_{x,\xi}^{2}}\\
=&\max\lr{1,TN^{-1}}^{\frac{1}{p}} N^{\frac{d}{2}-\frac{d+1}{p}+\ve}\n{\wt{f}_{0}}_{L_{x,\xi}^{2}}.
\end{align*}

\textbf{Step 3. Strichartz estimate \eqref{equ:strichartz,different frequency} with separated $(x,\xi)$ frequency.}

To prove \eqref{equ:strichartz,different frequency}, using the scaling analysis \eqref{equ:scaling analysis} again and Strichartz estimate \eqref{equ:strichartz,high-low frequency} with high-low $(x,\xi)$ frequency, we get
\begin{align*}
&\n{P_{\leq N}^{x}P_{\leq M}^{\xi}e^{i t \nabla_{\xi}\cdot \nabla_{x}} \wt{f}_{0}}_{L_{t}^p([0,1]\times \mathbb{T}^d\times \R^{d})}\\
=&\n{P_{\leq N}^{x}P_{\leq M}^{\xi}\delta_{M}^{\xi}\delta_{\frac{1}{M}}^{\xi}
e^{i t \nabla_{\xi}\cdot \nabla_{x}} \wt{f}_{0}}_{L_{t}^p([0,1]\times \mathbb{T}^d\times \R^{d})}\\
=&\n{P_{\leq N}^{x}\delta_{M}^{\xi}P_{\leq 1}^{\xi}
e^{i t M\nabla_{\xi}\cdot \nabla_{x}} \delta_{\frac{1}{M}}^{\xi}\wt{f}_{0}}_{L_{t}^p([0,1]\times \mathbb{T}^d\times \R^{d})}\\
= &M^{-\frac{1}{p}-\frac{d}{p}}\n{P_{\leq N}^{x}P_{\leq 1}^{\xi}
e^{i t \nabla_{\xi}\cdot \nabla_{x}} \delta_{\frac{1}{M}}^{\xi}\wt{f}_{0}}_{L_{t}^p([0,M]\times \mathbb{T}^d\times \R^{d})}\\
\lesssim_{\ve}&\max\lr{1,MN^{-1}}^{\frac{1}{p}} N^{\frac{d}{2}-\frac{d+1}{p}+\ve}M^{-\frac{1}{p}-\frac{d}{p}} \n{\delta_{\frac{1}{M}}^{\xi}\wt{f}_{0}}_{L_{x,\xi}^{2}}\\
=&\max\lr{1,MN^{-1}}^{\frac{1}{p}}N^{\frac{d}{2}-\frac{d+1}{p}+\ve}
M^{\frac{d}{2}-\frac{d+1}{p}}\n{\wt{f}_{0}}_{L_{x,\xi}^{2}}.
\end{align*}
Hence, we complete the proof of \eqref{equ:strichartz,different frequency}.
\end{proof}

Using Proposition \ref{lemma:strichartz estimate,different frequency} and Littlewood-Paley theory, we arrive at the following corollary.
\begin{corollary}\label{lemma:regularity strichartz}
Let $p_{0}=\frac{2(d+2)}{d}$.
\begin{enumerate}[$(i)$]
\item Let $p\geq p_{0}$, $s_{p}>\frac{d}{2}-\frac{d+1}{p}$. There holds that
\begin{align}\label{equ:strichartz estimate,regularity}
&\n{P_{\leq N}^{x}e^{i t \nabla_{\xi}\cdot \nabla_{x}}\wt{f}_{0}}_{L_{t}^p([0,1]\times \mathbb{T}^d\times \R^{d})}
\lesssim N^{s_{p}}\n{P_{\leq N}^{x}\wt{f}_{0}}_{L_{x}^{2}H_{\xi}^{s_{p}+\frac{1}{p}}}.
\end{align}
\item Let $p\in [2,p_{0}]$, $s_{p}>\frac{d}{2}-\frac{d+1}{p}$.
There holds that
\begin{align}\label{equ:strichartz estimate,regularity,p>2}
&\n{P_{\leq N}^{x}e^{i t \nabla_{\xi}\cdot \nabla_{x}}\wt{f}_{0}}_{L_{t}^p([0,1]\times \mathbb{T}^d\times \R^{d})}
\lesssim N^{\frac{1}{p}+\ve}\n{P_{\leq N}^{x}\wt{f}_{0}}_{L_{x}^{2}H_{\xi}^{s_{p}+\frac{1}{p}}}.
\end{align}
\end{enumerate}
\end{corollary}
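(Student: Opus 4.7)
The plan is to reduce both parts to the frequency-separated Strichartz estimates in Propositions \ref{lemma:strichartz estimate,different frequency} and \ref{lemma:strichartz estimate,different frequency,2} via a Littlewood-Paley decomposition in $\xi$. Since the two parts differ only in the scalar prefactor on $N$, while the dependence on the $\xi$-frequency $M$ is identical, I describe $(i)$ and note that $(ii)$ is the same computation. First I would write $\wt{f}_{0}=\sum_{M\geq 1}P_{M}^{\xi}\wt{f}_{0}$, apply the triangle inequality in $L_{t,x,\xi}^{p}$ (valid since $p\geq 2$), and invoke Proposition \ref{lemma:strichartz estimate,different frequency} on each dyadic block, with the convention that $M=1$ denotes the low-frequency piece $P_{\leq 1}^{\xi}$ which contributes only $O(1)$ in every $M$-weight. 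This yields
\begin{align*}
\n{P_{\leq N}^{x}e^{it\nabla_{\xi}\cdot\nabla_{x}}\wt{f}_{0}}_{L_{t,x,\xi}^{p}}\lesssim_{\ve}N^{\frac{d}{2}-\frac{d+1}{p}+\ve}\sum_{M\geq 1}\max\lr{1,MN^{-1}}^{\frac{1}{p}}M^{\frac{d}{2}-\frac{d+1}{p}}\n{P_{\leq N}^{x}P_{M}^{\xi}\wt{f}_{0}}_{L_{x,\xi}^{2}}.
\end{align*}

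Next I would apply Cauchy-Schwarz in $M$ against the weights $M^{s_{p}+\frac{1}{p}}$. The $\ell^{2}$-factor on $\n{P_{M}^{\xi}\wt{f}_{0}}_{L_{x,\xi}^{2}}$ assembles into $\n{P_{\leq N}^{x}\wt{f}_{0}}_{L_{x}^{2}H_{\xi}^{s_{p}+\frac{1}{p}}}$, while the remaining numerical factor is
\begin{align*}
\lrs{\sum_{M\geq 1}\max\lr{1,MN^{-1}}^{\frac{2}{p}}M^{-\frac{2}{p}-2\de}}^{\frac{1}{2}},\qquad \de:=s_{p}-\lrs{\frac{d}{2}-\frac{d+1}{p}}>0.
\end{align*}
The contribution from $M\leq N$ sums to $O(1)$ immediately; for $M>N$ the factor $(M/N)^{2/p}$ cancels $M^{-2/p}$, so the tail becomes $N^{-2/p}\sum_{M>N}M^{-2\de}\lesssim N^{-2/p-2\de}$, which is even smaller. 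Hence the numerical sum is $O(1)$, and the prefactor $N^{\frac{d}{2}-\frac{d+1}{p}+\ve}$ is absorbed into $N^{s_{p}}$ once $\ve$ is chosen smaller than $\de$. This proves $(i)$.

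For $(ii)$ I would repeat the same argument with Proposition \ref{lemma:strichartz estimate,different frequency,2} in place of Proposition \ref{lemma:strichartz estimate,different frequency}; this only replaces the outside prefactor $N^{\frac{d}{2}-\frac{d+1}{p}+\ve}$ by $N^{\frac{1}{p}+\ve}$, and because the $M$-dependence in \eqref{equ:strichartz,different frequency,p<p0} coincides with that in \eqref{equ:strichartz,different frequency}, the numerical $M$-sum is unchanged, yielding the claimed bound. The only mildly delicate step in the whole argument is the bookkeeping of the $\max\lr{1,MN^{-1}}^{1/p}$ factor in the $M>N$ regime, which is exactly where the strict gap $s_{p}>\frac{d}{2}-\frac{d+1}{p}$ is used to ensure convergence; there is no real obstacle beyond this.
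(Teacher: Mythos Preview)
Your proposal is correct and follows exactly the approach the paper indicates: the paper states only that the corollary follows from Propositions~\ref{lemma:strichartz estimate,different frequency}--\ref{lemma:strichartz estimate,different frequency,2} together with Littlewood--Paley theory, and your argument is precisely the natural way to fill in those details---a dyadic decomposition in the $\xi$-frequency $M$, triangle inequality, application of the frequency-separated Strichartz bounds, and a Cauchy--Schwarz summation in $M$ using the strict gap $s_{p}>\frac{d}{2}-\frac{d+1}{p}$ to close. The bookkeeping you identify for the factor $\max\{1,MN^{-1}\}^{1/p}$ in the regime $M>N$ is indeed the only nontrivial point, and you handle it correctly.
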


\section{Space-time Bilinear Estimates}\label{sec:Bilinear Estimates}

In this section, we establish the space-time bilinear estimates for the loss term and the gain term.
Taking the inverse $v$-variable Fourier transform on both side of \eqref{equ:Boltzmann}, we get
\begin{align}
i\pa_{t}\wt{f}+\nabla_{\xi}\cdot \nabla_{x}\wt{f}=i\mathcal{F}_{v\mapsto \xi}^{-1}\lrc{Q(f,f)}.
\end{align}
By the well-known Bobylev identity in a more general case, see for example \cite{alexandre2000entropy,CSZ24well,chen2023sharp,desvillettes2003use}, it holds that (up to an unimportant constant)
\begin{align}
\mathcal{F}_{v\mapsto \xi}^{-1}\lrc{Q^{-}(f,g)}(\xi)=& \n{\textbf{b}}_{L^{1}(\mathbb{S}^{d-1})}\int \frac{\wt{f}(\xi-\eta) \wt{g}(\eta)}{|\eta|^{d+\ga}}d\eta,\\
\mathcal{F}_{v\mapsto \xi}^{-1}\lrc{Q^{+}(f,g)}(\xi)=&\int_{\R^{d}\times \mathbb{S}^{d-1}}\frac{\wt{f}(\xi^{+}+\eta)\wt{g}(\xi^{-}-\eta)}{|\eta|^{d+\ga}}
\textbf{b}(\frac{\xi}{|\xi|}\cdot \omega)d\eta d\omega,
\end{align}
where $\xi^{+}=\frac{1}{2}\lrs{\xi+|\xi|\omega}$ and $\xi^{-}=\frac{1}{2}\lrs{\xi-|\xi|\omega}$.
For convenience, we take the notation that
\begin{align}\label{equ:Q,fourier}
\wt{Q}^{\pm}(\wt{f},\wt{g})=\mathcal{F}_{v\mapsto \xi}^{-1}\lrc{Q^{\pm}(f,g)}.
\end{align}
The following is the main proposition of the section.
\begin{proposition}[$L_{x}^{2}$ and $H_{x}^{s}$ bilinear estimates]
\label{lemma:bilinear estimates,loss,gain}
Let $d\geq 2$, $p_{0}=\frac{2(d+2)}{d}$, $r>\frac{d}{2}+\ga$, and
\begin{equation}
\left\{
\begin{aligned}
&s>\frac{d-1}{2}, \quad \ga\in [\frac{1-d}{2},0],\quad &\text{on $\R^{d}\times \R^{d}$},\\
&s>\frac{d}{2}-\frac{1}{p_{0}},\quad \ga\in [-\frac{d}{p_{0}},0] ,\quad& \text{on $\T^{d}\times \R^{d}$.}
\end{aligned}
\right.
\end{equation}
For $s_{1}\in[0,s]$, we have
\begin{align}
\n{\wt{Q}(U(t)\tilde{f}_{0}, U(t)\tilde{g}_{0})}_{L_{t}^{1} H_{x}^{s_{1}} H_{\xi}^{r}} \lesssim T^{\frac{1}{2}}\n{\wt{f}_{0}}_{H_{x}^{s_{1}}H_{\xi}^{r}}\n{\wt{g}_{0}}_{H_{x}^{s}H_{\xi}^{r}},\label{equ:Q,bilinear estimate,Hs,1}\\
\n{\wt{Q}(U(t)\tilde{f}_{0}, U(t)\tilde{g}_{0})}_{L_{t}^{1} H_{x}^{s_{1}} H_{\xi}^{r}} \lesssim T^{\frac{1}{2}}\n{\wt{f}_{0}}_{H_{x}^{s}H_{\xi}^{r}}\n{\wt{g}_{0}}_{H_{x}^{s_{1}}H_{\xi}^{r}},\label{equ:Q,bilinear estimate,Hs,2}
\end{align}
with $U(t)=e^{it\nabla_{\xi}\cdot \nabla_{x}}$.
\end{proposition}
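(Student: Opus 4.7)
The plan is to Littlewood--Paley decompose the two inputs in the spatial variable, $\wt{f}_0=\sum_{N_1}P_{N_1}^{x}\wt{f}_0$ and $\wt{g}_0=\sum_{N_2}P_{N_2}^{x}\wt{g}_0$, and to control each dyadic piece by a product of two Strichartz norms furnished by Section~\ref{sec:Strichartz Estimates}. H\"{o}lder in $t$ yields the factor $T^{1/2}$ and reduces us to bounding an $L_{t}^{2}H_{x}^{s_{1}}H_{\xi}^{r}$-norm of the bilinear expression; a further H\"{o}lder in $x$ splits this into two Strichartz quantities, one paid at regularity $s$, the other at $s_{1}$. On $\R^{d}\times\R^{d}$ I would use the endpoint pair $(q,p)=(2,\tfrac{2d}{d-1})$ from \eqref{equ:strichartz,R,thm}, giving per-factor loss $N_j^{(d-1)/2}$; on $\T^{d}\times\R^{d}$ I would use Proposition~\ref{lemma:strichartz estimate,different frequency} at $p=p_0$, together with Proposition~\ref{lemma:strichartz estimate,different frequency,2} for $p\in[p_0-\ve,p_0]$, whose per-factor loss is $N_j^{d/2-(d+1)/p_0+\ve}$. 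Both losses are exactly absorbed by the thresholds in \eqref{equ:index,thm}.

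\textbf{Handling the $\eta$-integral and velocity weights.} The representations \eqref{equ:Q,fourier} both contain the singular factor $|\eta|^{-(d+\ga)}$. I would dyadically localize $\eta$ and apply Cauchy--Schwarz, trading a controlled power of $|\eta|$ against the $H_{\xi}^{r}$ norm (equivalently, the $\langle v\rangle^{r}L_{v}^{2}$ weight in physical space); this is the unique place where $r>\frac{d}{2}+\ga$ is used, and it enforces the ranges $\ga\in[\frac{1-d}{2},0]$ on $\R^{d}$ and $\ga\in[-\frac{d}{p_0},0]$ on $\T^{d}$. For the gain term, the additional spherical integration is tamed by the Grad cutoff $|\mathbf{b}(\cos\theta)|\lesssim|\cos\theta|$, which provides the decay needed to average over $\omega\in\mathbb{S}^{d-1}$. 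In both cases the outcome is that $\wt{Q}^{\pm}(U(t)\wt{f}_0,U(t)\wt{g}_0)$ is majorized by sums of products of two ``linearized'' Strichartz objects to which Step~1 applies, in the spirit of the bilinear techniques of \cite{CH24well,CSZ24well,chen2023sharp}.

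\textbf{Loss vs.\ gain and the frequency orderings.} The loss term $\wt{Q}^{-}$ has a clean convolution structure in $\xi$ and becomes, in the physical-$v$ picture, the pointwise product $f\cdot(g\ast |v|^{\ga})$; by symmetry I may set $N_1\le N_2$ in \eqref{equ:Q,bilinear estimate,Hs,2} and $N_1\ge N_2$ in \eqref{equ:Q,bilinear estimate,Hs,1}, place the lower-regularity factor on the lower-frequency piece, and directly sum the dyadic series at the threshold regularity. The gain term $\wt{Q}^{+}$ is the non-trivial case: the post-collisional splitting $\xi^{\pm}=\tfrac{1}{2}(\xi\pm|\xi|\omega)$ couples $\wt{f}_0(\xi^{+}+\eta)$ with $\wt{g}_0(\xi^{-}-\eta)$ through $\omega$, destroys the swap symmetry of the inputs, and forces a separate analysis of both spatial-frequency orderings $N_1\lessgtr N_2$ for each of \eqref{equ:Q,bilinear estimate,Hs,1}--\eqref{equ:Q,bilinear estimate,Hs,2}. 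A careful frequency-support analysis after the Bobylev change of variables, exploiting $|\xi^{\pm}|\le|\xi|$, is used to route the derivative losses onto the high-frequency factor.

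\textbf{Main obstacle.} The hardest case will be the gain term on $\T^{d}\times\R^{d}$. There the $l^{2}$-decoupling Strichartz estimate is only sharp up to an $N^{\ve}$-loss, so matching that loss against the dyadic summation while keeping the asymmetric regularity split $(s_{1},s)$ across two factors with the coupled arguments $\xi^{\pm}\pm\eta$ is a tight accounting. This is precisely what pins down the threshold $s>\frac{d}{2}-\frac{1}{p_0}$ in \eqref{equ:index,thm} and makes the simultaneous use of \eqref{equ:strichartz,different frequency} and \eqref{equ:strichartz,different frequency,p<p0} (for $p\in[p_0-\ve,p_0]$) unavoidable. The analogue of this bookkeeping on $\R^{d}$ is considerably easier because the full $L^{2}_{t}$-endpoint is available, which is why the two sub-cases will be treated separately in Sections~\ref{sec:Bilinear Estimates for the Loss Term} and~\ref{sec:Bilinear Estimates for the Gain Term}.
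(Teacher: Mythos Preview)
Your broad outline—Littlewood--Paley in $x$, Strichartz, then absorption of the $|\eta|^{-(d+\ga)}$ singularity by velocity regularity—is in the right spirit, but several concrete pieces of the paper's argument are missing or misattributed.

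First, the paper does \emph{not} place both factors in Strichartz norms. It argues by duality, pairing $\wt{Q}^{\pm}$ against a test function $h$ and performing a \emph{three}-fold spatial decomposition $P_{N_1}^x\wt{f}\cdot P_{N_2}^x\wt{g}\cdot P_{N_3}^x h$. One input stays in the energy norm $L_t^\infty L_{x}^2 L_\xi^2$, the other is put in $L_t^p L_x^\infty W_\xi^{r_p,p}$ via Bernstein plus a \emph{single} application of Strichartz (Corollary~\ref{lemma:regularity strichartz}), and $h$ sits in $L_t^2 H_x^{-s_1}$. The split $L_t^\infty \cdot L_t^p \cdot L_t^2 \to L_t^1$ (after a harmless $T^{1/2-1/p}$) is what closes; your ``two Strichartz quantities'' do not pair cleanly with the $L_\xi^2$/$L_\xi^p$ asymmetry coming out of Lemmas~\ref{lemma:Q-,bilinear estimate} and~\ref{lemma:Q+,bilinear estimate}.

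Second, and more seriously, your two-frequency scheme misses the high--high $\to$ low interaction (the paper's Case~C, $N_2\sim N_3\ge N_1$). There the Strichartz/Bernstein loss would naively fall on the high frequency $N_2$, which is not summable against the available $N_1^{s}$. The paper fixes this by a further decomposition of the $N_2$- and $N_3$-annuli into cubes of side $N_1$ and applies Strichartz with \emph{noncentered} frequency localization cube by cube, so that the loss is $N_1^{d/p+1/p+\ve}$ rather than $N_2^{d/p+1/p+\ve}$; orthogonality in $\ell^2_D$ then resums. This cube trick is an essential step that your proposal does not supply.

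Two smaller corrections. (i)~The loss operator $\wt{Q}^-(f,g)$, i.e.\ $f\cdot(g*|\,\cdot\,|^\ga)$ in physical velocity, is \emph{not} symmetric in $(f,g)$, so you cannot fix a frequency ordering ``by symmetry''; all three orderings of $(N_1,N_2,N_3)$ must be treated for each of \eqref{equ:Q,bilinear estimate,Hs,1}--\eqref{equ:Q,bilinear estimate,Hs,2}. (ii)~You have the roles of Propositions~\ref{lemma:strichartz estimate,different frequency} and~\ref{lemma:strichartz estimate,different frequency,2} reversed: it is the \emph{loss} term that forces $p<p_0$, because Lemma~\ref{lemma:Q-,bilinear estimate} requires the strict inequality $\frac{d}{p}+\ga>0$; for the gain term Lemma~\ref{lemma:Q+,bilinear estimate} allows $\frac{d}{p}+\ga\ge 0$ and the paper works at $p\ge p_0$. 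The gain term instead requires an additional Littlewood--Paley decomposition in the $\xi$-variable (frequencies $M_1,M_2,M_3$), together with the support observation \eqref{equ:property,constraint,projector,v,variable} that $M_3\lesssim\max(M_1,M_2)$, to close the $H_\xi^r$ bookkeeping---this ingredient is absent from your sketch.
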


Estimates \eqref{equ:Q,bilinear estimate,Hs,1} and \eqref{equ:Q,bilinear estimate,Hs,2} exhibit symmetry, despite the nonsymmetric nature of the loss and gain terms. This symmetry is crucial for the iterative estimates established in Section \ref{sec:Iterative Estimates}. Notably, when $s_{1}=s$, they are closed and could be used for local well-posedness theories. When $s=0$, they are necessary for addressing the problem of unconditional uniqueness in the hierarchy method.

We focus primarily on the proof of the more difficult $\T^{d}\times \R^{d}$ case. The $\R^{d}\times \R^{d}$ case, which allows the application of the endpoint $L_{t}^{2}L_{x,\xi}^{\frac{2d}{d-1}}$ Strichartz estimate, can be handled similarly.
In Section \ref{sec:Bilinear Estimates for the Loss Term}, we establish the space-time bilinear estimates for the loss term, while the corresponding estimates for the gain term are proven in Section \ref{sec:Bilinear Estimates for the Gain Term}.

\subsection{Bilinear Estimates for the Loss Term}\label{sec:Bilinear Estimates for the Loss Term}
The goal of the section is to prove the bilinear estimates for the loss term
\begin{align}
\n{\wt{Q}^{-}(U(t)\tilde{f}_{0}, U(t)\tilde{g}_{0})}_{L_{t}^{1} H_{x}^{s_{1}} H_{\xi}^{r}} \lesssim T^{\frac{1}{2}}\n{\wt{f}_{0}}_{H_{x}^{s_{1}}H_{\xi}^{r}}
\n{\wt{g}_{0}}_{H_{x}^{s}H_{\xi}^{r}},\label{equ:Q-,bilinear estimate,Hs,1}\\
\n{\wt{Q}^{-}(U(t)\tilde{f}_{0}, U(t)\tilde{g}_{0})}_{L_{t}^{1} H_{x}^{s_{1}} H_{\xi}^{r}} \lesssim T^{\frac{1}{2}}\n{\wt{f}_{0}}_{H_{x}^{s}H_{\xi}^{r}}
\n{\wt{g}_{0}}_{H_{x}^{s_{1}}H_{\xi}^{r}}.\label{equ:Q-,bilinear estimate,Hs,2}
\end{align}

We first provide a time-independent bilinear estimate for the loss term.
\begin{lemma}\label{lemma:Q-,bilinear estimate}
There holds that
\begin{align}\label{equ:Q-,bilinear estimate,f,g}
\n{\wt{Q}^{-}(\wt{f},\wt{g})}_{L_{\xi}^{2}}
\lesssim& \n{\wt{f}}_{L_{\xi}^{2}}
\n{\wt{g}}_{W^{r_{p},p}},
\end{align}
provided that $r_{p}>\frac{d}{p}+\ga>0$.
\end{lemma}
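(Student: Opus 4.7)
The plan is to recognize $\wt{Q}^{-}(\wt{f},\wt{g})$ as the convolution in the $\xi$-variable of $\wt{f}$ with the function $K\wt{g}$, where $K(\eta)=|\eta|^{-(d+\ga)}$, and to peel off $\wt{f}$ using Young's convolution inequality. From the given formula $\wt{Q}^{-}(\wt{f},\wt{g})(\xi)=\n{\textbf{b}}_{L^{1}(\mathbb{S}^{d-1})}(\wt{f}\ast(K\wt{g}))(\xi)$, Young's inequality in the form $L^{2}\ast L^{1}\to L^{2}$ gives
\begin{align*}
\n{\wt{Q}^{-}(\wt{f},\wt{g})}_{L^{2}_{\xi}}\lesssim \n{\wt{f}}_{L^{2}_{\xi}}\int_{\R^{d}}\frac{|\wt{g}(\eta)|}{|\eta|^{d+\ga}}\,d\eta,
\end{align*}
so the lemma reduces to the scalar bound $\int|\eta|^{-(d+\ga)}|\wt{g}(\eta)|\,d\eta\lesssim \n{\wt{g}}_{W^{r_{p},p}}$.

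To prove this scalar bound I would split $\R^{d}=\{|\eta|\le 1\}\cup\{|\eta|\ge 1\}$ and use H{\"o}lder's inequality tuned to the kernel's behavior in each region. On $|\eta|\ge 1$, H{\"o}lder with conjugate pair $(p',p)$ gives
\begin{align*}
\int_{|\eta|\ge 1}\frac{|\wt{g}|}{|\eta|^{d+\ga}}\,d\eta\le \n{|\eta|^{-(d+\ga)}}_{L^{p'}(|\eta|\ge 1)}\n{\wt{g}}_{L^{p}},
\end{align*}
where the $L^{p'}$-norm at infinity is finite iff $(d+\ga)p'>d$, i.e.\ $d/p+\ga>0$, which is part of the hypothesis; the $L^{p}$-norm of $\wt{g}$ is absorbed by the trivial embedding $W^{r_{p},p}\hookrightarrow L^{p}$. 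On $|\eta|\le 1$, since $\ga<0$ the kernel is locally integrable, and I would apply H{\"o}lder with a pair $(q',q)$ in which $q$ is chosen just above $d/|\ga|$ so that $|\eta|^{-(d+\ga)}\in L^{q'}(B_{1})$, and then invoke Sobolev embedding $W^{r_{p},p}\hookrightarrow L^{q}$ at $1/q=1/p-r_{p}/d$ (or $L^{\infty}$ if $r_{p}>d/p$). Matching $q>d/|\ga|$ against this Sobolev exponent yields precisely $r_{p}>d/p+\ga$, which is the sharp hypothesis.

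The main technical obstacle is making the two exponent constraints on the low-frequency side meet exactly at the sharp threshold $r_{p}=d/p+\ga$: the local integrability of $|\eta|^{-(d+\ga)}$ on $B_{1}$ forces $q>d/|\ga|$, while the Sobolev embedding $W^{r_{p},p}\hookrightarrow L^{q}$ requires $r_{p}\ge d/p-d/q$, and it is exactly at the critical $q=d/|\ga|$ that these two conditions simultaneously give $r_{p}=d/p+\ga$. The hypothesis $d/p+\ga>0$ plays a dual role: on the high-frequency side it provides the kernel decay needed for $L^{p'}$-integrability at infinity, while on the low-frequency side it keeps the Sobolev exponent $1/q=1/p-r_{p}/d$ in the admissible range so that the embedding is applicable. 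No finer device (Stein--Weiss, Hardy, or Littlewood--Paley square-function estimate) seems to be required; the sharpness of the hypothesis is precisely the sharpness of the classical H{\"o}lder--Sobolev pairing.
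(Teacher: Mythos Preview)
Your proposal is correct and follows essentially the same route as the paper: apply Young's inequality $L^{2}\ast L^{1}\to L^{2}$ to peel off $\wt{f}$, then control $\displaystyle\int |\eta|^{-(d+\ga)}|\wt{g}(\eta)|\,d\eta$ by splitting the kernel and applying H\"older plus Sobolev. The only cosmetic difference is the bookkeeping of that second step: the paper writes the splitting as an interpolation inequality $\n{\wt{g}}_{L^{p_{1}}}^{\theta}\n{\wt{g}}_{L^{p_{2}}}^{1-\theta}$ with $p_{1}<\tfrac{d}{-\ga}<p_{2}$ both chosen close to $\tfrac{d}{-\ga}$ and then embeds $W^{r_{p},p}\hookrightarrow L^{p_{1}},L^{p_{2}}$, whereas you split explicitly at $|\eta|=1$ and use the pairs $(p',p)$ at infinity and $(q',q)$ near the origin. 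Both choices lead to exactly the same exponent constraints $r_{p}>\tfrac{d}{p}+\ga>0$, so neither buys more than the other.
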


\begin{proof}
By Young's inequality and interpolation inequality, we have
\begin{align}\label{equ:Q-,bilinear estimate,f,g,step1}
\n{\wt{Q}^{-}(\wt{f},\wt{g})}_{L_{\xi}^{2}}\lesssim& \n{\wt{f}}_{L_{\xi}^{2}}
\bbn{\frac{\wt{g}(\xi)}{|\xi|^{d+\ga}}}_{L_{\xi}^{1}}
\lesssim\n{\wt{f}}_{L_{\xi}^{2}}
\n{\wt{g}}_{L^{p_{1}}}^{\theta}\n{\wt{g}}_{L^{p_{2}}}^{1-\theta},
\end{align}
where $1 \leq p_{1}<\frac{d}{-\gamma}<p_{2}\leq \infty$ and the weight index $\theta$ depends on $(d,\ga,p_{1},p_{2})$.
Given the constrain that $r_{p}>\frac{d}{p}+\ga>0$, we select $p_{1}$ and $p_{2}$ sufficiently close to $\frac{d}{-\ga}$ and apply the Sobolev inequality to obtain
\begin{align*}
\n{\wt{g}}_{L^{p_{1}}}\lesssim \n{\wt{g}}_{W^{r_{p},p}},\quad \n{\wt{g}}_{L^{p_{2}}}\lesssim \n{\wt{g}}_{W^{r_{p},p}},
\end{align*}
which together with \eqref{equ:Q-,bilinear estimate,f,g,step1} yields the desired estimate \eqref{equ:Q-,bilinear estimate,f,g}.
\end{proof}

The constraint in Theorem \ref{thm:main theorem} that $\ga\in [-\frac{d}{p_{0}},0]$ is partly due to the restriction $\frac{d}{p}+\ga>0$. Note that this restriction is a strict inequality, in contrast with Lemma \ref{lemma:Q+,bilinear estimate} for the gain term where it is permissible for $r_{p}\geq \frac{d}{p}+\ga\geq 0$. Consequently, for the space-time bilinear estimates for the loss term, Strichartz estimates \eqref{equ:strichartz,different frequency,p<p0} with the range $p\in [p_{0}-\ve,p_{0})$ are required to address this issue.

\begin{proof}[\textbf{Proof of Proposition $\ref{lemma:bilinear estimates,loss,gain}$ for the loss term}]
We provide the proof for estimate \eqref{equ:Q-,bilinear estimate,Hs,1}, as the proof for estimate \eqref{equ:Q-,bilinear estimate,Hs,2} follows by a similar argument.

By duality,
\eqref{equ:Q-,bilinear estimate,Hs,1} is equivalent to
\begin{align}\label{equ:Q-,bilinear estimate,L2,equivalent}
\int \lrs{\lra{\nabla_{\xi}}^{r}\wt{Q}^{-}(U(t)\wt{f}_{0},U(t)\wt{g}_{0})} h dx d\xi dt\lesssim
T^{\frac{1}{2}}\n{\wt{f}_{0}}_{H_{x}^{s}H_{\xi}^{r}}\n{\wt{g}_{0}}_{H_{x}^{s_{1}}H_{\xi}^{r}} \n{h}_{L_{t}^{\infty}H_{x}^{-s}L_{\xi}^{2}} .
\end{align}
We denote by $I$ the integral in \eqref{equ:Q-,bilinear estimate,L2,equivalent}.
Inserting a Littlewood-Paley decomposition gives that
\begin{align*}
I=\sum_{N_{1},N_{2},N_{3}} I_{N_{1},N_{2},N_{3}}
\end{align*}
where
\begin{align*}
I_{N_{1},N_{2},N_{3}}=& \int \wt{Q}^{-}(P_{N_{1}}^{x}\lra{\nabla_{\xi}}^{r}\wt{f},P_{N_{2}}^{x}\wt{g}) P_{N_{3}}^{x}h dx d\xi dt
\end{align*}
with $\wt{f}=U(t)\wt{f}_{0}$ and $\wt{g}=U(t)\wt{g}_{0}$.

Since $\wt{Q}^{-}$ commutes with $P_{N_{3}}^{x}$, the top two frequencies
are comparable in size. Hence, we divide the sum into three cases as follows

Case A. $N_{1}\sim N_{2}\geq N_{3}$.

Case B. $N_{1}\sim N_{3}\geq N_{2}$.

Case C. $N_{2}\sim N_{3}\geq N_{1}$.

\bigskip

\textbf{Case A. $N_{1}\sim N_{2}\geq N_{3}$.}

Let $I_{A}$ denote the integral restricted to the Case A.
By Cauchy-Schwarz,
\begin{align*}
I_{A}\lesssim
 \sum_{N_{1}\sim N_{2}\geq N_{3}}\int
\n{\wt{Q}^{-}(P_{N_{1}}^{x}\lra{\nabla_{\xi}}^{r}\wt{f}, P_{N_{2}}^{x}\wt{g})}_{L_{x,\xi}^{2}}
\n{P_{N_{3}}^{x}h}_{L_{x,\xi}^{2}}dt.
\end{align*}
By using the time-independent estimate \eqref{equ:Q-,bilinear estimate,f,g} in Lemma \ref{lemma:Q-,bilinear estimate} and then H\"{o}lder inequality, we have
\begin{align*}
I_{A} \lesssim & \sum_{N_{1}\sim N_{2}\geq N_{3}}
\int \bbn{\n{P_{N_{1}}^{x}\lra{\nabla_{\xi}}^{r}\wt{f}}_{L_{\xi}^{2}}
\n{P_{N_{2}}^{x}\widetilde{g}}_{W_{\xi}^{r_{p},p}}}_{L_{x}^{2}}
\n{P_{N_{3}}^{x}h}_{L_{x,\xi}^{2}}dt \\
\leq& \sum_{N_{1}\sim N_{2}\geq N_{3}}
 \int \n{P_{N_{1}}^{x}\lra{\nabla_{\xi}}^{r}\wt{f}}_{L_{x}^{2}L_{\xi}^{2}}
 \n{P_{N_{2}}^{x}\widetilde{g}}_{L_{x}^{\infty}W_{\xi}^{r_{p},p}}
 \n{P_{N_{3}}^{x}h}_{L_{x,\xi}^{2}} dt,\\
 \leq& \sum_{N_{1}\sim N_{2}\geq N_{3}}
 \n{P_{N_{1}}^{x}\lra{\nabla_{\xi}}^{r}\wt{f}}_{L_{t}^{\infty}L_{x}^{2}L_{\xi}^{2}}
 \n{P_{N_{2}}^{x}\widetilde{g}}_{L_{t}^{p}L_{x}^{\infty}W_{\xi}^{r_{p},p}}
 \n{P_{N_{3}}^{x}h}_{L_{t}^{2}L_{x,\xi}^{2}} ,
\end{align*}
where in the last inequality we have discarded the unimportant factor $T^{\frac{1}{2}-\frac{1}{p}}$.
By Bernstein inequality that
\begin{align*}
\n{P_{N_{2}}^{x}\lra{\nabla_{\xi}}^{r_{p}}\wt{g}}_{L_{x}^{\infty}}\lesssim N_{2}^{\frac{d}{p}} \n{P_{N_{2}}^{x}\lra{\nabla_{\xi}}^{r_{p}}\wt{g}}_{L_{x}^{p}},
\end{align*}
we obtain
\begin{align*}
I_{A}\lesssim& \sum_{N_{1}\sim N_{2}\geq N_{3}}N_{2}^{\frac{d}{p}}
 \n{P_{N_{1}}^{x}\lra{\nabla_{\xi}}^{r}\wt{f}}_{L_{t}^{\infty}L_{x}^{2}L_{\xi}^{2}}
\n{P_{N_{2}}^{x}\lra{\nabla_{\xi}}^{r_{p}}\wt{g}}_{L_{t,x,\xi}^{p}} \n{P_{N_{3}}^{x}h}_{L_{t}^{2}L_{x,\xi}^{2}},
\end{align*}
By Strichartz estimate \eqref{equ:strichartz estimate,regularity,p>2} in Corollary \ref{lemma:regularity strichartz} with $p<p_{0}$, we have
\begin{align*}
I_{A}
 \lesssim& \sum_{N_{1}\sim N_{2}\geq N_{3}}N_{2}^{\frac{d}{p}}N_{2}^{\frac{1}{p}+\ve}
 \n{P_{N_{1}}^{x}\lra{\nabla_{\xi}}^{r}\wt{f}}_{L_{t}^{\infty}L_{x}^{2}L_{\xi}^{2}}
\n{P_{N_{2}}^{x}\lra{\nabla_{\xi}}^{r_{p}+s_{p}+\frac{1}{p}}\wt{g}_{0}}_{L_{x,\xi}^{2}}
 \n{P_{N_{3}}^{x}h}_{L_{t}^{2}L_{x,\xi}^{2}}.
\end{align*}
Noticing that the range
\begin{align} \label{equ:range,parameters}
s>\frac{d}{2}-\frac{1}{p_{0}}=\frac{d+1}{p_{0}},\quad r>\frac{d}{2}+\ga,\quad s_{p}>\frac{d}{2}-\frac{d+1}{p},\quad r_{p}>\frac{d}{p}+\ga>0,
\end{align}
for fixed $(s,r)$, we can choose $p=p_{0}-\ve$ with $\ve$ sufficiently small and appropriate parameters $(s_{p},r_{p})$ such that
$$s>\frac{d}{p_{0}-\ve}+\frac{1}{p_{0}-\ve}+\ve, \quad r>r_{p}+s_{p}+\frac{1}{p}.$$
Hence, we arrive at
\begin{align*}
 I_{A}\lesssim&\sum_{N_{1}\sim N_{2}\geq N_{3}}N_{2}^{s}
 \n{P_{N_{1}}^{x}\lra{\nabla_{\xi}}^{r}\wt{f}_{0}}_{L_{x}^{2}L_{\xi}^{2}}
\n{P_{N_{2}}^{x}\lra{\nabla_{\xi}}^{r}\wt{g}_{0}}_{L_{x,\xi}^{2}}
 \n{P_{N_{3}}^{x}h}_{L_{t}^{2}L_{x,\xi}^{2}}.
\end{align*}
Using Bernstein inequality, carrying out the sum in $N_{3}$ and applying Cauchy-Schwarz inequality in $(N_{1},N_{2})$, we have
\begin{align}\label{equ:Q-,A,final estimate}
I_{A}\lesssim& \sum_{N_{1}\sim N_{2}\geq N_{3}}N_{1}^{-s}N_{2}^{s-s_{1}}N_{3}^{s_{1}}
 \n{P_{N_{1}}^{x}\wt{f}_{0}}_{H_{x}^{s}H_{\xi}^{r}}
\n{P_{N_{2}}^{x}\wt{g}_{0}}_{H_{x}^{s_{1}}H_{\xi}^{r}}
 \n{P_{N_{3}}^{x}h}_{L_{t}^{2}H_{x}^{-s_{1}}L_{\xi}^{2}}\\
 \lesssim&\sum_{N_{1}\sim N_{2}}N_{1}^{-s}N_{2}^{s}
 \n{P_{N_{1}}^{x}\wt{f}_{0}}_{H_{x}^{s}H_{\xi}^{r}}
\n{P_{N_{2}}^{x}\wt{g}_{0}}_{H_{x}^{s_{1}}H_{\xi}^{r}}
 \n{h}_{L_{t}^{2}H_{x}^{-s_{1}}L_{\xi}^{2}}\notag\\
 \lesssim& \n{\wt{f}_{0}}_{H_{x}^{s}H_{\xi}^{r}}
\n{\wt{g}_{0}}_{H_{x}^{s_{1}}H_{\xi}^{r}}
 \n{h}_{L_{t}^{2}H_{x}^{-s_{1}}L_{\xi}^{2}}\notag\\
\lesssim& T^{\frac{1}{2}}\n{\wt{f}_{0}}_{H_{x}^{s}H_{\xi}^{r}}
\n{\wt{g}_{0}}_{H_{x}^{s_{1}}H_{\xi}^{r}}
 \n{h}_{L_{t}^{\infty}H_{x}^{-s_{1}}L_{\xi}^{2}}.\notag
\end{align}
This completes the proof of \eqref{equ:Q-,bilinear estimate,L2,equivalent} for Case A.

\bigskip

\textbf{Case B. $N_{1}\sim N_{3}\geq N_{2}$.}

Using the same process as applied to estimate \eqref{equ:Q-,A,final estimate} in Case A, we arrive at
\begin{align*}
I_{B} \lesssim& \sum_{N_{1}\sim N_{3}\geq N_{2}}N_{1}^{-s}N_{2}^{s-s_{1}}N_{3}^{s_{1}}
 \n{P_{N_{1}}^{x}\wt{f}_{0}}_{H_{x}^{s}H_{\xi}^{r}}
\n{P_{N_{2}}^{x}\wt{g}_{0}}_{H_{x}^{s_{1}}H_{\xi}^{r}}
 \n{P_{N_{3}}^{x}h}_{L_{t}^{2}H_{x}^{-s_{1}}L_{\xi}^{2}}\\
 \lesssim &\sum_{N_{1}\sim N_{3}}N_{1}^{-s}N_{3}^{s}
 \n{P_{N_{1}}^{x}\wt{f}_{0}}_{H_{x}^{s}H_{\xi}^{r}}
\n{\wt{g}_{0}}_{H_{x}^{s_{1}}H_{\xi}^{r}}
 \n{P_{N_{3}}^{x}h}_{L_{t}^{2}H_{x}^{-s_{1}}L_{\xi}^{2}}\\
 \lesssim&\n{\wt{f}_{0}}_{H_{x}^{s}H_{\xi}^{r}}
\n{\wt{g}_{0}}_{H_{x}^{s_{1}}H_{\xi}^{r}}
 \n{h}_{L_{t}^{2}H_{x}^{-s_{1}}L_{\xi}^{2}}\\
 \lesssim& T^{\frac{1}{2}}\n{\wt{f}_{0}}_{H_{x}^{s}H_{\xi}^{r}}
\n{\wt{g}_{0}}_{H_{x}^{s_{1}}H_{\xi}^{r}}
 \n{h}_{L_{t}^{\infty}H_{x}^{-s_{1}}L_{\xi}^{2}},
\end{align*}
which completes the proof of \eqref{equ:Q-,bilinear estimate,L2,equivalent} for Case B.

\bigskip
\textbf{Case C. $N_{2}\sim N_{3}\geq N_{1}$.}

It should be noted that the strategy employed in Cases A and B is not applicable in the current Case C, for which more detailed frequency analysis is required.
Let $I_{C}$ denote the integral restricted to the Case C.
Let $k_{1}$, $k_{2}$, and $k_{3}$ denote the Fourier coefficients corresponding to the
spatial variables $x_{1}$, $x_{2}$, and $x_{3}$, respectively. Decompose the $N_{2}$ and $N_{3}$ dyadic spaces
into $N_{1}$ size cubes. Due to the frequency constraint $k_{1}+k_{2}+k_{3}=0$, for each
choice $D$ of an $N_{1}$-cube within the $k_{2}$ space, the variable $k_{3}$ is
constrained to at most $10^{d}$ of $N_{1}$-cubes dividing the $N_{3}$ dyadic space.
For convenience, we will refer to these $10^{d}$ cubes as a single cube $D_{c}$ that
corresponds to $D$. Then we have
\begin{align*}
I_{C}=
\sum_{N_{2}\sim N_{3}\geq N_{1}}\sum_{D}
\int  \wt{Q}^{-}(P_{N_{1}}^{x}\lra{\nabla_{\xi}}^{r}\wt{f},P_{D}^{x}P_{N_{2}}^{x} \wt{g})
P_{D^{c}}^{x}P_{N_{3}}^{x}h dtdxd\xi.
\end{align*}
By Cauchy-Schwarz,
\begin{align*}
I_{C}\lesssim
\int \sum_{N_{2}\sim N_{3}\geq N_{1}}
\n{\wt{Q}^{-}(P_{N_{1}}^{x}\lra{\nabla_{\xi}}^{r}\wt{f},P_{D}^{x} P_{N_{2}}^{x}\wt{g})}_{l_{D}^{2}L_{x,\xi}^{2}}
\n{P_{D^{c}}^{x}P_{N_{3}}^{x}h}_{l_{D}^{2}L_{x,\xi}^{2}}dt.
\end{align*}
By using estimate \eqref{equ:Q-,bilinear estimate,f,g} in Lemma \ref{lemma:Q-,bilinear estimate} and then H\"{o}lder inequality, we have
\begin{align*}
I_{C} \lesssim &\int \sum_{N_{2}\sim N_{3}\geq N_{1}}
 \bbn{\n{P_{N_{1}}^{x}\lra{\nabla_{\xi}}^{r}\wt{f}}_{L_{\xi}^{2}}
\n{P_{D}^{x}P_{N_{2}}^{x}\widetilde{g}}_{W_{\xi}^{r_{p},p}}}_{l_{D}^{2}L_{x}^{2}}
\n{P_{D^{c}}^{x}P_{N_{3}}^{x}h}_{l_{D}^{2}L_{x,\xi}^{2}}dt \\
\leq& \sum_{N_{2}\sim N_{3}\geq N_{1}}
 \n{P_{N_{1}}^{x}\lra{\nabla_{\xi}}^{r}\wt{f}}_{L_{t}^{\infty}L_{x}^{2}L_{\xi}^{2}}
 \n{P_{D}^{x}P_{N_{2}}^{x}\widetilde{g}}_{L_{t}^{p}l_{D}^{2}L_{x}^{\infty}W_{\xi}^{r_{p},p}}
 \n{P_{D^{c}}^{x}P_{N_{3}}^{x}h}_{L_{t}^{2}l_{D}^{2}L_{x,\xi}^{2}} .
\end{align*}
By Minkowski inequality, Bernstein inequality, and Strichartz estimate \eqref{equ:strichartz estimate,regularity,p>2} with noncentered frequency localization,
we obtain
\begin{align*}
I_{C}\lesssim& \sum_{N_{2}\sim N_{3}\geq N_{1}}N_{1}^{\frac{d}{p}}
 \n{P_{N_{1}}^{x}\lra{\nabla_{\xi}}^{r}\wt{f}}_{L_{t}^{\infty}L_{x}^{2}L_{\xi}^{2}}
\n{P_{D}^{x}P_{N_{2}}^{x}\lra{\nabla_{\xi}}^{r_{p}}\wt{g}}_{l_{D}^{2}L_{t,x,\xi}^{p}}
 \n{P_{N_{3}}^{x}h}_{L_{t}^{2}L_{x,\xi}^{2}}\\
 \lesssim& \sum_{N_{2}\sim N_{3}\geq N_{1}}N_{1}^{\frac{d}{p}}N_{1}^{\frac{1}{p}+\ve}
 \n{P_{N_{1}}^{x}\lra{\nabla_{\xi}}^{r}\wt{f}}_{L_{t}^{\infty}L_{x}^{2}L_{\xi}^{2}}
\n{P_{D}^{x}P_{N_{2}}^{x}\lra{\nabla_{\xi}}^{r_{p}+s_{p}+\frac{1}{p}}\wt{g}_{0}}_{l_{D}^{2}L_{x,\xi}^{2}}
 \n{P_{N_{3}}^{x}h}_{L_{t}^{2}L_{x,\xi}^{2}}.
 \end{align*}
In the same way as \eqref{equ:range,parameters}, by choosing appropriate parameters $(p,s_{p},r_{p})$ such that
$$s>s_{p}+\frac{d}{p}, \quad r>r_{p}+s_{p}+\frac{1}{p},$$
we arrive at
 \begin{align*}
 I_{C}\lesssim &\sum_{N_{2}\sim N_{3}} \n{\lra{\nabla_{\xi}}^{r}\wt{f}_{0}}_{H_{x}^{s}L_{\xi}^{2}}
 \n{P_{N_{2}}^{x}\lra{\nabla_{\xi}}^{r}\wt{g}_{0}}_{L_{x,\xi}^{2}}
 \n{P_{N_{3}}^{x}h}_{L_{t}^{2}L_{x,\xi}^{2}}\\
 \lesssim&\n{\wt{f}_{0}}_{H_{x}^{s}H_{\xi}^{r}}
\n{\wt{g}_{0}}_{H_{x}^{s_{1}}H_{\xi}^{r}}
 \n{h}_{L_{t}^{2}H_{x}^{-s_{1}}L_{\xi}^{2}}\\
 \lesssim& T^{\frac{1}{2}}\n{\wt{f}_{0}}_{H_{x}^{s}H_{\xi}^{r}}
\n{\wt{g}_{0}}_{H_{x}^{s_{1}}H_{\xi}^{r}}
 \n{h}_{L_{t}^{\infty}H_{x}^{-s_{1}}L_{\xi}^{2}},
\end{align*}
which completes the proof of \eqref{equ:Q-,bilinear estimate,L2,equivalent} for Case C.
 \end{proof}

 \subsection{Bilinear Estimates for the Gain Term}\label{sec:Bilinear Estimates for the Gain Term}
 The goal of the section is to prove the bilinear estimates for the gain term
 \begin{align}
\n{\wt{Q}^{+}(U(t)\tilde{f}_{0}, U(t)\tilde{g}_{0})}_{L_{t}^{1} H_{x}^{s_{1}} H_{\xi}^{r}} \lesssim T^{\frac{1}{2}}\n{\wt{f}_{0}}_{H_{x}^{s_{1}}H_{\xi}^{r}}
\n{\wt{g}_{0}}_{H_{x}^{s}H_{\xi}^{r}}\label{equ:Q+,bilinear estimate,Hs,1}\\
\n{\wt{Q}^{+}(U(t)\tilde{f}_{0}, U(t)\tilde{g}_{0})}_{L_{t}^{1} H_{x}^{s_{1}} H_{\xi}^{r}} \lesssim T^{\frac{1}{2}}\n{\wt{f}_{0}}_{H_{x}^{s}H_{\xi}^{r}}
\n{\wt{g}_{0}}_{H_{x}^{s_{1}}H_{\xi}^{r}}\label{equ:Q+,bilinear estimate,Hs,2}
\end{align}

 Before proving \eqref{equ:Q+,bilinear estimate,Hs,1}--\eqref{equ:Q+,bilinear estimate,Hs,2}, we first provide time-independent estimates as follows.
\begin{lemma}\label{lemma:Q+,bilinear estimate}
Let $r_{p}\geq \frac{d}{p}+\ga\geq 0$. Then we have
  \begin{align}
\n{\wt{Q}^{+}(\wt{f},\wt{g})}_{L_{\xi}^{2}}
\lesssim& \n{\wt{f}}_{L_{\xi}^{2}}
\n{\wt{g}}_{W_{\xi}^{r_{p},p}}, \label{equ:Q+,bilinear estimate,PM,f,g}\\
\n{\wt{Q}^{+}(\wt{f},\wt{g})}_{L_{\xi}^{2}}
\lesssim&  \n{\wt{f}}_{W_{\xi}^{r_{p},p}}
\n{\wt{g}}_{L_{\xi}^{2}}.\label{equ:Q+,bilinear estimate,PM,g,f}
\end{align}
\end{lemma}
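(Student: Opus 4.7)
The plan is to adapt the convolution-based argument of Lemma \ref{lemma:Q-,bilinear estimate} to the gain term, handling the extra angular integration over $\omega\in\mathbb{S}^{d-1}$ via Minkowski's inequality and the twisted shifts $\xi^{\pm}$ through a unit-Jacobian change of variables. Estimate \eqref{equ:Q+,bilinear estimate,PM,g,f} will follow from \eqref{equ:Q+,bilinear estimate,PM,f,g} by the symmetry $(\omega,\eta)\mapsto (-\omega,-\eta)$, which exchanges $\xi^{+}\leftrightarrow \xi^{-}$ and thereby swaps the roles of $\wt{f}$ and $\wt{g}$, while preserving $|\eta|$ and the pointwise bound on $|\mathbf{b}|$.

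For \eqref{equ:Q+,bilinear estimate,PM,f,g}, I first use $|\mathbf{b}(\hat\xi\cdot\omega)|\le C$ and Minkowski's inequality in $\omega$ to reduce to showing, uniformly in $\omega\in\mathbb{S}^{d-1}$,
\begin{equation*}
\|T_\omega(\wt{f},\wt{g})\|_{L^2_\xi}\lesssim \|\wt{f}\|_{L^2}\|\wt{g}\|_{W^{r_p,p}},\qquad T_\omega(\wt{f},\wt{g})(\xi)=\int \wt{f}(\xi^{+}+\eta)\wt{g}(\xi^{-}-\eta)|\eta|^{-d-\ga}\,d\eta.
\end{equation*}
For each $\omega$, I perform the change of variables $(\xi,\eta)\mapsto (u,v):=(\xi^{+}+\eta,\xi^{-}-\eta)$, whose block-matrix Jacobian has absolute value $1$ and yields $\xi=u+v$ together with $\eta=\tfrac{1}{2}(u-v-|u+v|\omega)$. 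Dualizing against $h\in L^2_\xi$ then reduces the task to bounding
\begin{equation*}
\left|\iint \wt{f}(u)\wt{g}(v) h(u+v) |\eta(u,v,\omega)|^{-d-\ga}\,du\,dv\right|\lesssim \|\wt{f}\|_{L^2}\|\wt{g}\|_{W^{r_p,p}}\|h\|_{L^2}.
\end{equation*}
Cauchy-Schwarz in $u$ peels off $\|\wt{f}\|_{L^2}$, and the remaining $L^2_u$-estimate is closed by splitting $|\eta|^{-d-\ga}=K_{<}+K_{>}$ into near-zero and away-from-zero pieces (mirroring the splitting in Lemma \ref{lemma:Q-,bilinear estimate}), applying H\"older with exponents $p_1<d/(-\ga)<p_2$, and exploiting the translation invariance of the $L^2$-norm of $h(u+\cdot)$ and of the $L^{p_j}$-norms of $\wt{g}$. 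The hypothesis $r_p\ge d/p+\ga\ge 0$ then allows the Sobolev embeddings $W^{r_p,p}\hookrightarrow L^{p_1},L^{p_2}$ to close the estimate.

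The main obstacle is the H\"older step after the change of variables: unlike the loss term, where the kernel is a pure convolution in $\eta$ and admits a direct Young inequality, the weight $|\eta(u,v,\omega)|^{-d-\ga}$ here is a nonlinear function of $(u,v,\omega)$ jointly, so no off-the-shelf convolution bound applies. The workaround is to exploit the explicit structure $\eta=\tfrac{1}{2}(u-v-|u+v|\omega)$, which preserves enough translation invariance on each piece of the kernel decomposition to yield $\omega$-uniform mixed-norm bounds. A further subtlety is that the endpoint $d/p+\ga=0$ (i.e.\ $\ga=-d/p$) is reached here, in contrast to the strict inequalities required in Lemma \ref{lemma:Q-,bilinear estimate}; this improvement, which reflects the better behavior of the gain-term kernel after averaging in $\omega$, is precisely what permits the full range $\ga\in[-d/p_0,0]$ in Theorem \ref{thm:main theorem}.
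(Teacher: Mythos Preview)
The paper does not prove this lemma; it simply refers to \cite[Lemma 2.2]{CSZ24well}. So there is no in-paper argument to compare against, and the question is whether your sketch stands on its own. Your symmetry reduction of \eqref{equ:Q+,bilinear estimate,PM,g,f} to \eqref{equ:Q+,bilinear estimate,PM,f,g} via $(\omega,\eta)\mapsto(-\omega,-\eta)$ is correct, and the change of variables $(\xi,\eta)\to(u,v)=(\xi^{+}+\eta,\xi^{-}-\eta)$ indeed has unit Jacobian.

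The gap is the step after Cauchy--Schwarz in $u$. You claim the remaining factor
\[
\int_u\Bigl|\int_v \wt{g}(v)\,h(u+v)\,|\eta(u,v,\omega)|^{-d-\ga}\,dv\Bigr|^2 du
\]
can be closed by splitting the kernel and using ``translation invariance on each piece''. But $\eta(u,v,\omega)=\tfrac12\bigl((u-v)-|u+v|\omega\bigr)$ is \emph{not} a function of $u-v$ or of any fixed linear combination of $u,v$; the nonlinear factor $|u+v|$ destroys all convolution structure. For fixed $u$ the Jacobian of $v\mapsto\eta$ is $(\tfrac12)^{d}\,|1+\omega\cdot\widehat{u+v}|$, which degenerates, and a H\"older split of the type you describe produces a bound on the inner $v$-integral that is \emph{independent of~$u$}, whence the outer $L^2_u$-norm diverges. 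In the loss term one genuinely has a convolution $\wt{g}*|\cdot|^{-d-\ga}$; here, after your change of variables, one does not, and the ``workaround'' you describe is not a proof.

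There is also a structural inconsistency. You apply Minkowski in $\omega$ at the outset, discarding the angular averaging, and then invoke ``the better behavior of the gain-term kernel after averaging in $\omega$'' to justify the endpoint $d/p+\ga=0$. These two moves are incompatible: once $\omega$ is frozen, the operator $T_\omega$ has no better mapping properties than the loss term, and at best you could recover the strict inequalities $r_{p}>d/p+\ga>0$ of Lemma~\ref{lemma:Q-,bilinear estimate}. Reaching the non-strict range $r_{p}\ge d/p+\ga\ge 0$ stated here genuinely requires keeping the $\omega$-integration inside the estimate, which your scheme does not do.
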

For the proof of Lemma \ref{lemma:Q+,bilinear estimate}, see for example \cite[Lemma 2.2]{CSZ24well}. Because the collision operator is $x$-independent, the collision estimates
\eqref{equ:Q+,bilinear estimate,PM,f,g}--\eqref{equ:Q+,bilinear estimate,PM,g,f} apply equally to both the $\R^{d}$ and $\T^{d}$ spatial domains.

\begin{proof}[\textbf{Proof of Proposition $\ref{lemma:bilinear estimates,loss,gain}$ for the gain term}]
We only need to prove estimate \eqref{equ:Q+,bilinear estimate,Hs,1}, as the proof for estimate \eqref{equ:Q+,bilinear estimate,Hs,2} follows by a similar argument.

By duality,
\eqref{equ:Q+,bilinear estimate,Hs,1} is equivalent to
\begin{align}\label{equ:Q+,bilinear estimate,L2,equivalent}
\int \wt{Q}^{+}(U(t)\wt{f}_{0},U(t)\wt{g}_{0}) h dx d\xi dt\lesssim
T^{\frac{1}{2}}\n{\wt{f}_{0}}_{H_{x}^{s}H_{\xi}^{r}}\n{\wt{g}_{0}}_{H_{x}^{s_{1}}H_{\xi}^{r}} \n{h}_{L_{t}^{\infty}H_{x}^{-s}H_{\xi}^{-r}} .
\end{align}
We denote by $I$ the integral in \eqref{equ:Q+,bilinear estimate,L2,equivalent}.
Inserting a Littlewood-Paley decomposition gives that
\begin{align*}
I=\sum_{\substack{M_{1},M_{2},M_{3}\\N_{1},N_{2},N_{3}}} I_{M_{1},M_{2},M_{3},N_{1},N_{2},N_{3}}
\end{align*}
where
\begin{align*}
I_{M_{1},M_{2},M_{3},N_{1},N_{2},N_{3}}= \int \wt{Q}^{+}(P_{N_{1}}^{x}P_{M_{1}}^{\xi}\wt{f},P_{N_{2}}^{x}P_{M_{2}}^{\xi}\wt{g}) P_{N_{3}}^{x}P_{M}^{\xi}h dx d\xi dt.
\end{align*}
with $\wt{f}=U(t)\wt{f}_{0}$ and $\wt{g}=U(t)\wt{g}_{0}$.

Since $\wt{Q}^{+}$ also commutes with $P_{N_{3}}^{x}$, the top two frequencies
are comparable in size, which yields three cases
\begin{equation*}
\left\{
\begin{aligned}
N_{1}\sim N_{2}\geq N_{3},\\
N_{1}\sim N_{3}\geq N_{2},\\
N_{2}\sim N_{3}\geq N_{1}.
\end{aligned}
\right.
\end{equation*}
In addition, one key observation, as seen in \cite{chen2023sharp,CSZ24well}, is that a similar property is hinted in the $\xi$-variable, that is,
\begin{align}\label{equ:property,constraint,projector,v,variable}
P_{M}^{\xi}\wt{Q}^{+}(P_{M_{1}}^{\xi}\wt{f},P_{M_{2}}^{\xi}\wt{g})=0, \quad \text{if $M\geq 10\max\lrs{M_{1},M_{2}}$.}
\end{align}
Indeed, note that
\begin{align*}
\cf_{\xi}\lrs{P_{M}^{\xi}\wt{Q}^{+}(P_{M_{1}}^{\xi}\wt{f},P_{M_{2}}^{\xi}\wt{g})}
=
\vp_{M}(v)\int_{\mathbb{S}^{2}}\int_{\mathbb{R}^{3}} (\vp_{M_{1}}f)(v^{\ast })(\vp_{M_{2}}g)(u^{\ast}) B(u-v,\omega)dud\omega.
\end{align*}
By the energy conservation that $|v|^{2}+|u|^{2}=|v^{*}|^{2}+|u^{*}|^{2}$, we have $|v|\leq |v^{*}|+|u^{*}|$ for all $(u,\omega)\in \R^{d}\times \mathbb{S}^{d-1}$. Together with $M\geq 10\max\lrs{M_{1},M_{2}}$, this lower bound implies that
\begin{align*}
\vp_{M}(v)\vp_{M_{1}}(v^{*})\vp_{M_{2}}(u^{*})=0, \quad \text{for all $(u,\omega)\in \R^{d}\times \mathbb{S}^{d-1}$}.
\end{align*}
Thus, we arrive at \eqref{equ:property,constraint,projector,v,variable} and the constraint that $M\lesssim \max\lrs{M_{1},M_{2}}$.

 Now, we divide the sum into six cases as follows

Case $A_{1}$. $N_{1}\sim N_{2}\geq N_{3}$, $M_{1}\geq M_{2}$. $\quad$ Case $A_{2}$. $N_{1}\sim N_{2}\geq N_{3}$, $M_{1}\leq M_{2}$.

Case $B_{1}$. $N_{1}\sim N_{3}\geq N_{2}$, $M_{1}\geq M_{2}$. $\quad$ Case $B_{2}$. $N_{1}\sim N_{3}\geq N_{2}$, $M_{1}\leq M_{2}$.

Case $C_{1}$. $N_{2}\sim N_{3}\geq N_{1}$, $M_{1}\geq M_{2}$. $\quad$ Case $C_{2}$. $N_{2}\sim N_{3}\geq N_{1}$, $M_{1}\leq M_{2}$.

Given the  symmetric estimates in Lemma \ref{lemma:Q+,bilinear estimate}, it is sufficient to treat Cases $A_{1}$, $B_{1}$, $C_{1}$. The Cases $A_{2}$, $B_{2}$, $C_{2}$ will follow similarly.

\vspace{1em}
\textbf{Case $A_{1}$. $N_{1}\sim N_{2}\geq N_{3}$, $M_{1}\geq M_{2}$.}

Let $I_{A_{1}}$ denote the integral restricted to the Case $A_{1}$. By Cauchy-Schwarz,
\begin{align*}
I_{A_{1}}\lesssim
\sum_{\substack{ _{\substack{ N_{1}\sim N_{2}\geq N_{3}  \\ M_{1}\geq M_{2},M_{1}\geq M_{3}}}}}
\int \n{\wt{Q}^{+}(P_{N_{1}}^{x}P_{M_{1}}^{\xi}\wt{f},P_{N_{2}}^{x}P_{M_{2}}^{\xi}\wt{g})}_{L_{x,\xi}^{2}}
\n{P_{N_{3}}^{x}P_{M_3}^{\xi}h}_{L_{x,\xi}^{2}}dt.
\end{align*}
By using estimate \eqref{equ:Q+,bilinear estimate,PM,f,g} in Lemma \ref{lemma:Q+,bilinear estimate} and then H\"{o}lder inequality, we have
\begin{align*}
I_{A_{1}} \lesssim &\sum_{\substack{ _{\substack{ N_{1}\sim N_{2}\geq N_{3}  \\ M_{1}\geq M_{2},M_{1}\geq M_{3}}}}}
\int \bbn{\n{P_{N_{1}}^{x}P_{M_{1}}^{\xi}\widetilde{f}}_{L_{\xi}^{2}}
\n{P_{N_{2}}^{x}P_{M_{2}}^{\xi }\widetilde{g}}_{W_{\xi}^{r_{p},p}}}_{L_{x}^{2}}
\n{P_{N_{3}}^{x}P_{M_{3}}^{\xi}h}_{L_{x,\xi}^{2}} dt\\
\leq&\sum_{\substack{ _{\substack{ N_{1}\sim N_{2}\geq N_{3}  \\ M_{1}\geq M_{2},M_{1}\geq M_{3}}}}}
 \int \n{ P_{N_{1}}^{x}P_{M_{1}}^{\xi }\widetilde{f}}_{L_{x}^{2}L_{\xi }^{2}}
 \n{ P_{N_{2}}^{x}P_{M_{2}}^{\xi }\widetilde{g}}_{L_{x}^{\infty}W_{\xi}^{r_{p},p}}
 \n{P_{N_{3}}^{x}P_{M_{3}}^{\xi}h}_{L_{x,\xi}^{2}}dt \\
\leq&\sum_{\substack{ _{\substack{ N_{1}\sim N_{2}\geq N_{3}  \\ M_{1}\geq M_{2},M_{1}\geq M_{3}}}}}
 \n{ P_{N_{1}}^{x}P_{M_{1}}^{\xi }\widetilde{f}}_{L_{t}^{\infty}L_{x}^{2}L_{\xi }^{2}}
 \n{ P_{N_{2}}^{x}P_{M_{2}}^{\xi }\widetilde{g}}_{L_{t}^{p}L_{x}^{\infty}W_{\xi}^{r_{p},p}}
 \n{P_{N_{3}}^{x}P_{M_{3}}^{\xi}h}_{L_{t}^{2}L_{x,\xi}^{2}} .
\end{align*}
where in the last inequality we have discarded the unimportant factor $T^{\frac{1}{2}-\frac{1}{p}}$.
By Bernstein inequality that
\begin{align*}
\n{P_{N_{2}}^{x}\lra{\nabla_{\xi}}^{r_{p}}\wt{g}}_{L_{x}^{\infty}}\lesssim N_{2}^{\frac{d}{p}} \n{P_{N_{2}}^{x}\lra{\nabla_{\xi}}^{r_{p}}\wt{g}}_{L_{x}^{p}},
\end{align*}
we obtain
\begin{align*}
I_{A_{1}} \lesssim &\sum_{\substack{ _{\substack{ N_{1}\sim N_{2}\geq N_{3}  \\ M_{1}\geq M_{2},M_{1}\geq M_{3}}}}}
N_{2}^{\frac{d}{p}}
\n{P_{N_{1}}^{x}P_{M_{1}}^{\xi}\widetilde{f}}_{L_{t}^{\infty}L_{x,\xi}^{2}}
 \n{P_{N_{2}}^{x}P_{M_{2}}^{\xi}\lra{\nabla_{\xi}}^{r_{p}} \widetilde{g}}_{L_{t}^{p}L_{x}^{p}L_{\xi}^{p}}
\n{P_{N_{3}}^{x}P_{M_{3}}^{\xi}h}_{L_{t}^{2}L_{x,\xi}^{2}}.
\end{align*}
By Strichartz estimate \eqref{equ:strichartz estimate,regularity} in Corollary \ref{lemma:regularity strichartz}, for $p\geq \frac{2(d+2)}{d}$ we have
\begin{align*}
I_{A_{1}}\lesssim&\sum_{\substack{ _{\substack{ N_{1}\sim N_{2}\geq N_{3}  \\ M_{1}\geq M_{2},M_{1}\geq M_{3}}}}}
N_{2}^{\frac{d}{p}+s_{p}}
\n{P_{N_{1}}^{x}P_{M_{1}}^{\xi}\widetilde{f}_{0}}_{L_{x}^{2}L_{\xi}^{2}}
 \n{P_{N_{2}}^{x}P_{M_{2}}^{\xi}\widetilde{g}_{0}}_{L_{x}^{2}H_{\xi}^{r_{p}+s_{p}+\frac{1}{p}}}
\n{P_{N_{3}}^{x}P_{M_{3}}^{\xi}h}_{L_{t}^{2}L_{x}^{2}L_{\xi}^{2}}.
\end{align*}
By choosing appropriate parameters $(p,s_{p},r_{p})$ such that
$$s>s_{p}+\frac{d}{p}, \quad r>r_{p}+s_{p}+\frac{1}{p},$$
we carry out the sum in $M_{2}$ to obtain
\begin{align*}
I_{A_{1}}\lesssim&\sum_{\substack{ _{\substack{ N_{1}\sim N_{2}\geq N_{3}  \\ M_{1}\geq M_{3}}}}}
N_{2}^{\frac{d}{p}+s_{p}}
\n{P_{N_{1}}^{x}P_{M_{1}}^{\xi}\widetilde{f}_{0}}_{L_{x}^{2}L_{\xi}^{2}}
 \n{P_{N_{2}}^{x}\widetilde{g}_{0}}_{L_{x}^{2}H_{\xi}^{r}}
\n{P_{N_{3}}^{x}P_{M_{3}}^{\xi}h}_{L_{t}^{2}L_{x}^{2}L_{\xi}^{2}}\\
\lesssim &\sum_{\substack{ _{\substack{ N_{1}\sim N_{2}\geq N_{3}  \\ M_{1}\geq M_{3}}}}}N_{2}^{s}
\n{P_{N_{1}}^{x}P_{M_{1}}^{\xi}\widetilde{f}_{0}}_{L_{x}^{2}L_{\xi}^{2}}
 \n{P_{N_{2}}^{x}\widetilde{g}_{0}}_{L_{x}^{2}H_{\xi}^{r}}
\n{P_{N_{3}}^{x}P_{M_{3}}^{\xi}h}_{L_{t}^{2}L_{x}^{2}L_{\xi}^{2}}.
   \end{align*}
Using Bernstein inequality again, we obtain
\begin{align}\label{equ:Q+,A1,final estimate}
I_{A_{1}}\lesssim \sum_{\substack{ _{\substack{ N_{1}\sim N_{2}\geq N_{3}  \\ M_{1}\geq M_{3}}}}}
\frac{N_{2}^{s-s_{1}}N_{3}^{s_{1}}M_{3}^{r}}{N_{1}^{s}M_{1}^{r}}
\n{P_{N_{1}}^{x}P_{M_{1}}^{\xi}\widetilde{f}_{0}}_{H_{x}^{s}H_{\xi}^{r}}
 \n{P_{N_{2}}^{x}\widetilde{g}_{0}}_{H_{x}^{s_{1}}H_{\xi}^{r}}
\n{P_{N_{3}}^{x}P_{M_{3}}^{\xi}h}_{L_{t}^{2}H_{x}^{-s_{1}}H_{\xi}^{-r}}.
\end{align}
Carrying out the sum in $N_{3}$ and using Cauchy-Schwarz in $(N_{1},N_{2})$ and $(M_{1},M_{3})$, we have
\begin{align*}
I_{A_{1}}\lesssim& \sum_{\substack{ _{\substack{ N_{1}\sim N_{2} \\ M_{1}\geq M_{3}}}}}
\frac{N_{2}^{s}M_{3}^{r}}{N_{1}^{s}M_{1}^{r}}
\n{P_{N_{1}}^{x}P_{M_{1}}^{\xi}\widetilde{f}_{0}}_{H_{x}^{s}H_{\xi}^{r}}
 \n{P_{N_{2}}^{x}\widetilde{g}_{0}}_{H_{x}^{s_{1}}H_{\xi}^{r}}
\n{P_{M_{3}}^{\xi}h}_{L_{t}^{2}H_{x}^{-s_{1}}H_{\xi}^{-r}}\\
\lesssim& \n{\wt{f}_{0}}_{H_{x}^{s}H_{\xi}^{r}}\n{\wt{g}_{0}}_{H_{x}^{s_{1}}H_{\xi}^{r}}
\n{h}_{L_{t}^{2}H_{x}^{-s_{1}}H_{\xi}^{-r}}\\
\lesssim& T^{\frac{1}{2}}\n{\wt{f}_{0}}_{H_{x}^{s}H_{\xi}^{r}}\n{\wt{g}_{0}}_{H_{x}^{s_{1}}H_{\xi}^{r}}
\n{h}_{L_{t}^{\infty}H_{x}^{-s_{1}}H_{\xi}^{-r}}.
\end{align*}
which completes the proof of \eqref{equ:Q+,bilinear estimate,L2,equivalent} for Case $A_{1}$.

\vspace{1em}
\textbf{Case $B_{1}$. $N_{1}\sim N_{3}\geq N_{2}$, $M_{1}\geq M_{2}$.}

Following the same process as for estimate \eqref{equ:Q+,A1,final estimate} in Case $A_{1}$, we arrive at
\begin{align*}
I_{B_{1}}\lesssim \sum_{\substack{ _{\substack{ N_{1}\sim N_{3}\geq N_{2}  \\ M_{1}\geq M_{3}}}}}
\frac{N_{2}^{s-s_{1}}N_{3}^{s_{1}}M_{3}^{r}}{N_{1}^{s}M_{1}^{r}}
\n{P_{N_{1}}^{x}P_{M_{1}}^{\xi}\widetilde{f}_{0}}_{H_{x}^{s}H_{\xi}^{r}}
 \n{P_{N_{2}}^{x}\widetilde{g}_{0}}_{H_{x}^{s_{1}}H_{\xi}^{r}}
\n{P_{N_{3}}^{x}P_{M_{3}}^{\xi}h}_{L_{t}^{2}H_{x}^{-s_{1}}H_{\xi}^{-r}}.
\end{align*}
Due to that $s_{1}\leq s$, we first carry out the sum in $N_{2}$ and then use Cauchy-Schwarz inequality to obtain
\begin{align*}
I_{B_{1}}\lesssim& \sum_{\substack{ _{\substack{ N_{1}\sim N_{3} \\ M_{1}\geq M_{3}}}}}
\frac{N_{3}^{s}M_{3}^{r}}{N_{1}^{s}M_{1}^{r}}
\n{P_{N_{1}}^{x}P_{M_{1}}^{\xi}\widetilde{f}_{0}}_{H_{x}^{s}H_{\xi}^{r}}
 \n{\widetilde{g}_{0}}_{H_{x}^{s_{1}}H_{\xi}^{r}}
\n{P_{N_{3}}^{x}P_{M_{3}}^{\xi}h}_{L_{t}^{2}H_{x}^{-s_{1}}H_{\xi}^{-r}}\\
\lesssim& \n{\wt{f}_{0}}_{H_{x}^{s}H_{\xi}^{r}}\n{\wt{g}_{0}}_{H_{x}^{s_{1}}H_{\xi}^{r}}
\n{h}_{L_{t}^{2}H_{x}^{-s_{1}}H_{\xi}^{-r}}\\
\lesssim& T^{\frac{1}{2}}\n{\wt{f}_{0}}_{H_{x}^{s}H_{\xi}^{r}}\n{\wt{g}_{0}}_{H_{x}^{s_{1}}H_{\xi}^{r}}
\n{h}_{L_{t}^{\infty}H_{x}^{-s_{1}}H_{\xi}^{-r}},
\end{align*}
which completes the proof of \eqref{equ:Q+,bilinear estimate,L2,equivalent} for Case $B_{1}$.

\bigskip
\textbf{Case $C_{1}$. $N_{2}\sim N_{3}\geq N_{1}$, $M_{1}\geq M_{2}$.}

Let $I_{C_{1}}$ denote the integral restricted to the Case $C_{1}$.
 Decompose the $N_{2}$ and $N_{3}$ dyadic spaces
into $N_{1}$ size cubes. In a manner analogous to the analysis for the loss term in Case $C$, for each
choice $D$ of an $N_{1}$-cube within the $k_{2}$ space, the variable $k_{3}$ is
constrained to at most $10^{d}$ of $N_{1}$-cubes dividing the $N_{3}$ dyadic space.
For convenience, we also refer to these $10^{d}$ cubes as a single cube $D_{c}$ that
corresponds to $D$. Then we have
\begin{align*}
I_{C_{1}}=\sum_{\substack{ _{\substack{ N_{2}\sim N_{3}\geq N_{1}  \\ M_{1}\geq M_{3}}}}}\sum_{D}
\int \wt{Q}^{+}(P_{N_{1}}^{x}P_{M_{1}}^{\xi}\wt{f},P_{N_{2}}^{x}P_{M_{2}}^{\xi}P_{D}^{x}\wt{g})
 P_{N_{3}}^{x}P_{M_{3}}^{\xi}P_{D^{c}}^{x}h dx d\xi dt.
\end{align*}
By Cauchy-Schwarz, we get
\begin{align*}
I_{C_{1}}\lesssim
\int \sum_{\substack{ _{\substack{ N_{2}\sim N_{3}\geq N_{1}  \\ M_{1}\geq M_{3}}}}}
\n{\wt{Q}^{+}(P_{N_{1}}^{x}P_{M_{1}}^{\xi}\wt{f},P_{N_{2}}^{x}P_{M_{2}}^{\xi}P_{D}^{x}\wt{g})}_{l_{D}^{2}L_{x,\xi}^{2}}
\n{P_{N_{3}}^{x}P_{M_{3}}^{\xi}P_{D^{c}}^{x}h}_{l_{D}^{2}L_{x,\xi}^{2}}dt.
\end{align*}
By using estimate \eqref{equ:Q+,bilinear estimate,PM,f,g} in Lemma \ref{lemma:Q+,bilinear estimate} and then H\"{o}lder inequality, we have
\begin{align*}
I_{C_{1}} \lesssim &\int \sum_{\substack{ _{\substack{ N_{2}\sim N_{3}\geq N_{1}  \\ M_{1}\geq M_{3}}}}}
 \bbn{\n{P_{N_{1}}^{x}P_{M_{1}}^{\xi}\wt{f}}_{L_{\xi}^{2}}
\n{P_{N_{2}}^{x}P_{M_{2}}^{\xi}P_{D}^{x}\wt{g}}_{W_{\xi}^{r_{p},p}}}_{l_{D}^{2}L_{x}^{2}}
\n{P_{N_{3}}^{x}P_{M_{3}}^{\xi}P_{D^{c}}^{x}h}_{l_{D}^{2}L_{x,\xi}^{2}}dt \\
\leq& \sum_{\substack{ _{\substack{ N_{2}\sim N_{3}\geq N_{1}  \\ M_{1}\geq M_{3}}}}}
 \n{P_{N_{1}}^{x}P_{M_{1}}^{\xi}\wt{f}}_{L_{t}^{\infty}L_{x}^{2}L_{\xi}^{2}}
 \n{P_{N_{2}}^{x}P_{M_{2}}^{\xi}P_{D}^{x}\wt{g}}_{L_{t}^{p}l_{D}^{2}L_{x}^{\infty}W_{\xi}^{r_{p},p}}
 \n{P_{N_{3}}^{x}P_{M_{3}}^{\xi}P_{D^{c}}^{x}h}_{L_{t}^{2}l_{D}^{2}L_{x,\xi}^{2}} .
\end{align*}
By Minkowski inequality, Bernstein inequality, and Strichartz estimate \eqref{equ:strichartz estimate,regularity} with noncentered frequency localization,
we obtain
\begin{align*}
I_{C_{1}}\lesssim& \sum_{\substack{ _{\substack{ N_{2}\sim N_{3}\geq N_{1}  \\ M_{1}\geq M_{3}}}}}N_{1}^{\frac{d}{p}}
 \n{P_{N_{1}}^{x}P_{M_{1}}^{\xi}\wt{f}}_{L_{t}^{\infty}L_{x}^{2}L_{\xi}^{2}}
\n{P_{N_{2}}^{x}P_{M_{2}}^{\xi}P_{D}^{x}\lra{\nabla_{\xi}}^{r_{p}}\wt{g}}_{l_{D}^{2}L_{t,x,\xi}^{p}}
 \n{P_{N_{3}}^{x}P_{M_{3}}^{\xi}P_{D^{c}}^{x}h}_{l_{D}^{2}L_{x,\xi}^{2}}\\
 \lesssim& \sum_{\substack{ _{\substack{ N_{2}\sim N_{3}\geq N_{1}  \\ M_{1}\geq M_{3}}}}}
 N_{1}^{s_{p}+\frac{d}{p}}
 \n{P_{N_{1}}^{x}P_{M_{1}}^{\xi}\wt{f}_{0}}_{L_{x}^{2}L_{\xi}^{2}}
\n{P_{N_{2}}^{x}P_{M_{2}}^{\xi}\wt{g}_{0}}_{L_{x}^{2}H_{\xi}^{r_{p}+s_{p}+\frac{1}{p}}}
 \n{P_{N_{3}}^{x}P_{M_{3}}^{\xi}h}_{L_{t}^{2}L_{x,\xi}^{2}}\\
 \lesssim& \sum_{\substack{ _{\substack{ N_{2}\sim N_{3}\geq N_{1}  \\ M_{1}\geq M_{3}}}}}
 N_{1}^{s_{p}+\frac{d}{p}}\frac{M_{3}^{r}}{M_{1}^{r}}
 \n{P_{N_{1}}^{x}P_{M_{1}}^{\xi}\wt{f}_{0}}_{L_{x}^{2}H_{\xi}^{r}}
\n{P_{N_{2}}^{x}P_{M_{2}}^{\xi}\wt{g}_{0}}_{L_{x}^{2}H_{\xi}^{r_{p}+s_{p}+\frac{1}{p}}}
 \n{P_{N_{3}}^{x}P_{M_{3}}^{\xi}h}_{L_{t}^{2}L_{x}^{2}H_{\xi}^{-r}}\\
 \lesssim &\sum_{\substack{ _{\substack{ N_{2}\sim N_{3} \\ M_{1}\geq M_{3}}}}}
 \frac{M_{3}^{r}}{M_{1}^{r}}
 \n{P_{M_{1}}^{\xi}\wt{f}_{0}}_{H_{x}^{s}L_{\xi}^{2}}
 \n{P_{N_{2}}^{x}P_{M_{2}}^{\xi}\wt{g}_{0}}_{L_{x}^{2}H_{\xi}^{r}}
 \n{P_{N_{3}}^{x}P_{M_{3}}^{\xi}h}_{L_{t}^{2}L_{x}^{2}H_{\xi}^{-r}}.
 \end{align*}
 where in the last inequality we have chosen appropriate parameters $(p,s_{p},r_{p})$ such that
$$s>s_{p}+\frac{d}{p}, \quad r>r_{p}+s_{p}+\frac{1}{p}.$$
By Cauchy-Schwarz, we arrive at
\begin{align*}
 I_{C_{1}}
 \lesssim& \n{\wt{f}_{0}}_{H_{x}^{s}H_{\xi}^{r}}\n{\wt{g}_{0}}_{H_{x}^{s_{1}}H_{\xi}^{r}}
\n{h}_{L_{t}^{2}H_{x}^{-s_{1}}H_{\xi}^{-r}}\\
\lesssim& T^{\frac{1}{2}}\n{\wt{f}_{0}}_{H_{x}^{s}H_{\xi}^{r}}\n{\wt{g}_{0}}_{H_{x}^{s_{1}}H_{\xi}^{r}}
\n{h}_{L_{t}^{\infty}H_{x}^{-s_{1}}H_{\xi}^{-r}},
\end{align*}
which completes the proof of \eqref{equ:Q+,bilinear estimate,L2,equivalent} for Case $C_{1}$.
\end{proof}

\section{Uniqueness of the Infinite Boltzmann Hierarchy}\label{sec:Uniqueness of the Infinite Boltzmann Hierarchy}

The Boltzmann hierarchy for a sequence $\lr{f^{(k)}}_{k=1}^{\infty}$ is an infinite linear system given by
\begin{equation}\label{equ:boltzmann hierarchy}
\left\{
\begin{aligned}
\pa_{t}f^{(k)}+V_{k}\cdot\nabla_{X_{k}}f^{(k)}=&Q^{(k+1)}f^{(k+1)},\\
f^{(k)}(0)=&f_{0}^{(k)},
\end{aligned}
\right.
\end{equation}
where $X_{k}=(x_{1},..,x_{k})$ and $V_{k}=(v_{1},..,v_{k})$.
Here, the collision operator can be decomposed into
\begin{align*}
Q^{(k+1)}f^{(k+1)}=\sum_{j=1}^{k}Q_{j,k+1}f^{(k+1)}
=&\sum_{j=1}^{k}\lrs{Q_{j,k+1}^{+}f^{(k+1)}-Q_{j,k+1}^{-}f^{(k+1)}},
\end{align*}
where
\begin{align*}
Q_{j,k+1}^{+}f^{(k+1)}(X_{k},V_{k})=&\int_{\mathbb{R}^{d}}\int_{\mathbb{S}^{d-1}} f^{(k+1)}(t,X_{k},x_{j},V_{k,j}^{*},v_{k+1}^{*}) B(v_{k+1}-v_{j},\omega)dv_{k+1}d\omega,\\
Q_{j,k+1}^{-}f^{(k+1)}(X_{k},V_{k})=&\int_{\mathbb{R}^{d}}\int_{\mathbb{S}^{d-1}} f^{(k+1)}(t,X_{k},x_{j},V_{k},v_{k+1})  B(v_{k+1}-v_{j},\omega) dv_{k+1} d\omega,
\end{align*}
with the notation $V_{k,j}^{*}=(v_{1},..,v_{j-1},v_{j}^{*},v_{j+1},..,v_{k})$.

Given a solution $f(t)$
 to the Boltzmann equation \eqref{equ:Boltzmann}, it generates a factorized solution to the Boltzmann hierarchy \eqref{equ:boltzmann hierarchy} by letting
\begin{equation}\label{equ:factorized solution}
f^{(k)}(t)=f^{\otimes k}(t).
\end{equation}
Due to the relation \eqref{equ:factorized solution}, the unconditional uniqueness of the Boltzmann hierarchy implies the unconditional uniqueness of the Boltzmann equation. Therefore, we deal with the uniqueness problem by the hierarchy method and prove the following Theorem for the Boltzmann hierarchy.
\begin{theorem}[Uncondition uniqueness for the Boltzmann hierarchy]\label{thm:uniqueness hierarchy}
There is at most one solution $\Ga=\lr{f^{(k)}}_{k\geq 1}$ to the Boltzmann hierarchy \eqref{equ:boltzmann hierarchy} in $[0,T_{0}]$ subject to the conditions that
\begin{enumerate}[$(1)$]
\item $\Ga$ is admissible in the sense that
\begin{align}\label{equ:representation}
f^{(k)}(t)=\int f^{\otimes k}d\nu_{t}(f),
\end{align}
where $d\nu_{t}(f)$ is a probability measure on the space of probability density functions.
\item $\Ga$ satisfies the uniform regularity bounds, that is, there exists a constant $C$ such that
\begin{align}\label{equ:uniform bounds,hierarchy}
\sup_{t\in [0,T_{0}]}\int \n{\lra{\nabla_{x}}^{s}
\lra{v}^{r}f}_{L_{x}^{2}L_{v}^{2}}^{k}d\nu_{t}(f)\leq C^{k}, \quad \f k\geq 1.
\end{align}
\end{enumerate}
where the index $(s,r)$ is the same as in Theorem $\ref{thm:main theorem}$.
\end{theorem}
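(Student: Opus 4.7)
\textbf{Proof proposal for Theorem \ref{thm:uniqueness hierarchy}.}
The plan is to adapt the quantum hierarchy uniqueness scheme (KM board game + de Finetti symmetrization + multilinear estimates) to this classical kinetic setting, substituting the bilinear estimates of Proposition \ref{lemma:bilinear estimates,loss,gain} for the Sobolev multilinear estimates used in the NLS case. Let $\Gamma_1=\{f_1^{(k)}\}$ and $\Gamma_2=\{f_2^{(k)}\}$ be two solutions meeting the hypotheses, and set $f^{(k)}=f_1^{(k)}-f_2^{(k)}$. Passing to the Fourier side in velocity, $\wt{f}^{(k)}$ satisfies a coupled hierarchy of symmetric hyperbolic Schr\"odinger equations with zero data and right-hand side $\sum_{j=1}^{k}\wt{Q}_{j,k+1}\wt{f}^{(k+1)}$, and by admissibility both $\wt{f}_1^{(k)}$ and $\wt{f}_2^{(k)}$ are convex integrals of tensor products $\wt{g}^{\otimes k}$ with $\wt{g}$ satisfying the uniform bound coming from \eqref{equ:uniform bounds,hierarchy}.

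I would then iterate the Duhamel formula $n$ times to obtain, for each fixed $k$ and $t\in[0,T_0]$,
\begin{equation*}
\wt{f}^{(k)}(t)=\sum_{\mu}\int_{0\leq t_{n}\leq\cdots\leq t_{1}\leq t}U^{(k)}(t-t_{1})\wt{Q}_{j_{1},k+1}U^{(k+1)}(t_{1}-t_{2})\cdots\wt{Q}_{j_{n},k+n}\wt{f}^{(k+n)}(t_{n})\,\rd\mathbf t,
\end{equation*}
where $U^{(k)}(s)=\exp(is\sum_{\ell=1}^{k}\nabla_{\xi_\ell}\cdot\nabla_{x_\ell})$ and $\mu=(j_1,\dots,j_n)$ with $1\leq j_{i}\leq k+i-1$. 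The naive count of terms is $k(k+1)\cdots(k+n-1)$, which is too large to sum. The first key step is to apply the Klainerman--Machedon board game in the form pioneered by Chen--Denlinger--Pavlovi\'c \cite{chen2019local} and refined in \cite{chen2019derivation,chen2022unconditional,chen2023derivationboltzmann}: group the $\mu$-terms into equivalence classes indexed by upper-echelon ``Duhamel tree'' matrices, using the symmetry of the tensorized solutions guaranteed by admissibility. This reduces the effective count to at most $C^{n}$ while rewriting each reduced term as an ordered iterated integral in which the order of time integrations is compatible with the echelon structure.

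The second key step is the iterative energy bound on each reduced term. Fix a single echelon representative. Using admissibility \eqref{equ:representation}, at the top level I would replace $\wt{f}^{(k+n)}(t_{n})$ by an integral of tensor products, reducing the estimate to one on factorized data. Then I would apply Proposition \ref{lemma:bilinear estimates,loss,gain} inductively, peeling off one $\wt{Q}_{j_{i},k+i}$ at each step: at each peel, the symmetric form \eqref{equ:Q,bilinear estimate,Hs,1}--\eqref{equ:Q,bilinear estimate,Hs,2} with $s_{1}=s$ controls the emerging $H^{s}_{x}L^{2,r}_{v}$ norm of the lower-level object in terms of the next-level one, picking up a factor $T^{1/2}\prod\|\cdot\|_{H^{s}_{x}H^{r}_{\xi}}$ from each step. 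Because the estimate is symmetric in $s_{1}\in[0,s]$, it can be applied to the ``interior'' factor at each iteration without losing regularity on the other factor, which is crucial for chaining $n$ bilinear estimates together. Combining the $C^{n}$ from the board game, $T^{n/2}$ from the time factors, and the uniform bound $C^{k+n}$ from \eqref{equ:uniform bounds,hierarchy} yields, for some $C_*$,
\begin{equation*}
\n{\wt{f}^{(k)}(t)}_{H^{s}_{x_{1},\dots,x_{k}}H^{r}_{\xi_{1},\dots,\xi_{k}}}\leq (C_{*}T^{1/2})^{n}C_{*}^{k},
\end{equation*}
so for $T\leq T_{*}$ with $C_{*}T_{*}^{1/2}<1$ the right-hand side vanishes as $n\to\infty$, giving $f^{(k)}\equiv 0$ on $[0,T_{*}]$ for every $k$. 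A standard continuity bootstrap then extends uniqueness to all of $[0,T_{0}]$ since the bound $T_{*}$ depends only on the uniform constant $C$.

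The main obstacle, and the reason this argument is nontrivial, is the interplay between the nonsymmetric, nonlocal collision kernel $\wt{Q}_{j,k+1}^{\pm}$ and the KM board game. Unlike the cubic NLS case where the interaction term is manifestly symmetric in its two slots, here the gain and loss pieces act asymmetrically and involve two-fold integrals with the angular kernel $\mathbf b$. Making the board game symmetrization valid therefore requires that the bilinear estimate be truly symmetric between the two factors, which is exactly the content of \eqref{equ:Q,bilinear estimate,Hs,1}--\eqref{equ:Q,bilinear estimate,Hs,2} and was engineered for this purpose. The remaining delicate point is ensuring that the echelon reduction is compatible with the constraint $\ga\in[-d/p_0,0]$ on the torus, so that at each iterative step one stays within the range where the bilinear estimate holds; this is guaranteed because the bilinear estimate does not degrade with $k$, the number of spectator variables simply being carried along.
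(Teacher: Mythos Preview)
Your overall strategy---iterate Duhamel, apply the Klainerman--Machedon board game to cut the factorial count to $C^n$ echelon classes, insert the de~Finetti representation, then peel off one collision operator at a time using Proposition~\ref{lemma:bilinear estimates,loss,gain}---is the same as the paper's. The gap is in how you close the iteration.

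You propose to apply \eqref{equ:Q,bilinear estimate,Hs,1}--\eqref{equ:Q,bilinear estimate,Hs,2} with $s_1=s$ at every one of the $n$ peels and output a bound on $\n{\wt f^{(k)}}_{H^s_x H^r_\xi}$ carrying a factor $(C_*T^{1/2})^n$. This fails at the innermost step. Proposition~\ref{lemma:bilinear estimates,loss,gain} requires the inputs to $\wt Q$ to be \emph{free evolutions} $U(t)\wt f_0$, $U(t)\wt g_0$ with $\wt f_0,\wt g_0$ independent of the integration variable. After invoking admissibility the deepest integrand is $\int \wt Q(g,g)\,d\nu_{t_n}(g)$, and the measure $\nu_{t_n}$ depends on $t_n$; the $g$'s are not free flows in $t_n$, so the space-time estimate cannot absorb the $dt_n$ integral. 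You are forced to use a \emph{fixed-time} bound on $\wt Q(g,g)$ at the last node. But in the $x$-variable $\wt Q$ is a pointwise product, so a fixed-time bound with $H^s_x$ output from $H^s_x$ inputs would require the algebra property $s>d/2$---precisely the regime you are trying to go below. The only fixed-time bound available (Lemma~\ref{lemma:rough term}) outputs $L^2_x H^r_\xi$, not $H^s_x H^r_\xi$, and hence the iteration cannot remain in $H^s_x$ throughout; your displayed inequality for $\n{\wt f^{(k)}}_{H^s_x H^r_\xi}$ is not obtainable as stated.

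The paper's remedy is to build a Duhamel tree for each echelon class (Algorithm~\ref{algorithm:duhamel tree}, Proposition~\ref{prop:from d-tree to duhamel expansion}) and track the unique path from the root to the deepest node $D^{(k+1)}$. Along that path one applies the space-time estimate with $s_1=0$; off that path with $s_1=s$; and $D^{(k+1)}$ itself is handled by Lemma~\ref{lemma:rough term}. This is where the full range $s_1\in[0,s]$ is actually used---not, as you write, ``without losing regularity on the other factor,'' but deliberately dropping to $L^2_x$ along the spine so that the fixed-time endpoint can be accommodated. The output is a bound on $\n{\wt f^{(1)}}_{L^2_{x,\xi}}$ (not on an $H^s$ norm), with $(CT^{1/2})^{k-1}$ from the $k-1$ space-time estimates and the remaining $dt_{k+1}$ estimated trivially. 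Your proposal should be modified accordingly: estimate in $L^2_{x,\xi}$, use the tree to decide which branch carries $s_1=0$ at each peel, and invoke a fixed-time estimate at the deepest node.
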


The admissible condition in Theorem \ref{thm:uniqueness hierarchy} can, in fact, be relaxed by invoking the Hewitt-Savage theorem. Regarding the uniform regularity bounds condition, like in \cite{chen2015unconditional},
under representation \eqref{equ:representation}, we can use Chebyshev's inequality to reformulate \eqref{equ:uniform bounds,hierarchy} into a support condition that $\operatorname{supp}\nu_{t}\subset \mathbb{B}$ where
\begin{align*}
\mathbb{B}=:\lr{f:\n{f}_{H_{x}^{s}L_{v}^{2,r}}\leq C}.
\end{align*}

To match the notations of dispersive and bilinear estimates established in
Sections \ref{sec:Strichartz Estimates}--\ref{sec:Bilinear Estimates},
we might as well work in the Fourier side and set
\begin{align*}
\wt{f}^{(k)}(t)=\mathcal{F}_{(v_{1},..,v_{k})\mapsto (\xi_{1},..,\xi_{k})}^{-1}(f^{(k)}(t)).
\end{align*}
We rewrite $\wt{f}^{(k)}$ in Duhamel form
\begin{align}\label{equ:duhamel,boltzmann hierarchy}
\wt{f}^{(k)}(t)=U^{(k)}\wt{f}^{(k)}(0)+\int_{0}^{t}U^{{(k)}}(t-s)\wt{Q}^{(k+1)}\wt{f}^{(k+1)}(s)ds,
\end{align}
with $U^{(k)}(t)=\prod_{j=1}^{k}e^{it\nabla_{\xi_{j}}\cdot\nabla_{x_{j}}}$.

Let $f_{1}$ and $f_{2}$ be two $C([0,T];H_{x}^{s}L_{v}^{2,r})$ solutions to the Boltzmann equation \eqref{equ:Boltzmann} with the same initial
 datum.
Set
\begin{align*}
 \Gamma_{1}=&\lr{\wt{f}_{1}^{\otimes k}}_{k=1}^{\infty}, \quad
 \Gamma_{2}=\lr{\wt{f}_{2}^{\otimes k}}_{k=1}^{\infty}, \\
\Gamma=&\lr{\wt{f}_{1}^{\otimes k}-\wt{f}_{2}^{\otimes k}}_{k=1}^{\infty}=\lr{\wt{f}^{(k)}(t)=\int \wt{f}^{\otimes k}d\nu_{t}(\wt{f})},
\end{align*}
where $\nu_{t}(\wt{f})=\delta_{\wt{f}_{1}(t)}(\wt{f})-\delta_{\wt{f}_{2}(t)}(\wt{f})$.

To conclude the uniqueness of the Boltzmann equation, it suffices to prove $\Ga=0$.
Since the Boltzmann hierarchy \eqref{equ:duhamel,boltzmann hierarchy} is linear, $\Gamma$ is hence a solution to the Boltzmann
 hierarchy with zero initial datum. Thus after iterating $(\ref{equ:duhamel,boltzmann hierarchy})$ $k$ times, we can write
\begin{align}
\wt{f}^{(1)}(t_{1},x_{1},\xi_{1})=\int_{0}^{t_{1}}\int_{0}^{t_{2}}\ccc \int_{0}^{t_{k}}
J^{(k+1)}(\wt{f}^{(k+1)}(t_{k+1}))d\underline{t}_{k+1}
\end{align}
where $\underline{t}_{k+1}=(t_{2},t_{3},...,t_{k+1})$ and
\begin{align}
J^{(k+1)}(\wt{f}^{(k+1)})(t_{1},\underline{t}_{k+1})=U^{(1)}_{1,2}\wt{Q}^{(2)}U_{2,3}^{(2)}\wt{Q}^{(3)}\ccc
U_{k,k+1}^{(k)}\wt{Q}^{(k+1)}\wt{f}^{(k+1)},
\end{align}
with the notation that $U_{j,j+1}^{(j)}:=U^{(j)}(t_{j}-t_{j+1})$.

As $\wt{Q}^{(k+1)}$ has $k$ terms, there are $k!$ summands inside $\wt{f}(t_{1})$. More precisely,
\begin{align}
\wt{f}(t_{1})=\sum_{\mu}\int_{t_{1}\geq t_{2}\geq \ccc\geq t_{k+1}}J_{\mu}^{(k+1)}(\wt{f}^{(k+1)})(t_{1},\underline{t}_{k+1})
d\underline{t}_{k+1},
\end{align}
where
\begin{align}\label{equ:J,k,formula}
J_{\mu}^{(k+1)}(\wt{f}^{(k+1)})(t_{1},\underline{t}_{k+1})=U_{1,2}^{(1)}\wt{Q}_{\mu(2),2}^{(2)}U_{2,3}^{(2)}\wt{Q}_{\mu(3),3}^{(3)}\ccc
U_{k,k+1}^{(k)}\wt{Q}^{(k+1)}_{\mu(k+1),k+1}\wt{f}^{(k+1)}.
\end{align}
Here, $\lr{\mu}$ is a set of maps from $\lr{2,3,..,k+1}$ to $\lr{1,2,...,k}$ satisfying
that $\mu(2)=1$ and $\mu(j)<j$ for all $j$.

In the quantum setting, there are some known combinatorics
based on Feymann diagrams. Here, we rely on a
combinatorics argument, a Klainerman-Machedon board game, to reduce the number of terms by combining them.

\begin{lemma}[Klainerman-Machedon board game, \cite{KM08}]\label{lemma:KM board game}
One can rewrite
\begin{align}\label{equ:km,f,I}
\wt{f}^{(1)}(t_{1})=\sum_{\mu\in D_{k}}I_{\mu}^{(k+1)}(\wt{f}^{(k+1)})(t_{1}),
\end{align}
where there are at most $4^{k}$ terms inside the set $D_{k}$. Here,
\begin{align}
I_{\mu}^{(k+1)}(\wt{f}^{(k+1)})(t_{1})=\int_{T(\mu)}J_{\mu}^{(k+1)}(\wt{f}^{(k+1)})(t_{1},\underline{t}_{k+1})
d\underline{t}_{k+1},
\end{align}
where the time integration domain $T(\mu)$ is a subset of $[0,t_{1}]^{k}$, depending on $\mu$.
\end{lemma}
For the proof of Lemma \ref{lemma:KM board game}, see \cite[Lemma 6.5]{chen2023derivationboltzmann} in which new techniques are presented and the time integration domain $T(\mu)$ is explicitly calculated.
The set $D_{k}$ serves as a set of representatives for an equivalence class, whereas the index $\mu$ in \eqref{equ:km,f,I} is a specific representative chosen from a distinct equivalence class. This differs from the set $\lr{\mu}$ appearing in \eqref{equ:J,k,formula}, which contains $k!$ elements.

\subsection{Duhamel Expansion and Duhamel Tree}\label{sec:Duhamel Expansion and Duhamel Tree}
We initiate the analysis of the Duhamel expansion by constructing a Duhamel tree, abbreviated as a $D$-tree, and demonstrate the derivation of the Duhamel expansion $J_{\mu}^{(k+1)}$ from this tree. Initially, we introduce an algorithm for generating a $D$-tree based on a collapsing map $\mu$, and illustrate this process with an example. Following this, we establish a general algorithm to compute the Duhamel expansion. See also \cite{chen2022unconditional,CSZ22} for the Duhamel tree diagram representation for the NLS case.

\begin{algorithm}[Duhamel Tree] \label{algorithm:duhamel tree}
~\\

$(1)$ Let $D^{(1)}$ be a starting node in the $D$-tree, and $D^{(2)}$ be the middle child of $D^{(1)}$.
Set counter $j=2$.

$(2)$
Given $j$,
find the pair of indices $l$ and $r$ so that
$l\geq j+1$, $r\geq j+1$ and
\begin{align*}
\mu(l)=\mu(j),
\quad \mu(r)=j,
\end{align*}
and moreover $l$ and $r$ are the minimal indices for which the above equalities hold. Then place
$D^{(l)}$ or $D^{(r)}$ as the left or middle child of $D^{(j)}$ in the $D$-tree. If there is no such $l$ or $r$,
place $F_{\mu(j)}$ or $F_{j}$ as the left or right child of $D^{(j)}$ in the $D$-tree.

$(3)$ If $j=k+1$, then stop, otherwise set $j=j+1$ and go to step $(2)$.

\end{algorithm}

\begin{example}\label{ex:example}
Let us work with the following example.
$$
\begin{tabular}{c|cccc}
$j$&2&3&4&5\\
\hline
$\mu(j)$ &1&1&2&3\\
\end{tabular}
$$
\begin{minipage}{0.3\textwidth}
\centering
\begin{tikzpicture}
\node{$D^{(1)}$}[sibling distance=60pt,level distance=1.2cm]
child{node{$D^{(2)}$}
child{node{$D^{(3)}$}}
child{node{$D^{(4)}$}}
};
\end{tikzpicture}
\end{minipage}
\begin{minipage}{0.67\textwidth}
Let $D^{(1)}$ be a starting node, and $D^{(2)}$ be the middle child of $D^{(1)}$.
Now we start with the counter $j=2$, so we need to find the minimal $l\geq 3$, $r\geq 3$
such that $\mu(l)=1$, and $\mu(r)=2$. In this example, it is $l=3$ and
$r=4$ so we put $D^{(3)}$ as the left child and $D^{(4)}$ as the right child of $D^{(2)}$, in
the $D$-tree as shown in the left.
\end{minipage}

\begin{minipage}{0.67\textwidth}
Next, we move to the counter $j=3$. We find that $\mu(5)=3$ and there is no $l\geq 4$ such that $\mu(l)=\mu(3)=1$. Hence, we put $F_{1}$ as the left child and $D^{(5)}$ as the right child of $D^{(3)}$.
\end{minipage}
\begin{minipage}{0.3\textwidth}
\centering
\begin{tikzpicture}
\node{$D^{(1)}$}[sibling distance=60pt,level distance=1cm]
child{node{$D^{(2)}$}
child{node{$D^{(3)}$}
child{node{$F_{1}$}}
child{node{$D^{(5)}$}}}
child{node{$D^{(4)}$}}
}
;
\end{tikzpicture}
\end{minipage}

Finally, by iteratively applying the steps outlined in Algorithm \ref{algorithm:duhamel tree}, we construct the complete $D$-tree shown in the following diagram \eqref{figure:duhamel tree}.
\begin{figure}[htpb]
\caption{Duhamel Tree}
\label{figure:duhamel tree}
\begin{tikzpicture}
\node{$D^{(1)}$}[sibling distance=80pt,level distance=1cm]
child{node{$D^{(2)}$}
child{node{$D^{(3)}$}[sibling distance=40pt,level distance=1cm]
child{node{$F_{1}$}}
child{node{$D^{(5)}$}
child{node{$F_{3}$}}
child{node{$F_{5}$}}}}
child{node{$D^{(4)}$}[sibling distance=40pt,level distance=1cm]
child{node{$F_{2}$}}
child{node{$F_{4}$}}}
}
;
\end{tikzpicture}
\end{figure}

Now, we use the $D$-tree \eqref{figure:duhamel tree} to generate the Duhamel
 expansion $J_{\mu}^{(5)}(\wt{f}^{\otimes 5})$. On the one hand, by \eqref{equ:J,k,formula}, one can directly obtain
\begin{align}
J_{\mu}^{(5)}(\wt{f}^{\otimes 5})(t_{1},\underline{t}_{5})=
U_{1,2}^{(1)}\wt{Q}_{1,2}^{(2)}U_{2,3}^{(2)}\wt{Q}_{1,3}^{(3)}U_{3,4}^{(3)}\wt{Q}_{2,4}^{(4)}
U_{4,5}^{(4)}\wt{Q}^{(5)}_{3,5}\wt{f}^{\otimes 5}.
\end{align}
According to the $D$-tree \eqref{figure:duhamel tree}, we define
\begin{align*}
D^{(1)}=&U_{1}D^{(2)},\\
D^{(2)}=&U_{-2}(\wt{Q}(U_{2}D^{(3)},U_{2}D^{(4)})),\\
D^{(3)}=&U_{-3}(\wt{Q}(U_{3,5}\wt{f},U_{3}D^{(5)})),\\
D^{(4)}=&U_{-4}(\wt{Q}(U_{4,5}\wt{f},U_{4,5}\wt{f})),\\
D^{(5)}=&U_{-5}(\wt{Q}(\wt{f},\wt{f})),
\end{align*}
with the notation that $U_{-i}=U(-t_{i})$ and $U_{i,j}=U(t_{i}-t_{j})$.
Then by expanding $J_{\mu}^{(5)}(\wt{f}^{\otimes 5})$ and $D^{(1)}$, we actually have
\begin{align}
J_{\mu}^{(5)}(\wt{f}^{\otimes 5})=D^{(1)}.
\end{align}

\end{example}

Next, we present a more general algorithm for obtaining the Duhamel expansion from the $D$-tree.
\begin{algorithm}[From $D$-tree to Duhamel expansion]\label{algorithm:from d-tree to duhamel expansion}
~\\
\hspace*{1em}$(1)$ Set $F_{i}=U_{-k-1}\wt{f}$ for $i=1,2,...,k+1$, and
\begin{align*}
&D^{(k+1)}=U_{-k-1}(\wt{Q}(\wt{f},\wt{f})).
\end{align*}
 Set counter $j=k$.

$(2)$ Given $j$, set
\begin{align*}
&D^{(j)}=U_{-j}(\wt{Q}(U_{j}C_{r},U_{j}C_{l})),
\end{align*}
where $C_{l}$ is the left child and $C_{r}$ is the right child of $D^{(l)}$ in the $D$-tree.

$(3)$ Set $j=j-1$. If $j=1$, set
\begin{align*}
&D^{(1)}=U_{1}D^{(2)},
\end{align*}
and stop, otherwise go to step $(2)$.

\end{algorithm}

By employing Algorithm \ref{algorithm:from d-tree to duhamel expansion}, we derive the following proposition.
\begin{proposition}\label{prop:from d-tree to duhamel expansion}
There holds that
\begin{align}
J_{\mu}^{(k+1)}(\wt{f}^{\otimes (k+1)})(t_{1},\underline{t}_{k+1})=D^{(1)}(t_{1},\underline{t}_{k+1}).
\end{align}
\end{proposition}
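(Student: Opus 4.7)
The plan is to prove the identity by structural induction on the $D$-tree. A direct induction on $k$ alone is awkward because removing the deepest leaf $D^{(k+1)}$ from the tree does not yield a $D$-tree for a reduced $\mu$-map of size $k-1$; instead, I would first strengthen the proposition to a local claim: for every internal node $D^{(j)}$ with $2\leq j\leq k+1$, the value $D^{(j)}$ produced by Algorithm~\ref{algorithm:from d-tree to duhamel expansion} coincides with the ordered composition of $U$'s and $\wt{Q}$'s indexed by the labels appearing in the subtree rooted at $D^{(j)}$. The original statement then follows by applying this local claim at $j=2$ and composing with $U_1$ via $D^{(1)}=U_1 D^{(2)}$.

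For the base of the local induction I would treat subtrees whose children are both of the form $F_i$, the simplest instance being $D^{(k+1)}=U_{-(k+1)}\wt{Q}(\wt{f},\wt{f})$. Here the claim reduces to the observation that $U_j F_i = U_j U_{-(k+1)}\wt{f}=U_{j,k+1}\wt{f}$ by the semigroup law $U(a)U(b)=U(a+b)$, matching the tail propagator in \eqref{equ:J,k,formula}, together with the identification of $\wt{Q}$ applied to the two evaluated arguments with $\wt{Q}_{\mu(j),j}^{(j)}$ acting on the appropriate two tensor slots of $\wt{f}^{\otimes(k+1)}$. For the inductive step I would assume the local claim for both children $C_l,C_r$ of $D^{(j)}$ and use the definition $D^{(j)}=U_{-j}\wt{Q}(U_j C_r, U_j C_l)$. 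The outer factor $U_j$ in each argument combines with the leading $U_{-l}$ or $U_{-r}$ of the child (by induction hypothesis) via $U_j U_{-m}=U_{j,m}$, producing exactly the interstitial propagators $U_{j,l}^{(j)}$ and $U_{j,r}^{(j)}$ that appear in the Duhamel nesting \eqref{equ:J,k,formula}.

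The main obstacle is the combinatorial bookkeeping that matches the child-assignment rule of Algorithm~\ref{algorithm:duhamel tree} (left child is the minimal $l$ with $\mu(l)=\mu(j)$, right child is the minimal $r$ with $\mu(r)=j$) to the correct nested order of the operators $\wt{Q}_{\mu(j'),j'}^{(j')}$ in $J_\mu^{(k+1)}$. Two points must be checked carefully: first, that the union of the subtrees rooted at the children of $D^{(1)}$ contains each label $j\in\{2,\ldots,k+1\}$ exactly once, so that no collision is duplicated or omitted; this is a consequence of $\mu(j)<j$ and the minimality conditions in Algorithm~\ref{algorithm:duhamel tree}, which makes the assignment of each $j$ to a unique parent node well-defined. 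Second, that the left/right child convention correctly tracks which of the two particle lineages at the collision $\wt{Q}_{\mu(j),j}^{(j)}$ continues as the $\mu(j)$-slot and which as the $j$-slot in subsequent collisions. The factorization of the initial data as $\wt{f}^{\otimes(k+1)}$ is essential here, since it allows each slot to be traced as an independent scalar factor between collisions, reducing the verification to matching the ordered sequence of pairings dictated by $\mu$.

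Once the local claim is established at $j=2$, applying $U_1$ gives $D^{(1)}=U_{1,2}\wt{Q}(\,\cdots)$, and expanding fully reproduces the formula \eqref{equ:J,k,formula} term by term, completing the proof. The example worked out in Example~\ref{ex:example} serves as a useful sanity check throughout, since the explicit unfolding there exhibits every semigroup combination and every slot assignment required in the general argument.
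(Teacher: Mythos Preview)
The paper does not actually supply a proof of this proposition: it is simply stated immediately after Algorithm~\ref{algorithm:from d-tree to duhamel expansion}, with Example~\ref{ex:example} serving as the only justification. Your structural induction on the $D$-tree is the natural formalization of what the example illustrates, and the argument you outline is correct. In particular, your strengthening to a local claim at each internal node $D^{(j)}$ is necessary precisely because, as you observe, the subtree rooted at $D^{(j)}$ need not correspond to a contiguous block of indices (in the example, the subtree under $D^{(3)}$ carries labels $3$ and $5$ but skips $4$), so a naive induction on $k$ fails. The semigroup identity $U_jU_{-m}=U_{j,m}$ is exactly what converts the recursive definition $D^{(j)}=U_{-j}\wt{Q}(U_jC_l,U_jC_r)$ into the interstitial propagators $U^{(j)}_{j,j+1}$ of \eqref{equ:J,k,formula}, and the factorization $\wt{f}^{\otimes(k+1)}$ is what allows the many-body operators $U^{(j)}$, $\wt{Q}^{(j)}_{\mu(j),j}$ to be rewritten as nested one-particle operations.

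Two minor remarks. First, the statement in the paper reads $D^{(0)}$ but, as you implicitly recognize, the top of the tree is $D^{(1)}$; this is a typo. Second, the algorithm as printed has the children in the order $(C_r,C_l)$, while the example consistently uses $(C_l,C_r)$; since $\wt{Q}$ is nonsymmetric this matters, and the example's order is the correct one. Your proposal inherits this inconsistency from the paper but is otherwise sound.
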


\subsection{Iterative Estimates}\label{sec:Iterative Estimates}
In the section, we delve into the estimates of the Duhamel expansions.
We first provide a time-independent estimate, which is used to deal with the roughest term $D^{(k+1)}$.
\begin{lemma}\label{lemma:rough term}
Let $s\geq \frac{d}{4}$ and $r>\frac{d}{2}+\ga>0$. Then we have
\begin{align}\label{equ:rough term}
\n{\wt{Q}^{\pm}(\wt{f},\wt{g})}_{L_{x}^{2}H_{\xi}^{r}}\lesssim \n{\wt{f}}_{H_{x}^{s}H_{\xi}^{r}}\n{\wt{g}}_{H_{x}^{s}H_{\xi}^{r}}
\end{align}
\end{lemma}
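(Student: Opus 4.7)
The plan is to reduce the lemma to a pointwise-in-$x$ (``fiber'') weighted bound
$$\|\wt Q^{\pm}(\wt f,\wt g)(x,\cdot)\|_{H_{\xi}^{r}}\lesssim \|\wt f(x,\cdot)\|_{H_{\xi}^{r}}\,\|\wt g(x,\cdot)\|_{H_{\xi}^{r}},$$
and then integrate in $x$. Once this fiber estimate is in hand, taking $\|\cdot\|_{L_{x}^{2}}$ and applying H\"older with $\tfrac12=\tfrac14+\tfrac14$ reduces matters to controlling $\|\wt f\|_{L_{x}^{4}H_{\xi}^{r}}\|\wt g\|_{L_{x}^{4}H_{\xi}^{r}}$. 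The critical Sobolev embedding $H_{x}^{d/4}\hookrightarrow L_{x}^{4}$ (valid on $\R^{d}$ and on $\T^{d}$) then yields \eqref{equ:rough term} precisely under the assumption $s\geq d/4$.

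For the fiber estimate on the loss term, the Bobylev representation
$$\wt{Q}^{-}(\wt f,\wt g)(\xi)\sim \int \frac{\wt f(\xi-\eta)\wt g(\eta)}{|\eta|^{d+\gamma}}\,d\eta$$
is a convolution in $\xi$, so the Fourier multiplier $\lra{\nabla_{\xi}}^{r}$ commutes with the integral and can be placed entirely on $\wt f$. Applying Lemma \ref{lemma:Q-,bilinear estimate} pointwise in $x$ gives
$$\|\wt Q^{-}(\wt f,\wt g)(x,\cdot)\|_{H_{\xi}^{r}}\lesssim \|\wt f(x,\cdot)\|_{H_{\xi}^{r}}\,\|\wt g(x,\cdot)\|_{W_{\xi}^{r_{p},p}},$$
and the assumption $r>d/2+\gamma$ lets us choose $p>2$ close to $2$ and $r_{p}>d/p+\gamma$ so that the Sobolev embedding $H_{\xi}^{r}\hookrightarrow W_{\xi}^{r_{p},p}$ closes the fiber estimate. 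This is the easy case.

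For the gain term the derivative $\lra{\nabla_{\xi}}^{r}$ does not commute with $\wt Q^{+}$ because $\wt f$ and $\wt g$ are evaluated at the $\xi$-dependent points $\xi^{+}+\eta$ and $\xi^{-}-\eta$. The plan is to pass to the velocity side, where $\lra{\nabla_{\xi}}^{r}$ becomes the multiplier $\lra{v}^{r}$, and exploit the energy conservation $|v|^{2}+|u|^{2}=|v^{\ast}|^{2}+|u^{\ast}|^{2}$ (equivalently $|\xi|^{2}=|\xi^{+}|^{2}+|\xi^{-}|^{2}$) in the pointwise form $\lra{v}^{r}\lesssim \lra{v^{\ast}}^{r}+\lra{u^{\ast}}^{r}$ to obtain
$$\lra{v}^{r}|Q^{+}(f,g)(v)|\lesssim Q^{+}(\lra{v}^{r}|f|,|g|)(v)+Q^{+}(|f|,\lra{v}^{r}|g|)(v).$$
Returning to Fourier and applying the symmetric Lemma \ref{lemma:Q+,bilinear estimate} to each piece, one factor appears in $L_{\xi}^{2}$ (which by Plancherel is $\|\lra{v}^{r}f\|_{L_{v}^{2}}=\|\wt f\|_{H_{\xi}^{r}}$), while the other appears as $\|\widehat{|g|}\|_{W_{\xi}^{r_{p},p}}$. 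This is exactly the main obstacle: one has to convert a $W_{\xi}^{r_{p},p}$ norm of the Fourier transform of the modulus $|g|$ back into a bound on $\wt g$. This is done via Hausdorff--Young for $p\geq 2$, giving $\|\widehat{|g|}\|_{W_{\xi}^{r_{p},p}}\lesssim \|\lra{v}^{r_{p}}g\|_{L_{v}^{p'}}$, followed by a weighted H\"older $\|\lra{v}^{r_{p}}g\|_{L_{v}^{p'}}\lesssim \|\lra{v}^{r_{p}+\sigma}g\|_{L_{v}^{2}}$ for any $\sigma>d/2-d/p$, provided by the integrability of $\lra{v}^{-\sigma}$. The condition $r>d/2+\gamma$ guarantees that one can choose $p>2$, $r_{p}$ slightly above $d/p+\gamma$, and $\sigma$ slightly above $d/2-d/p$ with $r_{p}+\sigma<r$, closing the fiber estimate for the gain term. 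Combining this with the $L_{x}^{2}\hookleftarrow L_{x}^{4}\cdot L_{x}^{4}$ step above completes the proof.
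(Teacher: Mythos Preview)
Your loss-term argument is essentially the paper's: both commute $\lra{\nabla_\xi}^r$ through the convolution onto $\wt f$, invoke Lemma~\ref{lemma:Q-,bilinear estimate}, and finish with H\"older plus $H_x^{d/4}\hookrightarrow L_x^4$. The only cosmetic difference is that the paper takes $p=2$ directly (so the $W_\xi^{r_p,p}$ norm is already an $H_\xi^{r_p}$ norm), whereas you take $p$ slightly above $2$ and close with the embedding $H_\xi^r\hookrightarrow W_\xi^{r_p,p}$.

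For the gain term your route is genuinely different and still correct. The paper argues by duality and a Littlewood--Paley decomposition in $\xi$: energy conservation enters as the frequency constraint $M_3\lesssim\max(M_1,M_2)$ (the observation \eqref{equ:property,constraint,projector,v,variable}), which lets one place the high-$\xi$-frequency factor in $L_\xi^2$, the other in $W_\xi^{r_p,2}$ via Lemma~\ref{lemma:Q+,bilinear estimate} with $p=2$, and sum using the gain $M_3^r/M_1^r$. You instead work on the velocity side and use energy conservation pointwise as $\lra{v}^r\lesssim\lra{v^\ast}^r+\lra{u^\ast}^r$, which distributes the weight without any frequency decomposition; the price is that the factors now carry absolute values, and recovering $\|\wt g\|_{H_\xi^r}$ from $\|\mathcal{F}^{-1}[|g|]\|_{W_\xi^{r_p,p}}$ requires the Hausdorff--Young plus weighted-H\"older step (this is where $p\ge 2$ is needed). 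Both approaches exploit the same conservation law; yours is more elementary and self-contained, while the paper's dyadic argument stays entirely on the Fourier side and is consistent with the machinery used throughout Section~\ref{sec:Bilinear Estimates}.
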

\begin{proof}
For the loss term, we use Lemma \ref{lemma:Q-,bilinear estimate} with $p=2$, H\"{o}lder inequality, and Sobolev inequality to get
\begin{align}
\n{\wt{Q}^{-}(\wt{f},\wt{g})}_{L_{x}^{2}H_{\xi}^{r}}\lesssim \n{\lra{\nabla_{\xi}}^{r}\wt{f}}_{L_{x}^{4}L_{\xi}^{2}}\n{\wt{g}}_{L_{x}^{4}H_{\xi}^{r}}\lesssim \n{\wt{f}}_{H_{x}^{s}H_{\xi}^{r}}\n{\wt{g}}_{H_{x}^{s}H_{\xi}^{r}}
\end{align}
provided that $s\geq \frac{d}{4}$ and $r>\frac{d}{2}+\ga>0$.

For the gain term, by duality,
\eqref{equ:rough term} is equivalent to
\begin{align}\label{equ:Q+,rough term,equivalent}
\int \wt{Q}^{+}(\wt{f},\wt{g}) h dx d\xi \lesssim
\n{\wt{f}}_{H_{x}^{s}H_{\xi}^{r}}\n{\wt{g}}_{H_{x}^{s}H_{\xi}^{r}} \n{h}_{L_{x}^{2}H_{\xi}^{-r}} .
\end{align}
Following the frequency analysis as in Section \ref{sec:Bilinear Estimates for the Gain Term}, we
insert a Littlewood-Paley decomposition to get
\begin{align*}
\eqref{equ:Q+,rough term,equivalent}=\sum_{M_{1},M_{2},M_{3}} \int \wt{Q}^{+}(P_{M_{1}}^{\xi}\wt{f},P_{M_{2}}^{\xi}\wt{g})P_{M_{3}}^{\xi}h dx d\xi.
\end{align*}
It suffices to consider Case $A$: $M_{1}\geq M_{2}$, $M_{1}\geq M_{3}$, as other cases follow similarly.
Let $I_{A}$ denote the integral restricted to the Case $A$.

By Cauchy-Schwarz inequality and estimate \eqref{equ:Q+,bilinear estimate,PM,f,g} in Lemma \ref{lemma:Q+,bilinear estimate}, we have
\begin{align*}
I_{A}\lesssim&
\sum_{ M_{1}\geq M_{2},M_{1}\geq M_{3}}
 \n{\wt{Q}^{+}(P_{M_{1}}^{\xi}\wt{f},P_{M_{2}}^{\xi}\wt{g})}_{L_{x,\xi}^{2}}
\n{P_{M_3}^{\xi}h}_{L_{x,\xi}^{2}}\\
\lesssim&\sum_{ M_{1}\geq M_{2},M_{1}\geq M_{3}}
 \n{P_{M_{1}}^{\xi}\wt{f}}_{L_{x}^{4}L_{\xi}^{2}}\n{P_{M_{2}}^{\xi}\wt{g}}_{L_{x}^{4}W_{\xi}^{r_{p},p}}
\n{P_{M_3}^{\xi}h}_{L_{x,\xi}^{2}}.
\end{align*}
Choosing $p=2$ and carrying out the summation in $M_{2}$, we get
\begin{align*}
I_{A}\lesssim \sum_{M_{1}\geq M_{3}}
 \n{P_{M_{1}}^{\xi}\wt{f}}_{L_{x}^{4}L_{\xi}^{2}}\n{\wt{g}}_{L_{x}^{4}H_{\xi}^{r}}
\n{P_{M_3}^{\xi}h}_{L_{x,\xi}^{2}}.
\end{align*}
By Bernstein inequality, Cauchy-Schwarz inequality, Minkowski inequality, and Sobolev inequality, we arrive at
\begin{align*}
I_{A}\lesssim& \sum_{M_{1}\geq M_{3}} \frac{M_{3}^{r}}{M_{1}^{r}}
 \n{P_{M_{1}}^{\xi}\wt{f}}_{L_{x}^{4}H_{\xi}^{r}}\n{\wt{g}}_{L_{x}^{4}H_{\xi}^{r}}
\n{P_{M_3}^{\xi}h}_{L_{x}^{2}H_{\xi}^{-r}}\\
\lesssim& \n{\wt{f}}_{L_{x}^{4}H_{\xi}^{r}}\n{\wt{g}}_{L_{x}^{4}H_{\xi}^{r}}\n{h}_{L_{x}^{2}H_{\xi}^{-r}}\\
\lesssim& \n{\wt{f}}_{H_{x}^{s}H_{\xi}^{r}}\n{\wt{g}}_{H_{x}^{s}H_{\xi}^{r}} \n{h}_{L_{x}^{2}H_{\xi}^{-r}},
\end{align*}
which completes the proof of \eqref{equ:Q+,rough term,equivalent}.
\end{proof}

Now, we get into the analysis of iterative estimates.
\begin{lemma}\label{lemma:iterative estimate}
Let $t_{1}\in [0,T]$ and $\wt{f}^{(k)}(t)=\int_{\mathbb{B}} \wt{f}^{\otimes k}d\nu_{t}(\wt{f})$. Then we have
\begin{align}\label{equ:iterative estimates}
\n{I_{\mu}^{(k+1)}(\wt{f}^{(k+1)}(t_{1}))}_{L_{T}^{\infty}L_{x_{1},\xi_{1}}^{2}}^{2}\leq (CT^{\frac{1}{2}})^{k-1}\int_{0}^{T}\int_{\mathbb{B}} \n{\wt{f}}_{L_{T}^{\infty}H_{x}^{s}H_{\xi}^{r}}^{k+1}d|\nu_{t_{k+1}}|(\wt{f})dt_{k+1}.
\end{align}
\end{lemma}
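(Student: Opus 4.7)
The plan is to combine the Duhamel-tree representation of $I_\mu^{(k+1)}$ with iterative applications of the bilinear estimates in Proposition \ref{lemma:bilinear estimates,loss,gain}, terminated by one application of the rough estimate in Lemma \ref{lemma:rough term}. By Proposition \ref{prop:from d-tree to duhamel expansion}, $J_\mu^{(k+1)}(\tilde f^{\otimes(k+1)})=D^{(1)}$, and by the admissibility $\tilde f^{(k+1)}(t_{k+1})=\int_{\mathbb B}\tilde f^{\otimes(k+1)}\,d\nu_{t_{k+1}}(\tilde f)$ together with linearity of $J_\mu^{(k+1)}$ in its argument,
\begin{align*}
I_\mu^{(k+1)}(\tilde f^{(k+1)})(t_1)=\int_{\mathbb B}\int_{T(\mu)}D^{(1)}(t_1,\underline t_{k+1};\tilde f)\,d\underline t_{k+1}\,d\nu_{t_{k+1}}(\tilde f).
\end{align*}

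For each fixed $\tilde f\in\mathbb B$, I would estimate $D^{(1)}$ by descending the $D$-tree. Identify the unique path from the root $D^{(1)}$ to the bottom node $D^{(k+1)}=U_{-k-1}(\tilde Q(\tilde f,\tilde f))$. The unitarity of $U(t_1)$ reduces $\|D^{(1)}\|_{L^\infty_{t_1}L^2_{x,\xi}}$ to $\|D^{(2)}\|_{L^\infty_{t_1}L^2_{x,\xi}}$. At each on-path binary internal node among $D^{(2)},\ldots,D^{(k)}$, apply Proposition \ref{lemma:bilinear estimates,loss,gain} with $s_1=0$, using the symmetric pair of estimates \eqref{equ:Q,bilinear estimate,Hs,1}--\eqref{equ:Q,bilinear estimate,Hs,2} to accommodate the left/right orientation of the on-path child. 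Each such application contributes a factor $CT^{1/2}$ from the $L^1$-in-time bound and passes the $L^2 H^r$ norm down along the path while bounding the off-path child in $H^s H^r$. Off-path subtrees, whose leaves are all copies of $\tilde f$, are bounded in $H^s H^r$ by recursively iterating Proposition \ref{lemma:bilinear estimates,loss,gain} in closed form $s_1=s$. At the terminus $D^{(k+1)}$ apply Lemma \ref{lemma:rough term} to obtain the time-independent bound $\|\tilde Q(\tilde f,\tilde f)\|_{L^2_x H^r_\xi}\lesssim\|\tilde f\|_{H^s_x H^r_\xi}^2$. A leaf count shows there are exactly $k-1$ bilinear applications in total (one per binary internal node above $D^{(k+1)}$) and $k+1$ leaves, producing the pointwise bound
\begin{align*}
\Big\|\int_{T(\mu)\cap\{t_{k+1}\text{ fixed}\}}D^{(1)}(t_1,\underline t_{k+1};\tilde f)\,dt_2\cdots dt_k\Big\|_{L^\infty_{t_1}L^2_{x_1,\xi_1}}\lesssim (CT^{1/2})^{k-1}\|\tilde f\|_{L^\infty_T H^s_x H^r_\xi}^{k+1}.
\end{align*}

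Substituting this pointwise bound into the first display, taking $L^\infty$ in $t_1$ and squaring, then applying Cauchy--Schwarz in $t_{k+1}$ and against the signed measure $\nu_{t_{k+1}}$ (whose total variation is uniformly bounded, for instance by $2$ for $\nu_t=\delta_{\tilde f_1(t)}-\delta_{\tilde f_2(t)}$), and using the support condition $\|\tilde f\|_{L^\infty_T H^s H^r}\leq C$ on $\mathbb B$ to absorb one copy of $\|\tilde f\|^{k+1}$, produces the stated inequality. The main obstacle lies in the $D$-tree bookkeeping in the middle step: one must verify that along the chosen root-to-$D^{(k+1)}$ path each bilinear application operates with consistent weak/strong norms on its two children --- this is where the symmetric pair \eqref{equ:Q,bilinear estimate,Hs,1}--\eqref{equ:Q,bilinear estimate,Hs,2} is essential --- and that every off-path subtree closes under iterated closed-form bilinear estimates. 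The squared form of the left-hand side, paired with the single power $k+1$ of $\|\tilde f\|$ on the right, additionally requires careful attention in the final Cauchy--Schwarz step against $d\nu_{t_{k+1}}\,dt_{k+1}$.
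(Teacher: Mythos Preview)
Your approach is essentially identical to the paper's: the paper likewise represents $J_\mu^{(k+1)}$ via the Duhamel tree, applies Minkowski to bring the $L^2_{x,\xi}$ norm inside, and then iterates down the tree using Proposition \ref{lemma:bilinear estimates,loss,gain} with $s_1=0$ at each node on the path to $D^{(k+1)}$ (cases (i)/(ii)) and $s_1=s$ at off-path nodes (case (iii)), terminating with Lemma \ref{lemma:rough term} at $D^{(k+1)}$. Two small remarks: in your first display the order $\int_{\mathbb B}\int_{T(\mu)}\cdots\,d\underline t_{k+1}\,d\nu_{t_{k+1}}$ is formally ill-posed since $\nu_{t_{k+1}}$ depends on the inner variable $t_{k+1}$ --- the paper writes it as $\int_{T(\mu)}\int_{\mathbb B}\cdots\,d\nu_{t_{k+1}}\,d\underline t_{k+1}$ and then applies Minkowski; and your concern about the squared left-hand side is well-founded --- the paper's own proof never squares and its application in Section \ref{sec:Proof of the Main Theorem} uses the unsquared version, so the exponent $2$ in \eqref{equ:iterative estimates} appears to be a typo.
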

\begin{proof}
We begin by considering Example \ref{ex:example}. We shall demonstrate how to iteratively apply the space-time bilinear estimates which were established in Section \ref{sec:Bilinear Estimates}, to arrive at \eqref{equ:iterative estimates}.
Noticing that
\begin{align*}
I_{\mu}^{(5)}(\wt{f}^{(5)})(t_{1})=\int_{T(\mu)}\int J_{\mu}^{(5)}(\wt{f}^{\otimes 5})(t_{1},\underline{t}_{5})
d\nu_{t_{5}}(\wt{f})d\underline{t}_{5},
\end{align*}
we put $I_{\mu}^{(k+1)}(\wt{f}^{(k+1)})$ in the $L_{x}^{2}L_{\xi}^{2}$ norm and apply Minkowski inequality to get
\begin{align*}
I:=\n{I_{\mu}^{(5)}(\wt{f}^{(5)})(t_{1})}_{L_{x_{1},\xi_{1}}^{2}}\leq &
\int_{[0,T]^{4}}\int \n{J_{\mu}^{(5)}(\wt{f}^{\otimes 5})(t_{1},\underline{t}_{5})}_{L_{x,\xi}^{2}}
d|\nu_{t_{5}}|(\wt{f})d\underline{t}_{5}\\
=&\int_{[0,T]^{4}}\int \n{D^{(1)}(t_{1},\underline{t}_{5})}_{L_{x,\xi}^{2}}
d|\nu_{t_{5}}|(\wt{f})d\underline{t}_{5}\\
=&\int_{[0,T]^{4}}\int \n{D^{(2)}(\underline{t}_{5})}_{L_{x,\xi}^{2}}
d|\nu_{t_{5}}|(\wt{f})d\underline{t}_{5},
\end{align*}
where in the last equality we have used that
 $D^{(1)}(t_{1},\underline{t}_{5})=U(t_{1})D^{(2)}(\underline{t}_{5})$ and the unitary property of $U(t)$.

On the one hand, we have that $D^{(2)}=U_{-2}(\wt{Q}(U_{2}D^{(3)},U_{2}D^{(4)}))$. On the other hand, we note that, from the Duhamel tree
diagram \ref{figure:duhamel tree}, the roughest term $D^{(5)}$ is the offspring of $D^{(3)}$. Therefore, carrying out the
$dt_{2}$ integral, we apply the space-time bilinear estimate \eqref{equ:Q,bilinear estimate,Hs,1} with $s_{1}=0$ to reach
\begin{align*}
&\int_{[0,T]^{4}}\int \n{D^{(2)}(\underline{t}_{5})}_{L_{x,\xi}^{2}}
d|\nu_{t_{5}}|(\wt{f})d\underline{t}_{5}\notag\\
\leq& CT^{\frac{1}{2}}\int_{\mathbb{B}} \int_{[0,T]^{3}} \n{D^{(3)}(t_{3},t_{5})}_{L_{x}^{2}H_{\xi}^{r}}\n{D^{(4)}(t_{4},t_{5})}_{H_{x}^{s}H_{\xi}^{r}}
d|\nu_{t_{5}}|(\wt{f})dt_{3}dt_{4}dt_{5}.
\end{align*}

Due to that $D^{(3)}(t_{3},t_{5})=U_{-3}(\wt{Q}(U_{3,5}\wt{f},U_{3}D^{(5)}))$, carrying out the $dt_{3}$ integral, we apply the space-time bilinear estimate \eqref{equ:Q,bilinear estimate,Hs,2} with $s_{1}=0$ to get
\begin{align*}
I\leq& (CT^{\frac{1}{2}})^{2}\int_{\mathbb{B}} \int_{[0,T]^{2}} \n{U_{-5}\wt{f}}_{H_{x}^{s}H_{\xi}^{r}}\n{D^{(5)}(t_{5})}_{L_{x}^{2}H_{\xi}^{r}}\n{D^{(4)}(t_{4},t_{5})}_{H_{x}^{s}H_{\xi}^{r}}
d|\nu_{t_{5}}|(\wt{f})dt_{4}dt_{5}.
\end{align*}

Due to that $D^{(4)}(t_{4},t_{5})=U_{-4}(\wt{Q}(U_{4,5}\wt{f},U_{4,5}\wt{f}))$, carrying out the $dt_{4}$ integral, we apply the space-time bilinear estimate \eqref{equ:Q,bilinear estimate,Hs,2} with $s_{1}=s$ to get
\begin{align*}
I\leq& (CT^{\frac{1}{2}})^{3}\int_{\mathbb{B}} \int_{[0,T]} \n{U_{-5}\wt{f}}_{H_{x}^{s}H_{\xi}^{r}}^{3}\n{D^{(5)}(t_{5})}_{L_{x}^{2}H_{\xi}^{r}}
d|\nu_{t_{5}}|(\wt{f})dt_{5}.
\end{align*}
Finally, applying the time-independent bilinear estimate \eqref{equ:rough term} in Lemma \ref{lemma:rough term}, we arrive at
\begin{align*}
I\leq (CT^{\frac{1}{2}})^{3}\int_{\mathbb{B}} \int_{[0,T]}  \n{\wt{f}}_{L_{T}^{\infty}H_{x}^{s}H_{\xi}^{r}}^{5}d|\nu_{t_{5}}|(\wt{f})dt_{5},
\end{align*}
which yields the desired estimate \eqref{equ:iterative estimates} for this example.

We can now present the algorithm to prove the general case.
\begin{algorithm}

We first write out
\begin{align}
I_{\mu}^{(k+1)}(\wt{f}^{(k+1)})(t_{1})=\int_{T(\mu)}\int_{\mathbb{B}} J_{\mu}^{(k+1)}(\wt{f}^{\otimes (k+1)})(t_{1},\underline{t}_{k+1})
d\nu_{t_{k+1}}(\wt{f})d\underline{t}_{k+1}.
\end{align}

Step 1. Put $I_{\mu}^{(k+1)}(\wt{f}^{(k+1)})$ in the $L_{x}^{2}L_{\xi}^{2}$ norm. By Minkowski inequality and Proposition \ref{prop:from d-tree to duhamel expansion}, we have
\begin{align*}
&\n{I_{\mu}^{(k+1)}(\wt{f}^{(k+1)}(t_{1}))}_{L_{x_{1},\xi_{1}}^{2}}^{2}\\
\leq &
\int_{[0,T]^{k}}\int_{\mathbb{B}}  \n{J_{\mu}^{(k+1)}(\wt{f}^{\otimes (k+1)})(t_{1},\underline{t}_{k+1})}_{L_{x,\xi}^{2}}
d|\nu_{t_{k+1}}|(\wt{f})d\underline{t}_{k+1}\\
=&\int_{[0,T]^{k}}\int_{\mathbb{B}}  \n{D^{(1)}(t_{1},\underline{t}_{k+1})}_{L_{x,\xi}^{2}}
d|\nu_{t_{k+1}}|(\wt{f})d\underline{t}_{k+1}\\
=&\int_{[0,T]^{k}}\int_{\mathbb{B}}  \n{D^{(2)}(\underline{t}_{k+1})}_{L_{x,\xi}^{2}}
d|\nu_{t_{k+1}}|(\wt{f})d\underline{t}_{k+1}
\end{align*}
where in the last equality we have used that $D^{(1)}=U_{1}D^{(2)}$.
Set the counter $j=2$.

Step 2. Noticing that $D^{(j)}=U_{-j}\wt{Q}(U_{j}C_{l},U_{j}C_{r})$,
we use the bilinear estimates, according to the position of the roughest term $D^{(k+1)}$ as shown in the Duhamel tree.
\begin{enumerate}[$(i)$]
\item
If $D^{(k+1)}$ is the offspring of the left branch of $D^{(j)}$, we apply the bilinear estimate \eqref{equ:Q,bilinear estimate,Hs,1} with
 $s_{1}=0$ to obtain
\begin{align*}
\int_{[0,T]}\n{D^{(j)}}_{L_{x}^{2}H_{\xi}^{r}}dt_{j}\leq CT^{\frac{1}{2}}\n{C_{l}}_{L_{x}^{2}H_{\xi}^{r}}
\n{C_{r}}_{H_{x}^{s}H_{\xi}^{r}}.
\end{align*}
\item
If $D^{(k+1)}$ is the offspring of the right branch of $D^{(j)}$, we apply the bilinear estimate \eqref{equ:Q,bilinear estimate,Hs,2}
with $s_{1}=0$ to obtain
\begin{align*}
\int_{[0,T]}\n{D^{(j)}}_{L_{x}^{2}H_{\xi}^{r}}dt_{j}\leq CT^{\frac{1}{2}}
\n{C_{l}}_{H_{x}^{s}H_{\xi}^{r}}\n{C_{r}}_{L_{x}^{2}H_{\xi}^{r}}.
\end{align*}
\item
If $D^{(k+1)}$ is neither the left nor the right offspring of $D^{(j)}$, we apply the bilinear estimate \eqref{equ:Q,bilinear estimate,Hs,1} or
\eqref{equ:Q,bilinear estimate,Hs,2} with $s_{1}=s$ to obtain
\begin{align*}
\int_{[0,T]}\n{D^{(j)}}_{H_{x}^{s}H_{\xi}^{r}}dt_{j}\leq CT^{\frac{1}{2}}
\n{C_{l}}_{H_{x}^{s}H_{\xi}^{r}}\n{C_{r}}_{H_{x}^{s}H_{\xi}^{r}}.
\end{align*}
\end{enumerate}
\end{algorithm}

Step 3. Set $j=j+1$. If $j\leq k$, go to Step 2. If $j=k+1$, we apply time-independent bilinear estimates \eqref{equ:rough term} to deal with $D^{(k+1)}$ and obtain
\begin{align*}
\n{D^{(k+1)}}_{L_{x}^{2}H_{\xi}^{r}}=\n{U(t_{k+1})\wt{Q}(\wt{f},\wt{f})}_{L_{x}^{2}H_{\xi}^{r}}
=\n{\wt{Q}(\wt{f},\wt{f})}_{L_{x}^{2}H_{\xi}^{r}}\leq C\n{\wt{f}}_{L_{T}^{\infty}H_{x}^{s}H_{\xi}^{r}}^{2}.
\end{align*}
Now, we have completed the iterative estimate part, in which the space-time bilinear estimates are used $(k-1)$
 times in total, and hence arrive at \eqref{equ:iterative estimates}.
\end{proof}

\subsection{Proof of the Main Theorem}\label{sec:Proof of the Main Theorem}

\begin{proof}[\textbf{Proof of Theorems $\ref{thm:main theorem}$ and $\ref{thm:uniqueness hierarchy}$}]
We focus on the proof of Theorem \ref{thm:main theorem}, as it also applies to Theorem $\ref{thm:uniqueness hierarchy}$.
Let $f_{1}$ and $f_{2}$ be two $C([0,T];H_{x}^{s}L_{v}^{2,r})$
solutions to the Boltzmann equation \eqref{equ:Boltzmann} with the same initial datum $f_{0}$. We work in the
Fourier side, and set
$$\wt{f}(t)=\wt{f}_{1}(t)-\wt{f}_{2}(t),$$
which yields
\begin{align}
\wt{f}^{(k)}(t)=\int \wt{f}^{\otimes k}d\nu_{t}(\wt{f}),
\end{align}
where $\nu_{t}(\wt{f})=\delta_{\wt{f}_{1}(t)}(\wt{f})-\delta_{\wt{f}_{2}(t)}(\wt{f})$. Then by Lemma \ref{lemma:KM board game}, we rewrite
\begin{align*}
\wt{f}(t_{1})=\sum_{\mu\in D_{k}}I_{\mu}^{(k+1)}(\wt{f}^{(k+1)})(t_{1}),
\end{align*}
where there are at most $4^{k}$ terms inside the set $D_{k}$. By iterative estimates in Lemma \ref{lemma:iterative estimate}, we have
\begin{align*}
\n{\wt{f}(t_{1})}_{L_{T}^{\infty}L_{x,\xi}^{2}}\leq& \sum_{\mu\in D_{k}}\n{I_{\mu}^{(k+1)}(\wt{f}^{(k+1)})}_{L_{T}^{\infty}L_{x,\xi}^{2}}\\
\leq& \sum_{\mu\in D_{k}}(CT^{\frac{1}{2}})^{k-1}\int_{0}^{T}\int_{\mathbb{B}} \n{\wt{f}}_{L_{t}^{\infty}H_{x}^{s}H_{\xi}^{r}}^{k+1}d|\nu_{t_{k+1}}|(\wt{f})dt_{k+1}\\
\leq& \lrs{4CC_{0}T^{\frac{1}{2}}}^{k},
\end{align*}
where $C_{0}=\max\lr{\n{\wt{f}_{1}}_{L_{T_{0}}^{\infty}H_{x}^{s}H_{\xi}^{r}},\n{\wt{f}_{2}}_{L_{T_{0}}^{\infty}H_{x}^{s}H_{\xi}^{r}}}$.
Choosing $T$ small enough such that $4CC_{0}T^{\frac{1}{2}}\leq \frac{1}{2}$, we have
\begin{align}
\n{\wt{f}(t_{1})}_{L_{T}^{\infty}L_{x,\xi}^{2}}\leq \lrs{\frac{1}{2}}^{k}\to 0, \quad \text{as $k\to \infty$.}
\end{align}
Therefore, we have concluded that $\wt{f}(t)=0$ for $t\in [0,T]$. By a bootstrap argument, we can then fill the whole $[0,T_{0}]$ interval.
\end{proof}

\noindent \textbf{Acknowledgements}
We would like to express our sincere gratitude to the referee for the careful checking of the manuscript, the insightful comments and helpful suggestions.
X. Chen was supported in part by NSF DMS-2406620,
S. Shen was supported in part by the National Key R\&D Program of China under Grant 2024YFA1015500, NSF of China under Grant 12501322, and Anhui Provincial NSF 2508085QA001.
Z. Zhang was supported in part by National Key R\&D Program of China under Grant 2023YFA1008801 and  NSF of China under Grant 12288101.

\noindent\textbf{Data Availability Statement}
Data sharing is not applicable to this article as no datasets were generated or analysed during the current study.

\noindent\textbf{Conflict of Interest}
The authors declare that they have no conflict of interest.


\end{document}